\newif\ifpersonal
\newcommand*{\personal}[1]{\textcolor[rgb]{0,0,1}{(Personal: #1)}}
\newcommand*{\todo}[1]{\textcolor{red}{(Todo: #1)}}
\newcommand*{\personal}[1]{\ignorespaces}
\newcommand*{\todo}[1]{\ignorespaces}
\newcommand*{\boldone}{\text{\usefont{U}{bbold}{m}{n}1}}
\newcommand{\yo}{\text{\usefont{U}{min}{m}{n}\symbol{'110}}}
\DeclareFontFamily{U}{min}{}
\DeclareFontShape{U}{min}{m}{n}{<-> dmjhira}{}
\tikzset{curve/.style={settings={#1},to path={(\tikztostart)
			.. controls ($(\tikztostart)!\pv{pos}!(\tikztotarget)!\pv{height}!270:(\tikztotarget)$)
			and ($(\tikztostart)!1-\pv{pos}!(\tikztotarget)!\pv{height}!270:(\tikztotarget)$)
			.. (\tikztotarget)\tikztonodes}},
	settings/.code={\tikzset{quiver/.cd,#1}
		\def\pv##1{\pgfkeysvalueof{/tikz/quiver/##1}}},
	quiver/.cd,pos/.initial=0.35,height/.initial=0}
\tikzset{tail reversed/.code={\pgfsetarrowsstart{tikzcd to}}}
\tikzset{2tail/.code={\pgfsetarrowsstart{Implies[reversed]}}}
\tikzset{2tail reversed/.code={\pgfsetarrowsstart{Implies}}}
\tikzset{Rightarrow/.style={double equal sign distance,>={Implies},->},
triple/.style={-,preaction={draw,Rightarrow}},
quadruple/.style={preaction={draw,Rightarrow,shorten >=0pt},shorten >=1pt,-,double,double
distance=0.2pt}}
\newcommand{\mylabel}[2]{#2\def\@currentlabel{#2}\label{#1}}
\newcommand{\bDelta}{\bm{\Delta}}
\newcommand{\twoarrows}[3][0.2ex]{%
  \mathrel{\mathpalette\twoarrows@{{#1}{#2}{#3}}}%
}
\newcommand{\twoarrows@}[2]{\twoarrows@@#1#2}
\newcommand{\twoarrows@@}[4]{%
  \vcenter{\offinterlineskip\m@th
    \ialign{\hfil##\hfil\cr
      $#1#3$\cr
      \noalign{\vskip#2}
      $#1#4$\cr
    }%
  }%
}
\newcommand*{\doublerightarrow}[2]{\mathrel{
		\settowidth{\@tempdima}{$\scriptstyle#1$}
		\settowidth{\@tempdimb}{$\scriptstyle#2$}
		\ifdim\@tempdimb>\@tempdima \@tempdima=\@tempdimb\fi
		\mathop{\vcenter{
				\offinterlineskip\ialign{\hbox to\dimexpr\@tempdima+1em{##}\cr
					\rightarrowfill\cr\noalign{\kern.5ex}
					\rightarrowfill\cr}}}\limits^{\!#1}_{\!#2}}}
\newcommand*{\triplerightarrow}[1]{\mathrel{
		\settowidth{\@tempdima}{$\scriptstyle#1$}
		\mathop{\vcenter{
				\offinterlineskip\ialign{\hbox to\dimexpr\@tempdima+1em{##}\cr
					\rightarrowfill\cr\noalign{\kern.5ex}
					\rightarrowfill\cr\noalign{\kern.5ex}
					\rightarrowfill\cr}}}\limits^{\!#1}}}
\def\@tocline#1#2#3#4#5#6#7{\relax
	\ifnum #1>\c@tocdepth 
	\else
	\par \addpenalty\@secpenalty\addvspace{#2}%
	\begingroup \hyphenpenalty\@M
	\@ifempty{#4}{%
		\@tempdima\csname r@tocindent\number#1\endcsname\relax
	}{%
		\@tempdima#4\relax
	}%
	\parindent\z@ \leftskip#3\relax \advance\leftskip\@tempdima\relax
	\rightskip\@pnumwidth plus4em \parfillskip-\@pnumwidth
	#5\leavevmode\hskip-\@tempdima
	\ifcase #1
	\or\or \hskip 1em \or \hskip 2em \else \hskip 3em \fi%
	#6\nobreak\relax
	\dotfill\hbox to\@pnumwidth{\@tocpagenum{#7}}\par
	\nobreak
	\endgroup
	\fi}
\newlist{defenum}{enumerate}{1} 
\setlist[defenum]{label=$(\arabic*)$, ref=\thedefn.$(\arabic*)$}
\newlist{lemenum}{enumerate}{1} 
\setlist[lemenum]{label=$(\arabic*)$, ref=\thelemma.$(\arabic*)$}
\newlist{assumpenum}{enumerate}{1} 
\setlist[assumpenum]{label=$(\arabic*)$, ref=\theassumption.$(\arabic*)$}
\newlist{propenum}{enumerate}{1} 
\setlist[propenum]{label=$(\arabic*)$, ref=\theprop.$(\arabic*)$}
\newlist{exmpenum}{enumerate}{1} 
\setlist[exmpenum]{label=$(\arabic*)$), ref=\theexample.$(\arabic*)$}
\newlist{thmenum}{enumerate}{1} 
\setlist[thmenum]{label=$(\arabic*)$, ref=\thetheorem.$(\arabic*)$}
\newlist{intrenum}{enumerate}{1}
\setlist[intrenum]{label=$(\arabic*)$,ref=$(\arabic*)$}
\newlist{corenum}{enumerate}{1} 
\setlist[corenum]{label=$(\arabic*)$, ref=\thecorollary.$(\arabic*)$}
\newlist{factenum}{enumerate}{1} 
\setlist[factenum]{label=$(\arabic*)$, ref=\thefact.$(\arabic*)$}
\newlist{remarkenum}{enumerate}{1} 
\setlist[remarkenum]{label=$(\arabic*)$, ref=\theremark.$(\arabic*)$}
\newlist{axiomenum}{enumerate}{1} 
\setlist[axiomenum]{label=$(\arabic*)$, ref=\theaxiom.$(\arabic*)$}
\newlist{questenum}{enumerate}{1} 
\setlist[questenum]{label=$(\arabic*)$, ref=\thequestion.$(\arabic*)$}
\newenvironment{customthm}[1]
{\innercustomthm}
  {\endinnercustomthm}
\newcommand{\adjunction}{\@ifstar\named@adjunction\normal@adjunction}
\newcommand{\normal@adjunction}[4]{%
  #1\colon #2%
  \mathrel{\vcenter{%
    \offinterlineskip\m@th
    \ialign{%
      \hfil$##$\hfil\cr
      \longrightharpoonup\cr
      \noalign{\kern-.3ex}
      \smallbot\cr
      \longleftharpoondown\cr
    }%
  }}%
  #3 \noloc #4%
}
\newcommand{\named@adjunction}[4]{%
  #2%
  \mathrel{\vcenter{%
    \offinterlineskip\m@th
    \ialign{%
      \hfil$##$\hfil\cr
      \scriptstyle#1\cr
      \noalign{\kern.1ex}
      \longrightharpoonup\cr
      \noalign{\kern-.3ex}
      \smallbot\cr
      \longleftharpoondown\cr
      \scriptstyle#4\cr
    }%
  }}%
  #3%
}
\newcommand{\longrightharpoonup}{\relbar\joinrel\rightharpoonup}
\newcommand{\longleftharpoondown}{\leftharpoondown\joinrel\relbar}
\newcommand\noloc{%
  \nobreak
  \mspace{6mu plus 1mu}
  {:}
  \nonscript\mkern-\thinmuskip
  \mathpunct{}
  \mspace{2mu}
}
\newcommand{\smallbot}{%
  \begingroup\setlength\unitlength{.15em}%
  \begin{picture}(1,1)
  \roundcap
  \polyline(0,0)(1,0)
  \polyline(0.5,0)(0.5,1)
  \end{picture}%
  \endgroup
}
\newcommand{\infinity}{\mbox{\footnotesize $\infty$}}
\newcommand{\oblv}{\operatorname{oblv}}
\newcommand{\Einf}{\mathbb{E}_{{\scriptstyle\infty}}}
\newcommand{\LMod}{{\operatorname{LMod}}}
\newcommand{\LModtwo}{\mathbf{LMod}}
\newcommand{\Modtwo}{\mathbf{Mod}}
\newcommand{\RMod}{{\operatorname{RMod}}}
\newcommand{\op}{\operatorname{op}}
\newcommand{\PSt}{\operatorname{PSt}}
\newcommand{\Perf}{{\operatorname{Perf}}}
\newcommand{{\Cn}}{\operatorname{Cn}}
\newcommand{\BGm}{\mathbf{B}\Gm}
\newcommand{\Sphere}{\mathbb{S}}
\DeclareMathOperator*{\colim}{colim}
\newcommand{\Mapin}{{\underline{\smash{\Map}}}}
\newcommand{\scrA}{\mathscr{A}}
\newcommand{\scrC}{\mathscr{C}}
\newcommand{\scrD}{\mathscr{D}}
\newcommand{\scrF}{\mathscr{F}}
\newcommand{\scrG}{\mathscr{G}}
\newcommand{\scrO}{\mathscr{O}}
\newcommand{\scrX}{\mathscr{X}}
\newcommand{\scrY}{\mathscr{Y}}
\newcommand{\hsp}{\hspace{0.05cm}}
\DeclareMathOperator{\fib}{fib}
\DeclareMathOperator{\Spec}{Spec}
\DeclareMathOperator{\Qcoh}{QCoh}
\DeclareMathOperator{\QCoh}{QCoh}
\DeclareMathOperator{\Coh}{Coh}
\DeclareMathOperator{\IndCoh}{IndCoh}
\newcommand{\cSpec}{\operatorname{cSpec}}
\newcommand{\CX}{\operatorname{C}^{\bullet}(X;\Bbbk)}
\newcommand{\CY}{\operatorname{C}^{\bullet}(Y;\Bbbk)}
\newcommand{\CK}[1][1]{\operatorname{C}^{\bullet}(K(\pi,#1);\Bbbk)}
\newcommand{\aff}{\operatorname{aff}}
\newcommand{\COX}{\operatorname{C}^{\bullet}(\Omega_*X;\Bbbk)}
\newcommand{\ShvCat}{\operatorname{ShvCat}}
\newcommand{\ShvCattwo}{\mathbf{ShvCat}}
\newcommand{\Mod}{{\operatorname{Mod}}}
\newcommand{\lp}{\left(}
\newcommand{\rp}{\right)}
\newcommand{\scrU}{\mathscr{U}}
\newcommand{\scrV}{\mathscr{V}}
\newcommand{\CatUinfty}[1][1]{\operatorname{Cat}_{({\scriptstyle\infty},#1)}}
\newcommand{\CatVinfty}[1][1]{\smash{\widehat{\operatorname{Cat}}_{({\scriptstyle\infty},#1)}}}
\newcommand{\CatVinftycocom}[1][1]{\smash{\widehat{\operatorname{Cat}}}^{\operatorname{rex}}_{({\scriptstyle\infty},#1)}}
\newcommand{\CatVtwoinfty}[1][1]{\smash{\widehat{\mathbf{Cat}}_{({\scriptstyle\infty},#1)}}}
\newcommand{\CatWinfty}[1][1]{\smash{\widehat{\operatorname{CAT}}_{({\scriptstyle\infty},#1)}}}
\newcommand{\infinitone}{(\infinity,1)}
\newcommand{\PrLU}[1][1]{\operatorname{Pr}^{\operatorname{L}}_{({\scriptstyle\infty},#1)}}
\newcommand{\IndCohL}{\operatorname{IndCoh}^{\operatorname{L}}}
\newcommand{\PrLUtwo}[1][1]{\mathbf{Pr}^{\operatorname{L}}_{({\scriptstyle\infty},#1)}}
\newcommand{\PrLUtwotiny}[1][1]{\mathbf{Pr}^{\operatorname{L}}_{({\scriptscriptstyle\infty},#1)}}
\newcommand{\Ind}{{\operatorname{Ind}}}
\newcommand{\LC}{\operatorname{LocSys}}
\newcommand{\LocSysCat}{\operatorname{LocSysCat}}
\newcommand{\LocSysCattwo}{\mathbf{LocSysCat}}
\newcommand{\LinAPrLU}[1][1]{\operatorname{Lin}_{\scrA}\operatorname{Pr}^{\operatorname{L}}_{({\scriptstyle\infty},#1)}}
\newcommand{\LinAPrLUtwo}[1][1]{\mathbf{Lin}_{\scrA}\mathbf{Pr}^{\operatorname{L}}_{({\scriptstyle\infty},#1)}}
\newcommand{\LinAPrLUtwotiny}[1][1]{\mathbf{Lin}_{\scrA}\mathbf{Pr}^{\operatorname{L}}_{({\scriptscriptstyle\infty},#1)}}
\newcommand{\LinkPrLU}[1][1]{{{\operatorname{Lin}_{\Bbbk}}{\operatorname{Pr}^{\operatorname{L}}_{({\scriptstyle\infty},#1)}}}}
\newcommand{\LinkPrLUtwo}[1][1]{{\mathbf{Lin}_{\Bbbk}\mathbf{Pr}^{\operatorname{L}}_{({\scriptstyle\infty},#1)}}}
\newcommand{\LinkPrLUtwotiny}[1][1]{\mathbf{Lin}_{\Bbbk}\mathbf{Pr}^{\operatorname{L}}_{({\scriptscriptstyle\infty},#1)}}
\newcommand*{\longhookrightarrow}{\ensuremath{\lhook\joinrel\relbar\joinrel\rightarrow}}
\def\enddoc@text{}
\newcounter{savedchapter}
\preto\appendix{\setcounter{savedchapter}{\arabic{chapter}}}
\newcommand\resumechapters{
	\setcounter{chapter}{\arabic{savedchapter}}
	\setcounter{section}{0}
	\gdef\@chapapp{\chaptername}
	\gdef\thechapter{\@arabic\c@chapter}
}
\newcommand{\scrS}{\mathscr{S}}
\newcommand{\Ebb}{\mathbb{E}}
\newcommand{\Ga}{\mathbb{G}_{\operatorname{a},\Bbbk}}
\newcommand{\Gm}{\mathbb{G}_{\operatorname{m},\Bbbk}}
\newcommand{\ZZ}{\mathbb{Z}}
\newcommand{\NN}{\mathbb{N}}
\newcommand{\QQ}{\mathbb{Q}}
\newcommand{\CC}{\mathbb{C}}
\newcommand{\PP}{\mathbb{P}}
\newcommand{\Map}{{\operatorname{Map}}}
\newcommand{\Sym}{{\operatorname{Sym}}}
\newcommand{\Fun}{{\operatorname{Fun}}}
\newcommand{\Funin}{{\underline{\smash{\Fun}}}}
\newcommand{\Funtwo}{\mathbf{Fun}}
\newcommand{\FuninL}{\Funin^{\operatorname{L}}}
\newcommand{\FuninLtwo}{{\underline{\smash{\mathbf{Fun}}}}^{\operatorname{L}}}
\newcommand{\LocSys}{{\operatorname{LocSys}}}
\newcommand{\CAlg}{{\operatorname{CAlg}}}
\newcommand{\Alg}{{\operatorname{Alg}}}
\numberwithin{equation}{subsection}
\theoremstyle{plain}
\newtheorem{theorem}[equation]{Theorem}
\crefname{theorem}{Theorem}{Theorems}
\crefname{thm}{Theorem}{Theorems}
\newtheorem{lemma}[equation]{Lemma}
\crefname{lemma}{Lemma}{Lemmas}
\newtheorem{proposition}[equation]{Proposition}
\crefname{proposition}{Proposition}{Propositions}
\crefname{prop}{Proposition}{Propositions}
\newtheorem{corollary}[equation]{Corollary}
\crefname{corollary}{Corollary}{Corollaries}
\crefname{conj}{Conjecture}{Conjectures}
\newtheorem{question}[equation]{Question}
\crefname{question}{Question}{Questions}
\crefname{slogan}{Slogan}{Slogans}
\theoremstyle{definition}
\newtheorem{defn}[equation]{Definition}
\crefname{defn}{Definition}{Definitions}
\newtheorem{parag}[equation]{}
\crefname{parag}{Paragraph}{Paragraphs}
\newtheorem{remark}[equation]{Remark}
\crefname{remark}{Remark}{Remarks}
\crefname{axiom}{Axiom}{Axioms}
\newtheorem{construction}[equation]{Construction}
\newtheorem{porism}[equation]{Porism}
\newtheorem{warning}[equation]{Warning}
\newtheorem{notation}[equation]{Notation}
\newtheorem{fact}[equation]{Fact}
\newtheorem{exmp}[equation]{Example}
\newtheorem*{warning*}{Warning}
\newtheorem*{assumption*}{Standing assumption}
\newcommand{\Sp}{\mathscr{S}\mathrm{p}}
\newcommand\restr[2]{{
		\left.\kern-\nulldelimiterspace
		#1
		\vphantom{\|}
		\right|_{#2} 
}}
\providecommand{\abstract}{}
\newcommand{\stackspace}{1.7}
\newcommand{\stack}[2][1cm]{\;\tikz[baseline, yshift=.65ex]%
	{\foreach \k [evaluate=\k as \r using (.5*#2+.5-\k)*\stackspace] in {1,...,#2}{%
			\ifodd\k{\draw[->](0,\r pt)--(#1,\r pt);}%
			\else{\draw[<-](0,\r pt)--(#1,\r pt);}\fi
	}}\;}
    \newcommand{\stackrev}[2][1cm]{\;\tikz[baseline, yshift=.65ex]%
	{\foreach \k [evaluate=\k as \r using (.5*#2+.5-\k)*\stackspace] in {1,...,#2}{%
			\ifodd\k{\draw[<-](#1,\r pt) -- (0,\r pt);}%
			\else{\draw[->](#1,\r pt) -- (0,\r pt);}\fi
	}}\;}
\begin{document}	
	\title{Higher Koszul duality and $n$-affineness}
	\author{James Pascaleff}
	\address[James Pascaleff]{University of Illinois\\West Green Street, 1409\\61801, Urbana, IL, United States}\email{\href{mailto:jpascale@illinois.edu}{jpascale@illinois.edu}}
	\author{Emanuele Pavia}
	\address[Emanuele Pavia]{SISSA\\Via Bonomea 265\\34136 Trieste, TS, Italy}	\email{\href{mailto:epavia@sissa.it}{epavia@sissa.it}}
	\author{Nicolò Sibilla}
	\address[Nicolò Sibilla]{SISSA\\Via Bonomea 265\\34136 Trieste, TS, Italy}
	\email{\href{mailto:nsibilla@sissa.it}{nsibilla@sissa.it}}
\maketitle

\begin{abstract}

In this paper we study $\mathbb{E}_n$-Koszul duality in the topological setting, and the closely related question of \emph{$n$-affineness} for Betti stacks. 
The $\mathbb{E}_n$-Koszul  dual of the algebra of chains on the $n$-fold loop space of a space $X$ is the algebra of cochains on $X$.  It was expected  that $\mathbb{E}_n$-Koszul duality should induce a kind of Morita equivalence between categories of iterated modules, but even the precise formulation of such a statement was not known. We give a rigorous formulation, and a proof, of such an $\mathbb{E}_n$-Koszul duality in the topological setting as an equivalence of $(\infinity,n)$-categories. 
Conceptually, our main innovation is highlighting  the  coaffine stack defined by the \emph{cospectrum} of $\mathrm{C}^{\bullet}(X;\Bbbk)$ as a key geometric object supporting Koszul duality.    
Our result is new already in the classical case $n=1$, although it can be seen to recover  well known formulations of $\mathbb{E}_1$-Koszul duality as a Morita equivalence of module categories (up to  appropriate completions of the $t$-structures). We also investigate (higher) affineness  properties of  Betti stacks.  We give a complete characterization of $n$-affine Betti stacks, in terms of the $0$-affineness of their iterated loop space. As a consequence, we prove that $n$-truncated Betti stacks are $n$-affine; and that $\pi_{n+1}(X)$ is an obstruction to $n$-affineness.

 \end{abstract}
\tableofcontents
{\section*{Introduction}
\addtocontents{toc}{\protect\setcounter{tocdepth}{0}}
\renewcommand\thesubsection{I.\arabic{subsection}}
\renewcommand\theequation{\thesubsection.\arabic{equation}}
\setcounter{equation}{0}
 
In this paper we study local systems of higher categories over spaces. Our main goal is formulating and proving  $\Ebb_k$-Koszul duality between the algebras of chains on the   iterated loop space of a space $X$, and the algebra of cochains on $X$, as an equivalence of $(\infinity,k)$-categories.

To understand the context of our results, let us recall the classical picture of   Koszul duality in the topological setting. Let $\Bbbk$ be a field of characteristic zero. If $X$ is a connected space, local systems of $\Bbbk$-vector spaces over $X$ are determined by monodromy data, in the sense that the abelian category of such local systems is equivalent to the category of representations of $\pi_{1}(X)$. 
Understanding the higher cohomology of local systems requires more information that is not captured by $\pi_1(X)$, and in fact depends on full homotopy type of $X$. More precisely,  
the stable $(\infinity,1)$-category of complexes of sheaves of vector spaces on $X$ whose cohomology sheaves are local systems is equivalent to the stable $(\infinity,1)$-category of complexes of modules over $\mathrm{C}_{\bullet}(\Omega_*X;\Bbbk)$, the algebra of chains on the based loop space of $X$. In formulas, there is an equivalence \begin{equation}
\label{fundintro}
\tag{$\ast$}
\LocSys(X;\Bbbk) \simeq \LMod_{\mathrm{C}_{\bullet}(\Omega_*X;\Bbbk)} 
\end{equation} which we  refer to as the \emph{monodromy equivalence}.

If $X$ is simply connected, the category of local systems on $X$ admits also an alternative description.  This is a consequence of 
  \emph{Koszul duality}, the main object of study of this article.  
 The $\Ebb_{1}$-Koszul dual of $\mathrm{C}_{\bullet}(\Omega_*X;\Bbbk)$ is $\CX$, the algebra of cochains on $X$. Under certain finiteness hypotheses, the reciprocal duality also holds, and moreover there is a tight relationship between the $(\infinity,1)$-categories of complexes of modules over these two algebras. Hence, under these hypotheses, local systems over $X$ also admit a description in terms of $\CX$.

Passing to the $k$-categorical level, local systems of complexes of vector spaces are replaced by local systems of $\Bbbk$-linear $(\infinity,k)$-categories,  the loop space is replaced by the $k$-fold iterated loop space $\Omega_{*}^{k}X$, and Koszul duality of $\Ebb_{1}$-algebras is replaced by Koszul duality of the $\Ebb_{k}$-algebras $\mathrm{C}_{\bullet}(\Omega^{k}_*X;\Bbbk)$ and $\CX$. In this paper, relying on the    groundwork set down in the companion paper \cite{Pascaleff_Pavia_Sibilla_Local_Systems}, we generalize the classical picture of  $\mathbb{E}_1$-Koszul duality   to the setting of $\mathbb{E}_k$-Koszul duality. Along the way, we also explore the closely related question of  $k$-affineness for Betti stacks. 


The next section of this introduction provides a more detailed discussion of Koszul duality and outlines the main ideas underlying our approach to $\mathbb{E}_n$-Koszul duality. Readers primarily interested in our results may wish to proceed directly to \cref{mainresults}, where we present an overview of the paper's structure and state our main theorems.

\subsection{Koszul duality}
\label{intro1}


We start by recalling in some greater detail the  classical picture of   Koszul duality in the topological setting.   A standard reference  is  \cite{koszuldualitypatterns}, although our exposition will depart in some respects from that paper. 
Let $X$ be a pointed and simply connected finite CW complex, and let $\Omega_*X$ be the space of loops in $X$ based at the chosen point. The complexes $\mathrm{C}_{\bullet}{\lp\Omega_*X;\Bbbk\rp}$ and $\CX$ naturally acquire the structure of augmented differential graded algebras, and they are \textit{Koszul dual} one to the other. Classically, this means that we have the following two closely related statements: 
  \begin{intrenum}
 \item\label{intro:1} The algebra of endomorphisms of the augmentation of $\mathrm{C}_{\bullet}{\lp\Omega_*X;\Bbbk\rp}$ is  equivalent to $\mathrm{C}^{\bullet}(X;\Bbbk)$, and vice versa. In symbols:
 $$
 \mathrm{C}_{\bullet}{\lp\Omega_*X;\Bbbk\rp}\simeq  \Mapin_{\mathrm{C}^{\bullet}(X;\Bbbk)}{\lp\Bbbk,\hsp\Bbbk\rp} \quad \text{and} \quad 
 \mathrm{C}^{\bullet}(X;\Bbbk) \simeq \Mapin_{\mathrm{C}_{\bullet}{\lp\Omega_*X;\Bbbk\rp}}{\lp\Bbbk,\hsp\Bbbk\rp}.
 $$
\item\label{intro:morita}The functor between $\LMod_{\mathrm{C}_{\bullet}{\lp\Omega_*X;\Bbbk\rp} }$ and 
$\LMod_{\mathrm{C}^{\bullet}{\lp X;\Bbbk\rp} }$  
given by  
\begin{equation}
\label{2eqintro}
\Mapin_{\mathrm{C}_{\bullet}{\lp\Omega_*X;\Bbbk\rp}}{\lp \Bbbk,\hsp - \rp}:  \LMod_{ \mathrm{C}_{\bullet}{\lp\Omega_*X;\Bbbk\rp} } \longrightarrow 
\LMod_{ \mathrm{C}^{\bullet}{\lp X;\Bbbk\rp} }
\end{equation}
is \emph{almost}, but not quite, a Morita equivalence,  as we explain next. \end{intrenum}

Under the equivalence  $
\LocSys(X;\Bbbk) \simeq \LMod_{\mathrm{C}_{\bullet}(\Omega_*X;\Bbbk)} 
$ the augmentation module is sent to the constant local system $\underline{\Bbbk}_X$,  
and the functor $\Mapin_{\mathrm{C}_{\bullet}{\lp\Omega_*X;\Bbbk\rp}}{\lp \Bbbk,\hsp - \rp}$ corresponds to the enhanced global sections  \begin{equation}
\label{globsecintro}
\Gamma(X, -): \LocSys(X;\Bbbk) \longrightarrow \Mod_{\mathrm{C}^{\bullet}{\lp X;\Bbbk\rp}}.
\end{equation}
As it turns out, the enhanced global section functor \eqref{globsecintro}, and thus  functor \eqref{2eqintro}, are almost never  equivalences.    
Using the terminology of algebraic geometry, we can express this by saying that finite CW complexes, or more precisely their \emph{Betti stacks}, 
are almost never \emph{affine} (see \cref{warning:wrongkoszul}). Here we understand affineness precisely as the property that global sections define an equivalence between the stable $(\infinity,1)$-category of quasi-coherent sheaves, and the stable $(\infinity,1)$-category of modules over the global sections of the structure sheaf. 

The failure of Koszul duality to give rise to an actual Morita equivalence is one the main subtleties of the theory.  
There are several ways to obviate this, and turn \ref{intro:morita} into a rigorous mathematical statement. 
It is possible to show  that functor \eqref{2eqintro} does restrict to an equivalence between categories of appropriately bounded modules: more precisely, there is an equivalence 
\begin{equation}
\label{bounded} \LMod^-_{ \mathrm{C}_{\bullet}{\lp\Omega_*X;\Bbbk\rp} } \simeq 
\LMod^+_{ \mathrm{C}^{\bullet}{\lp X;\Bbbk\rp} }
\end{equation}
between  \emph{bounded above}  $\mathrm{C}_{\bullet}{\lp\Omega_*X;\Bbbk\rp}$-modules, and \emph{bounded below} $\mathrm{C}^{\bullet}(X;\Bbbk)$-modules (compare with \cite[Theorem $12.6$]{koszuldualitypatterns}). Alternatively, we can modify the notion of module we work with. Namely, the  functor \eqref{2eqintro} induces an equivalence \begin{equation}
\label{eqqeqintro}
\LMod_{\operatorname{C}_{\bullet}(\Omega_*X;\Bbbk)} 
\simeq  
\IndCoh_{\operatorname{C}^\bullet(X, \Bbbk)}
\end{equation}
where the right-hand side is the $(\infinity,1)$-category of \emph{ind-coherent} modules over $\CX$, which we define formally in \cref{sec:nkoszulcoaffine} of the main text: suffice it to say for the moment that, in this setting, this is the $(\infinity,1)$-category generated by the augmentation module. It is this latter formulation of $\mathbb{E}_1$-Koszul duality  which will be particularly  relevant  for our approach to $\mathbb{E}_k$-Koszul duality. 

From now on, in order to be consistent with the notations of the paper, we fix an integer $n\geqslant0$ and set $k\coloneqq n+1$. Let $X$ be a pointed and $(n+1)$-connected finite CW complex. Much as before, we can associate to $X$ two augmented algebras: except now these will be $\mathbb{E}_{n+1}$- rather than $\mathbb{E}_1$-algebras. On the one hand, the $(n+1)$-th iterated loop space
$$
\Omega_*^{n+1}X\coloneqq\Omega_* \ldots \Omega_* X
$$is a $\mathbb{E}_{n+1}$-space; thus, $\mathrm{C}_\bullet(\Omega_*^{n+1}X; \Bbbk)$ carries a $\mathbb{E}_{n+1}$-product. In this setting we have  an $(\infinity,n)$-categorical analogue of the monodromy equivalence, which relates local systems of presentable $(\infinity,n)$-categories and  iterated modules over $\mathrm{C}_\bullet(\Omega_*^{n+1}X; \Bbbk)$. This is the main result of the companion paper 
\cite{Pascaleff_Pavia_Sibilla_Local_Systems}, and we will refer to it as the \emph{categorified monodromy equivalence}: 
\begin{align}
\label{catmon}
\tag{$\ast\ast$}
(n+1)\LocSysCattwo^n(X)&\simeq  (n+1)\mathbf{LMod}_{n\LModtwo_{\Omega_*^{n+1}X}(\scrS)}\PrLUtwo[n].
\end{align} 
On the other hand, the algebra of $\Bbbk$-valued cochains $\CX$ on $X$ is naturally a $\Einf$-algebra, so we can regard it in particular as an $\mathbb{E}_{n+1}$-algebra. The key claim is that these two algebras are  $\mathbb{E}_{n+1}$-Koszul dual to each other.

 Applying \cite[Theorem $4.4.5$]{dagx} one can  \textit{almost} deduce an $\Ebb_n$-analog of statement \ref{intro:1}. Indeed, using \cite[Example $5.3.1.5$ and Lemma $5.3.1.11$]{ha}, one can prove that the Koszul dual of an augmented $\Ebb_{n+1}$-algebra $A\to\Bbbk$ is the morphism object
\[
A^!\coloneqq\Mapin_{\Mod^{\Ebb_{n+1}}_A}{\lp A,\hsp\Bbbk\rp}.
\]
However, no analog of statement \ref{intro:morita} has been established in the literature.  In fact, as far as we are aware of, even how to properly formulate \ref{intro:morita} in the $\mathbb{E}_{n+1}$-setting  was not known. 

In this paper we prove an $\Ebb_{n+1}$-analog of statement \ref{intro:morita}, and this will yield in particular an  $\Ebb_{n+1}$-analog of statement \ref{intro:1}. Conceptually,  our  main innovation consists in reinterpreting  statement \ref{intro:morita}, and in particular equivalence \eqref{eqqeqintro}, from a novel perspective which makes categorification possible.  
The key idea is that instead of viewing $\CX$ merely as a $\Einf$-algebra, we can do algebraic geometry with it. The algebra of cochains $\CX$ can be endowed with a structure of a commutative dg algebra 
concentrated in negative degrees; the contrary of what we require of a derived affine scheme. Following To\"{e}n (\cite{toenchampsaffines}) and Lurie (\cite{dagviii}), however,  we can still view such an algebra as the algebra of functions on a kind of geometric object: namely a \emph{coaffine stack}, which is called the cospectrum of $\CX$, and denoted $\cSpec(\CX)$.

 Quasi-coherent sheaves on $\cSpec(\CX)$ can be viewed as a \emph{renormalization} of the category of $\CX$-modules. Under our assumptions on $X$,  they coincide with ind-coherent modules
\begin{equation}
\label{eqqeqintro11/2}
 \QCoh(\cSpec(\CX))  \simeq \IndCoh_{\operatorname{C}^\bullet(X, \Bbbk)}.
\end{equation}
This yields a new formulation of the \emph{almost} Morita equivalence which is at the heart of $\mathbb{E}_1$-Koszul duality.  Namely, combining \eqref{eqqeqintro} and \eqref{eqqeqintro11/2}  we obtain  an equivalence 
\begin{equation}
\label{eqqeqintro2}
\LMod_{\operatorname{C}_{\bullet}(\Omega_*X;\Bbbk)} 
\simeq  
\QCoh(\cSpec(\CX)).
\end{equation}
We stress that, as far as we know, this formulation 
of $\mathbb{E}_1$-Koszul duality as equivalence (\ref{eqqeqintro2}) is new. It is  our guiding principle in our exploration of higher Koszul duality. Indeed, via (\ref{eqqeqintro2}), the notion of ind-coherent  $\CX$-module required to turn Koszul duality into an actual Morita equivalence is encoded  in the geometry of $\cSpec(\CX)$. The great advantage over other  formulations of Koszul duality is that equivalence \eqref{eqqeqintro2} is well adapted to categorification.

\subsection{Main results}
\label{mainresults}
We shall give next an overview  of the contents of the paper, and state our main results. In \cref{mainsetup} we survey briefly all preliminary material which will be required in the remainder of the paper. This includes the theory of presentable $(\infinity,n)$-categories, which was recently introduced by Stefanich in \cite{stefanich2020presentable}, and forms the technical  backbone of many of our constructions. Additionally we recall the main results we obtained in the companion paper \cite{Pascaleff_Pavia_Sibilla_Local_Systems}, and most importantly the categorified monodromy equivalence \eqref{catmon}.
 
As we have explained, the reason that stating   $\mathbb{E}_1$-Koszul duality  an equivalence of categories is  subtle is that Betti stacks are rarely affine (see the discussion around the definition of the enhanced global sections functor in  (\ref{globsecintro})). This feature persists in the $\mathbb{E}_n$-setting, where  the relevant concept   is  a categorification of ordinary affineness, i.e. the notion of  \emph{$n$-affineness}  which was introduced by Gaitsgory \cite{1affineness} and Stefanich \cite{stefanichthesis}. We devote   \cref{sec:naffineness1} to  studying  the question of $n$-affineness for Betti stacks. We obtain a complete characterization of $n$-affine Betti stacks, which has however the drawback of not being explicit: it reduces the question of $n$-affineness of a Betti stack $X_{\operatorname{B}}$, which is $n$-categorical in nature, to a purely $1$-categorical condition on the Betti stack of the iterated loop space 
$\Omega_*^nX$.
\begin{customthm}{A}[\cref{prop:naffineifloopn-1} and \cref{remark:decategorified}]
\label{thm:main3}
Let $X$ be a space with choices of base points on each connected component of $\Omega^m_*X$, for $0\leqslant m \leqslant n$. Then its Betti stack $X_{\operatorname{B}}$ is $n$-affine if and only if the global section functor 
$$
\Gamma(\Omega^n_*X,-)  \colon \LocSys(\Omega_*^nX ; \Bbbk) \longrightarrow \Mod_{\Bbbk}
$$ 
is monadic.
\end{customthm}
The criterion for $n$-affineness of \cref{thm:main3} is however difficult to check in practice. To obviate this shortcoming we extract one sufficient condition, and one necessary condition, which are both easily verifiable.
\begin{customthm}{B}[\cref{thm:naffinenessntruncated} 
 and \cref{obstruction}]
\label{thm:main4}
Let $X$ be a space, and let $\Bbbk$ be a field of characteristic zero. 
\begin{thmenum}
\item\label{thm:notnecessary}
If $X$ is $n$-truncated, then its Betti stack $X_{\operatorname{B}}$ is $n$-affine.
\item Suppose that $\pi_{n+1}(X)$ does not vanish for some choice of a base point in $X$. Then the Betti stack $X_{\operatorname{B}}$ is not  $n$-affine over $\Bbbk$.
\end{thmenum}
\end{customthm}
In \cref{sec:nkoszulcoaffine} we turn our attention to   $\Ebb_n$-Koszul duality. As we explained, one of our main ideas is that 
the \emph{cospectrum} of the coconnective cdga of cochains on $X$, $\CX$, plays a key role in the study of Koszul duality for this class of algebras. 
The  main result of  \cref{sec:nkoszulcoaffine} is a categorification of \eqref{eqqeqintro2}.

\begin{customthm}{C}[\cref{thm:mainkoszuln}]
\label{mainintroE}
Let $n\geqslant 1$ be an integer, let $\Bbbk$ be a field of characteristic zero, and let $X$ be a pointed $(n+1)$-connected space satisfying appropriate finiteness conditions.  Then there is a natural equivalence of $(\infinity,n+1)$-categories 
\begin{equation}
\label{nkoszulintro}
(n+1)\ShvCattwo^{n}(\cSpec(\CX)) \simeq (n+1)\LocSysCattwo^{n}(X;\Bbbk).
\end{equation}
Combining this with the categorified monodromy equivalence  \eqref{catmon}, 
we obtain a commutative square of  equivalences of $(\infinity,n+1)$-categories

\begin{equation}
\label{mdiag2}
\begin{tikzpicture}[scale=0.75,baseline=(current  bounding  box.center)]
\node (a) at(-5,0){$(n+1)\mathbf{Mod}_{n\Modtwo^{n-1}_{\mathrm{C}_{\bullet}(\Omega_*^{n+1}X;\Bbbk)}}{\lp (n+1)\PrLUtwo[n]\rp}$};
\node (b) at (5,0){$(n+1)\ShvCattwo^{n}(\cSpec(\CX))$};
\node (c) at (-5,-2.5){$(n+1)\LocSysCattwo^{n}(X;\Bbbk)$};
\node (d) at (5,-2.5){$(n+1)\ShvCattwo^{n}(X_{\mathrm{B}})$};
\draw[<->,font=\scriptsize] (a) to node[above]{$\simeq$}(b);
\draw[<->,font=\scriptsize] (a) to node[above,rotate=90]{$\simeq$}(c);
\draw[<->,font=\scriptsize] (c) to node[above]{$\simeq$}(d);
\draw[<->,font=\scriptsize] (b) to node[above,rotate=-90]{$\simeq$}(d);
\end{tikzpicture}
\end{equation}
\end{customthm}

Let us explain the notations in the statement of \cref{mainintroE}. If $\mathfrak{X}$ is a stack the $(n+1)$-category $(n+1)\ShvCattwo^{n}(\mathfrak{X})$ is the $(n+1)$-category of quasi-coherent sheaves of (presentably $\Bbbk$-linear) $n$-categories over $S$. This is a categorification of the usual category of quasi-coherent sheaves: when $n=1$, this was defined in \cite{1affineness}, while for arbitrary $n$ it has been recently introduced in \cite{stefanichthesis}. In the bottom right corner of diagram (\ref{mdiag2}), $X_{\mathrm{B}}$ is the Betti stack associated with the space $X$.

\cref{mainintroE} is the main theorem of this article. 
 The top equivalence of diagram (\ref{mdiag2}) is a $n$-fold categorification of equivalence \eqref{eqqeqintro2}. It provides a complete solution to the problem of generalizing the (almost)  Morita equivalence \ref{intro:morita} to the setting of   $\Ebb_n$-Koszul duality in the topological  context.

 \begin{remark}
The appearance of $\cSpec(\CX)$ in the statement might seem only a technical artifact of our approach; on the contrary, we believe that our work clarifies the true nature of Koszul duality in this setting.   
There is a canonical map of stacks
$$
\mathrm{aff}_X\colon X_{\operatorname{B}} \longrightarrow \cSpec(\CX)
$$
called the \emph{affinization map}. Equivalence \eqref{nkoszulintro} is given precisely by the pull back along $\mathrm{aff}_X$. The real content of Koszul duality in this setting is therefore that, under appropriate connectivity assumptions on $X$, the theory of (higher) local systems does not distinguish between $X_{\operatorname{B}}$ and $\cSpec(\CX)$. We believe this to be a more transparent statement already in the classical case $n=1$ where, as we discussed, the standard formulation of the duality between $\CX$ and $\mathrm{C}_{\bullet}(\Omega_*X;\Bbbk)$ requires otherwise artificial size restrictions, or $t$-structure renormalizations.
\end{remark}
We conclude the paper proving some easy consequences  of our results.  
In particular, 
\cref{mainintroE} allows us to test the $n$-affineness of some coaffine stacks by using the criterion of \cref{thm:main4}. This has the  following interesting consequence. 
\begin{customthm}{D}[\cref{cor:1affinenessga}]
\label{thm:introD}
For $k\geqslant1$ and $\Bbbk$ a field of characteristic zero, the stack $\mathbf{B}^{n+2k}\Ga$ is $n$-affine, while the stack $\mathbf{B}^{n+2k+1}\Ga$ is not $n$-affine.
\end{customthm}
When $n=1$, \cref{thm:introD} generalizes \cite[Theorem 2.5.7]{1affineness}, which proved the $1$-affineness of $\mathbf{B}\Ga$, $\mathbf{B}^2\Ga$ and $\mathbf{B}^3\Ga$ and proved the non-$1$-affineness of $\mathbf{B}^4\Ga.$
\subsection{Future directions}
While \cref{thm:mainkoszuln} provides a clean and complete answer to the problem of Koszul duality between (categorified) modules over $\Ebb_k$-Koszul dual algebras coming from algebraic topology, some questions are left open. 
\begin{question}
What can be said when $\Bbbk$ is a field of characteristic $p>0$?
\end{question}
In some ways, the answer to this question is subtle. Up to replacing $\Einf$-rings with derived commutative rings (in the sense of \cite{raksit2020}), the main characters appearing in \cref{thm:mainkoszuln} make sense in this setting as well. Moreover, When $k=0$ or $k\geqslant2$ the classical statement and/or our techniques still apply to this setting as well.

However, when $k=1$, our proof breaks down: the main technical input is \cref{lemma:locfullyfaithful}, which heavily relies on rational homotopy theory and the structure of rational H-spaces. It is not clear whether this issue can be solved using tools from $p$-adic homotopy theory developed in \cite{mandell_p_adic}: in any case, this will require to move to an algebraic or separable closure of $\mathbb{F}_p$.

On another note, one could be tempted to check whether it is possible to obtain a similar description of Koszul duality between modules for \textit{arbitrary} $\Ebb_k$-algebras, and not only those coming from algebraic topology. Namely:
\begin{question}
Let $\Bbbk$ be a field of characteristic zero, let $A$ be a $\Ebb_k$-Koszul $\Bbbk$-algebra with $\Ebb_k$-Koszul dual $A^{!_{\Ebb_k}}$ satisfying suitable finiteness assumption. Can one relate the $k$-categories of iterated modules on $A$ and $A^{!_{\Ebb_k}}$?
\end{question}
If $A$ is a $(k,\infinity)$-bialgebra (that is, it is a $\Ebb_k$-algebra inside the category of cocommutative coalgebras in $\Mod_{\Bbbk}$), then its Koszul dual $A^{!_{\Ebb_k}}$ is actually the $\Bbbk$-linear dual of the $\Ebb_k$-Koszul dual $\Einf$-coalgebra $\smash{\widetilde{A}}^{!_{\Ebb_k}}$ of $A$; in particular, it is still a $\Einf$-algebra. So, with the exception of the usual \cref{lemma:locfullyfaithful}, our techniques cover this problem as well.

However, to our knowledge, there is no known categorification of the category of comodules over a $\Ebb_k$-coalgebra, nor can our statement be reproduced \textit{verbatim} in this setting. Indeed, arbitrary $\Ebb_k$-algebras and coalgebras cannot be embedded naturally in the category of derived stacks, so the formalism of derived algebraic geometry cannot be applied straight-forwardly.
 
\subsection*{Notations and conventions}
\begin{itemize}
\item We will use throughout the language of $(\infinity,1)$-categories and higher homotopical algebra, as developed in \cite{htt,ha}, from which we borrow most of the notations and conventions.
\item Our work  relies on intrinsically derived and homotopical concepts. So we shall simply write ``limits'', ``colimits'', ``tensor product'', suppressing adjectives such as ``homotopy'' or ``derived'' in our notations. Similarly, we shall simply write ``categories'' instead of ``$\infinitone$-categories'', and ``$n$-categories'' instead of ``$(\infinity,n)$-categories''. 
\item We will work with \textit{local systems} and \textit{sheaves} of categories, and it will be important pay attention to size issues. We fix a sequence of nested universes $\scrU \in\scrV\in\mathscr{W}\in\ldots$. We shall say that a category $\scrC$ is \textit{small} if it is $\scrU$-small, that $\scrC$ is \textit{large} if it is $\scrV$-small without being $\scrU$-small, that $\scrC$ is \textit{very large} if it is $\mathscr{W}$-small without being $\scrV$-small, and that $\scrC$ is \textit{huge} if it is not even $\mathscr{W}$-small. When dealing with categories of (possibly decorated) categories, we shall adopt the following notations in order to distinguish the size: large categories of categories will be denoted with a normal font; very large categories of categories will be denoted with $\smash{\widehat{(-)}}$; huge categories of categories will be denoted with $\smash{\widehat{(-)}}$ and capital letters.\\
For example, $\CatUinfty$ is the large category of small categories, while $\CatVinfty$ is the very large category of large categories, and $\CatWinfty$ is the huge category of very large categories.
\item We shall denote the large category of small spaces by $\scrS$. In particular, by \textit{space} we always mean \textit{small space}.
\item The large category $\PrLU$ of large presentable categories and the very large category of $\CatVinftycocom$ of large cocomplete categories are both symmetric monoidal categories: $\Ebb_k$-algebras inside $\PrLU$ and $\CatVinftycocom$ are (respectively) presentable and cocomplete categories endowed with an $\Ebb_k$-monoidal structure that commutes with colimits separately in each variable. In order to streamline our notations, in the rest of our paper we shall refer to an $\Ebb_k$-algebra in $\PrLU$ as a \textit{presentably $\Ebb_k$-monoidal category}, and to an $\Ebb_k$-algebra in $\CatVinftycocom$ as a \textit{cocompletely $\Ebb_k$-monoidal category}; in the case $k=\infinity$ we shall simply write \textit{symmetric monoidal} in place of $\Einf$-monoidal.
\item In a similar fashion, for any $k\in\NN_{\geqslant1}\cup\left\{\infinity\right\}$ and a cocompletely (resp. presentably) $\Ebb_k$-monoidal \infinity-category $\scrA$, we shall say that a category $\scrC$ is \textit{cocompletely} (resp. \textit{presentably}) left tensored over $\scrA$ if it is a left $\scrA$-module in $\CatVinftycocom$ (resp. in $\PrLU$). This formula amounts to the datum of a cocomplete (or presentable) category $\scrC$ which is left tensored over $\scrA$ in such a way that the tensor action functor commutes with colimits separately in each argument.
\item We shall deal with higher (i.e., $n$-)categories, and in particular with $(n+1)$-categories of (possibly decorated) $n$-categories: we follow the conventions introduced and adopted in \cite{Pascaleff_Pavia_Sibilla_Local_Systems}. We shall denote $n$-categories with a bold font, and in order to avoid confusion concerning the ``categorical height'' we are working at, we shall adopt the following highly non-standard notation as well: if we want to refer to the (very large) higher category of large $m$-categories seen as a $n$-category, we shall write $n\CatVtwoinfty[m]$. In the particular case $n=1$, we shall drop both the bold font and the $1$ before our notations, and simply write $\CatVinfty[m]$. For example, $3\CatVtwoinfty[2]$ is the very large $3$-category of all large $2$-categories, while $2\CatVtwoinfty[2]$ is its underlying $2$-category, and $\CatVinfty[2]$ is its underlying $1$-category.
\item Most of the times we will consider categories which are enriched over some preferred category (e.g., modules in spectra which are enriched over themselves, or presentably enriched categories which are enriched over themselves, and so forth). At the same time, we will need to consider the underlying spaces of maps between objects in such categories. For this reason, when $\scrC$ is enriched over a category $\scrA$, we will denote as $\Map_{\scrC}(-,-)$ the space of maps in $\scrC$, and as $\Mapin_{\scrC}(-,-)$ 
the morphism object of $\scrA$ providing the enrichment, so as to to highlight whether we are seeing a morphism object as a space or as something more structured. If $\scrC$ is a higher category of categories (e.g., $\scrC=\CatVinftycocom$ or $\scrC=\PrLU$) we will also use $\Funin(-,-)$, possibly with decorations, to mean the category of structure-preserving functors which serves as the internal object of morphisms in $\scrC$.
\end{itemize}
\subsection*{Acknowledgements}
This project took quite a long time to complete, and along the way we benefited enormously from conversations and email exchanges with friends and colleagues that helped us navigate the many subtle issues in the theory of Koszul duality. We owe special thanks to Francesco Battistoni, Ivan Di Liberti, John Francis, Andrea Gagna, Yonatan Harpaz, David Nadler, Guglielmo Nocera, Mauro Porta, Pavel Safronov, German Stefanich. We are deeply grateful to an anonymous referee for their  many useful comments and for highlighting a gap in the proof of \cref{lemma:locfullyfaithful}.\\
The second author wishes to dedicate this work to the loving memory of his father, who passed away during the completion of this project.
\addtocontents{toc}{\protect\setcounter{tocdepth}{2}}
\section{Main setup}
\label{mainsetup}
In this section we collect the main constructions and definitions concerning presentable $n$-categories (\cite{stefanich2020presentable}) and $n$-affineness of stacks (\cite{1affineness,stefanichthesis}). 
We discuss the main properties of presentable $n$-categories and of local systems of presentable $n$-categories over spaces in \cref{subsec:higher_cat}; we will then focus on the concept of $n$-affineness in \cref{subsec:n_affineness}. 

\subsection{Miscellanea on higher categories}
\label{subsec:higher_cat}
In this section, we fix our notations and collect the main results established in \cite{Pascaleff_Pavia_Sibilla_Local_Systems} concerning presentable $n$-categories and local systems of presentable $n$-categories. 
We try to avoid the subtler technicalities and try to convey the main ideas -- that is, that presentable $n$-categories provide the right generalization of the concept of presentable categories to the $n$-categorical setting, and that local systems of presentable $n$-categories are controlled by monodromy data just like ordinary local systems.

In what follows, $\kappa_0$ is the first large cardinal of our theory (i.e., the cardinal such that $\kappa_0$-small objects are precisely ordinary small objects).
\begin{parag}
Let $\scrA$ be a presentably symmetric monoidal category. Following Stefanich \cite{stefanich2020presentable}, we can define for any $n\geqslant1$ an $(n+1)$-category of presentable $\scrA$-linear $n$-categories that we denote as $(n+1)\LinAPrLUtwo[n]$. Its underlying category is explicitly constructed as follows: if $\CatVinftycocom[n]$ denotes the category of cocomplete $n$-categories, then the underlying category $\LinAPrLU[n]$ is the sub-category of $\kappa_0$-compact objects inside the cocomplete category of iterated modules over $\scrA$ inside $\CatVinftycocom[n]$. More explicitly:
\[
\LinAPrLU[n]\coloneqq\Mod_{n\LinAPrLUtwotiny[n-1]}{\lp\CatVinftycocom[n]\rp}^{\kappa_0}.
\]
Using the fact that $n\LinAPrLUtwo[n-1]$ is itself an $n$-category, we can enhance  $\LinAPrLU[n]$ to an  $(n+1)$-category, which we denote $(n+1)\LinAPrLUtwo[n]$.\\

When $\scrA=\Mod_{\Bbbk}$ is the category of modules over a commutative ring spectrum $\Bbbk$, we shall simply write $(n+1)\LinkPrLUtwo[n]$.
\end{parag}
\begin{fact}[{\cite[Section $5$]{stefanich2020presentable}}]
\label{fact:properties}
The $(n+1)$-category $(n+1)\LinAPrLUtwo[n]$ enjoys the following properties.
\begin{factenum}
\item \label{fact:prl2=prl}For $n=1$, this is just the $2$-categorical incarnation of the category of $\scrA$-modules inside the category $\PrLU$ of presentable categories. In particular, this agrees with the usual notion of presentably $\scrA$-linear categories.
\item\label{fact:prln_symm_monoidal}For all $n\geqslant1$, the $(n+1)$-category $(n+1)\LinAPrLUtwo[n]$ is a commutative algebra object inside the symmetric monoidal category $\smash{\widehat{\mathrm{Cat}}}_{(\scriptstyle\infty,n+1)}^{\mathrm{rex}}$. 
This means that it admits all colimits and it is equipped with a symmetric monoidal structure which preserves them separately in each variable. The unit of such symmetric monoidal structure is provided by the presentably $\scrA$-linear $n$-category $n\LinAPrLUtwo[n-1]$.
\item\label{fact:limits_of_presentable_n} It admits all limits of left adjointable diagrams. That is, given a diagram of presentably $\scrA$-linear $n$-categories
\[
I\longrightarrow(n+1)\LinAPrLUtwo[n]
\]
such that every arrow $i\to j$ in $I$ yields an $n$-functor $n\bm{\scrC}(i)\to n\bm{\scrC}(j)$ which admits a left adjoint, then the limit of such diagram exists and agrees with the colimit of opposite diagram
\[
I^{\op}\longrightarrow(n+1)\LinAPrLUtwo[n]
\]
where each $n$-functor is replaced with its left adjoint.
\item \label{fact:presentable_n_fold_modules}For $A$ an $\Ebb_k$-algebra object in a presentably symmetric monoidal category $\scrA$, and for a presentable category $\scrC$ tensored over $\scrA$, for all integers $0\leqslant n\leqslant k-1$ there exists an $(n+1)$-category of $n$-fold categorical $A$-modules inside $\scrC$ inductively defined as
\[
(n+1)\mathbf{LMod}^n_A(\scrC)\coloneqq(n+1)\LModtwo_{n\LModtwo^{n-1}_A(\scrC)}{\lp(n+1)\LinAPrLUtwo[n]\rp}.
\]
When $n=0$, we shall interpret $1\LModtwo^0_A(\scrC)$ to be the ordinary category $\LMod_A(\scrC)$ of left $A$-modules inside $\scrC$. For all integers $n$ as above, this is again a presentable $\scrA$-linear $(n+1)$-category.\\
When $A$ is a $\Ebb_k$-ring spectrum and $\scrA=\scrC=\Sp$ is the category of spectra, we shall simply write $(n+1)\LModtwo^n_A$ instead of $(n+1)\LModtwo^n_A(\Sp)$.
\end{factenum}
\end{fact}
\begin{parag}
Recall now from \cite[Section $1$]{Pascaleff_Pavia_Sibilla_Local_Systems} that for any small space $X$ and for any cocomplete category $\scrC$ we have a well defined category of \textit{$\scrC$-valued local systems on $X$}
\[
\LocSys(X;\scrC)\coloneqq\Fun(X,\hsp\scrC),
\]
which is equivalent to the category of hypersheaves over $X$ which are locally hyperconstant (\cite{haineportateyssier}). Applying this machinery to the cocomplete category $(n+1)\LinAPrLUtwo[n]$, we obtain an $(n+1)$-category of local systems of presentable $\scrA$-linear $n$-categories that we denote as
\[
(n+1)\LocSysCattwo^n(X;\scrA)\coloneqq(n+1)\Funtwo{\lp X,\hsp (n+1)\LinAPrLUtwo[n]\rp}.
\]
When $\scrA\coloneqq\Mod_{\Bbbk}$ is the category of modules over some commutative ring spectrum $\Bbbk$, we shall denote such $(n+1)$-category simply as $(n+1)\LocSysCattwo^n(X;\Bbbk)$.
\end{parag}
\begin{parag}
Whenever a category $\scrA$ is cocomplete, it is naturally tensored over the category of spaces: the action of a topological monoid $G$ on an object $A$ of $\scrA$ is expressed through the colimit of the diagram of shape $G$ and constant value in $A$. It is known that for a presentable category $\scrC$, local systems over a connected space $X$ with values in $\scrC$ are equivalently described as $\Omega_*X$-modules inside $\scrC$. This can be generalized to  cocomplete categories and yields the equivalence
\begin{align}
\label{lemma:koszulduality}
\LocSys(X;\scrC)\simeq\LMod_{\Omega_*X}(\scrC).
\end{align} 
for any cocomplete category $\scrC$. 
\end{parag}
The main result of \cite{Pascaleff_Pavia_Sibilla_Local_Systems} relates $n$-categorical local systems on an $n$-connected space $X$ and monodromy data, expressed as an $(n+1)$-fold $\Omega_*^{n+1}X$-module structure on the fiber.
\begin{theorem}[{\cite[Theorem $3.2.24$]{Pascaleff_Pavia_Sibilla_Local_Systems}}]
	\label{conj:infinityn}
Let $n\geqslant 1$ be an integer, let $X$ be a pointed $n$-connected space (i.e., $\pi_k(X)\cong 0$ for every $k\leqslant n$), and let $\scrA$ be a presentably symmetric monoidal category. Then there exist equivalences of $(n+1)$-categories\begin{align*}
(n+1)\LocSysCattwo^n(X;\scrA)&\simeq(n+1)\mathbf{LMod}_{\Omega_*X}{\lp(n+1){\mathbf{Lin}_{\scrA}\mathbf{Pr}^{\mathrm{L}}}_{({\scriptstyle\infty}, n)}\rp}\\&\simeq
(n+1)\mathbf{LMod}^n_{\Omega_*^{n+1}X}(\scrA).
\end{align*}
\end{theorem}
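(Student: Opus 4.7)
The plan is to prove the two equivalences in sequence, both leveraging the fact that $(n+1)\LinAPrLUtwo[n]$ is cocomplete inside the ambient $(n+2)$-category by \cref{fact:prln_symm_monoidal} and \cref{fact:limits_of_presentable_n}, hence naturally tensored over the category of spaces.

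For the first equivalence, I would establish, as a categorified monodromy theorem, that for any pointed connected space $Y$ and any $(n+1)$-category $\bm{\scrC}$ cocomplete in the ambient sense, there is a natural equivalence
\[
(n+1)\Funtwo{\lp Y,\hsp\bm{\scrC}\rp}\simeq \mathbf{LMod}_{\Omega_*Y}(\bm{\scrC}),
\]
which is the direct upgrade of \eqref{lemma:koszulduality}. The proof writes $Y\simeq B\Omega_*Y$ as the geometric realization of the bar construction on $\Omega_*Y$, applies $\Funtwo(-,\bm{\scrC})$ to convert the resulting colimit into a cosimplicial limit, and invokes Barr-Beck monadicity for the augmentation $\Omega_*Y\to *$ to identify the totalization with $\mathbf{LMod}_{\Omega_*Y}(\bm{\scrC})$. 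Setting $Y=X$ and $\bm{\scrC}=(n+1)\LinAPrLUtwo[n]$ gives the first asserted equivalence.

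For the second equivalence, I proceed by induction on $n$. The point is to identify, as $\Ebb_1$-algebras inside $(n+1)\LinAPrLUtwo[n]$, the image of the grouplike $\Ebb_1$-monoid $\Omega_*X$ under the canonical tensoring $\scrS\to (n+1)\LinAPrLUtwo[n]$ with the iterated module category $n\LModtwo^{n-1}_{\Omega^{n+1}_*X}(\scrA)$ from \cref{fact:presentable_n_fold_modules}. The symmetric monoidal functor $\scrS\to n\LinAPrLUtwo[n-1]$ sending $S\mapsto n\LocSysCattwo^{n-1}(S;\scrA)$ sends $\Omega_*X$ to $n\LocSysCattwo^{n-1}(\Omega_*X;\scrA)$; since $X$ is $n$-connected, $\Omega_*X$ is $(n-1)$-connected, so the inductive hypothesis applied to $\Omega_*X$ identifies this with $n\mathbf{LMod}^{n-1}_{\Omega^{n+1}_*X}(\scrA)$ as $\Ebb_1$-algebras in $(n+1)\LinAPrLUtwo[n]$. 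Combined with the first equivalence, this yields the second asserted equivalence.

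The hardest step will be the bookkeeping of monoidal structures across the inductive levels: one must verify not only the equivalence of underlying $(n+1)$-categories but the compatibility of the $\Ebb_1$-algebra structure on $\Omega_*X$ (inherited from its grouplike structure in spaces) with the one on $n\LModtwo^{n-1}_{\Omega^{n+1}_*X}(\scrA)$ (extracted from the $\Ebb_{n+1}$-structure on $\Omega^{n+1}_*X$ via Dunn additivity). The base case $n=1$ is itself nontrivial, requiring one to unwind the $\Ebb_1$-structure on $\LModtwo_{\Omega^2_*X}(\scrA)$ inside $2\LinAPrLUtwo[1]$ and match it with the tensoring action of $\Omega_*X$ on presentable $\scrA$-linear categories, a form of \textit{group algebra} recognition at the categorified level.
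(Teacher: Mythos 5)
Your plan is correct and essentially coincides with the proof in the companion paper from which this theorem is imported: the first equivalence is the cocomplete monodromy equivalence \eqref{lemma:koszulduality} applied to $\bm{\scrC}=(n+1)\LinAPrLUtwo[n]$, and the second is obtained by rectifying the $\Omega_*X$-action to an action of $\Omega_*X\otimes\boldone\simeq n\LocSysCattwo^{n-1}(\Omega_*X;\scrA)$ with the Day convolution product (\cref{lemma:rectificationofmodules}) and then inducting on the based loop space. The one point you rightly single out---that the inductive hypothesis must be carried as an equivalence of $\Ebb_1$-algebras in $(n+1)\LinAPrLUtwo[n]$, compatible via Dunn additivity with the residual monoidal structures, rather than merely of underlying $n$-categories---is exactly where the real work lies, so the induction has to be set up multiplicatively from the start.
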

\begin{remark}[{\cite[Lemma $2.2$]{Pascaleff_Pavia_Sibilla_Local_Systems}}]
\label{lemma:rectificationofmodules}
When the category $\scrA$ is cocompletely symmetric monoidal with unit $\boldone_{\scrA}$ then the action of the space $G$ can be expressed in terms of an algebra inside $\scrA$. Indeed, an object $A$ of $\scrA$ is a left $G$-module if and only if it is a left $(G\otimes\boldone_{\scrA})$-module, where $\otimes$ denotes the cocontinuous action of $\scrS$ over $\scrA$.

For example, when $\scrA=\Mod_{\Bbbk}$ is the category of $\Bbbk$-modules over a commutative ring spectrum $\Bbbk$, then the $\Bbbk$-algebra $G\otimes \Bbbk$ agrees with the $\Bbbk$-algebra of $\Bbbk$-valued chains $\mathrm{C}_{\bullet}(G;\Bbbk)$ with its Pontrjagin product. Therefore, a left $G$-module structure on a $\Bbbk$-module $M$ is the same as a $\mathrm{C}_{\bullet}(G;\Bbbk)$-module structure.

When $\scrA=\LinkPrLU[n]$ is the category of presentable $\Bbbk$-linear $n$-categories, then the presentable monoidal $n$-category $G\otimes n\LinAPrLUtwo[n-1]$ is equivalent to the $n$-category $n\LocSysCattwo^{n-1}(G;\Bbbk)$ of presentable $\scrA$-linear local systems over $G$, equipped with the Day convolution tensor product (\cite[Lemma $3.2.28$]{Pascaleff_Pavia_Sibilla_Local_Systems}).

Applying these observations to \cref{conj:infinityn}, we see that when $X$ is $n$-connected then one can express local systems of presentable $\Bbbk$-linear $n$-categories over $X$ as presentable $\mathrm{C}_{\bullet}(\Omega^{n+1}_*X;\Bbbk)$-linear $n$-categories.
\end{remark}
For future reference, we also report the following key result on the ambidexterity of limits and colimits of presentable $n$-categories indexed over groupoids/spaces.
\begin{lemma}[{\cite[Lemma $3.2.29$]{Pascaleff_Pavia_Sibilla_Local_Systems}}]
\label{lemma:limitsandcolimitsovergroupoids}
Let $\scrA$ be a presentably symmetric monoidal category, let $X$ be a space, and let $F\colon X\to(n+1)\LinAPrLUtwo[n]$ be a diagram of presentable $\scrA$-linear $n$-categories of shape $X$. Then, there is a natural equivalence $\lim F\simeq\colim F$ in $(n+1)\LinAPrLUtwo[n]$.
\end{lemma}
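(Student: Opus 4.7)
The plan is to exploit the fact that $X$ is an $\infinity$-groupoid, so every morphism in $X$ is invertible, together with \cref{fact:limits_of_presentable_n} characterizing limits of left-adjointable diagrams in $(n+1)\LinAPrLUtwo[n]$ as colimits of the pointwise-left-adjoint diagram. The ambidexterity should then follow from the observation that for a diagram indexed by a groupoid, the "pointwise left adjoint" diagram is canonically equivalent to the original one.

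First I would verify that $F$ is left-adjointable. For each morphism $f\colon x\to y$ in $X$, the functor $F(f)\colon F(x)\to F(y)$ is an equivalence of presentable $\scrA$-linear $n$-categories, because $X$ is a space and hence every morphism is invertible, with inverse $F(f^{-1})$. An equivalence in $(n+1)\LinAPrLUtwo[n]$ is in particular a left adjoint (its inverse provides both a left and a right adjoint), so $F$ satisfies the hypothesis of \cref{fact:limits_of_presentable_n}. Consequently $\lim F$ exists, and it is computed as the colimit of the opposite diagram
\[
F^L\colon X^{\op}\longrightarrow (n+1)\LinAPrLUtwo[n]
\]
obtained by replacing each $F(f)$ with its left adjoint.

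Next I would identify $F^L$ with $F$ under the canonical equivalence $X\simeq X^{\op}$ coming from the fact that $X$ is a groupoid. Concretely, the inversion autoequivalence $(-)^{-1}\colon X\xrightarrow{\sim}X^{\op}$ sends a morphism $f$ to $f^{-1}$, and the left adjoint of the equivalence $F(f)$ is precisely its inverse $F(f)^{-1}\simeq F(f^{-1})$. Hence the composite
\[
X \xrightarrow{\;(-)^{-1}\;} X^{\op} \xrightarrow{\;F^L\;} (n+1)\LinAPrLUtwo[n]
\]
is naturally equivalent to $F$ itself. Passing to colimits, this yields
\[
\colim_{X^{\op}} F^L \;\simeq\; \colim_X F,
\]
and combining with the previous step we conclude $\lim F\simeq \colim F$.

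The only real subtlety is making rigorous the naturality of the identification $F^L\circ (-)^{-1}\simeq F$ in the $(\infinity,n+1)$-categorical setting; that is, checking that the straightening of the cocartesian fibration classified by $F$ and that of its pointwise-adjoint cousin agree after pullback along the involution of $X$. This can be handled by invoking the general formalism of $(\infinity,2)$-categorical adjunctions and straightening recorded in \cite{stefanich2020presentable}, together with the fact that an adjunction between equivalences is automatically an adjoint equivalence. With this step in place, the rest of the argument is formal, and the proof generalizes the familiar ambidexterity of groupoid-indexed (co)limits in $\PrLU$ to every level of the categorical hierarchy.
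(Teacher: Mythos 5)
Your proof is correct and follows exactly the route the paper intends: the statement is imported from the companion work, and the only ingredient the present paper supplies for it is \cref{fact:limits_of_presentable_n}, which you combine with the canonical identification $X\simeq X^{\op}$ for a groupoid and the observation that the left adjoint of an equivalence is its inverse. The coherence point you flag (identifying $F^L\circ(-)^{-1}$ with $F$ naturally) is indeed the only nontrivial step, and it is handled as you indicate by the adjoint-passing formalism restricted to diagrams landing in equivalences.
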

\subsection{The notion of \texorpdfstring{$n$}{n}-affineness}
\label{subsec:n_affineness}
Here we introduce the notion of $n$-affineness for arbitrary (pre)stacks. We shall use our results in this section as a stepping stone for our study of general $n$-affineness properties of Betti stacks in \cref{sec:naffineness1} below.   We refer the reader to the Introduction for a thorough discussion of the relationship between $n$-affineness and higher Koszul duality.  Before introducing the general case for arbitrary $n$, we start by reviewing  basic definitions and results established in \cite{1affineness}.

\begin{construction}[{\cite[Section $1.1$]{1affineness}}]
	\label{constr:1affine}
Let $\Bbbk$ be an $\Einf$-ring spectrum, and denote by $\operatorname{Aff}_{\Bbbk}$ the category of affine schemes over $\Bbbk$, i.e., the opposite category of the category $\smash{\CAlg_{\Bbbk}^{\geqslant0}}$ of connective and commutative $\Bbbk$-algebras. Let $\mathrm{PSt}_{\Bbbk}$ be the category of \textit{prestacks over $\Bbbk$}, i.e., the category of accessible presheaves over the category $\operatorname{Aff}_{\Bbbk}.$ The functor$$\operatorname{ShvCat}\colon\mathrm{PSt}_{\Bbbk}\longrightarrow\operatorname{Lin}_{\Bbbk}\CatVinftycocom$$is by definition the right Kan extension of the functor$$\operatorname{Lin}_{(-)}\PrLU\colon\operatorname{Aff}_{\Bbbk}^{\op}\simeq\CAlg^{\geqslant0}_{\Bbbk}\longrightarrow\operatorname{Lin}_{\Bbbk}\CatVinftycocom$$along the Yoneda embedding $\yo\colon\operatorname{Aff}_{\Bbbk}^{\op}\to\operatorname{PSt}^{\op}_{\Bbbk}$. 
\begin{defn}
\label{def:shvcat}
Let $\mathscr{X}$ be a prestack. Then the category $\operatorname{ShvCat}(\mathscr{X})$ is the \textit{category of quasi-coherent sheaves of ($\Bbbk$-linear) categories over $\mathscr{X}$}. 
\end{defn}

If $\mathscr{F}$ is a quasi-coherent sheaf of categories over $\mathscr{X}$, we have a well defined functor$$\Gamma{\lp-,\hsp\mathscr{F}\rp}\colon\lp\operatorname{Aff}_{\Bbbk/\mathscr{X}}\rp^{\op}\longrightarrow\LinkPrLU$$which we can right Kan extend to get the functor$$\Gamma{\lp-,\hsp\mathscr{F}\rp}\colon\operatorname{PSt}_{\Bbbk/\mathscr{X}}^{\op}\longrightarrow\LinkPrLU.$$By fixing the prestack to be $\mathscr{X}$ itself, for any quasi-coherent sheaf of categories over $\mathscr{X}$ the $\Bbbk$-linear category of its global section is actually acted on by the stable category $\Qcoh(\mathscr{X})$. Hence, we deduce the existence of a \textit{global section functor}
	\begin{align}
	\label{functor:globalsections}
	\Gamma^{\operatorname{enh}}(\mathscr{X},-)\colon\operatorname{ShvCat}(\mathscr{X})\longrightarrow\operatorname{Lin}_{\Qcoh(\mathscr{X})}\PrLU
	\end{align} which is right adjoint to the sheafification functor
\begin{align}
\label{functor:sheafification}
\operatorname{Loc}_{\mathscr{X}}\colon\operatorname{Lin}_{\Qcoh(\mathscr{X})}\PrLU\longrightarrow\operatorname{ShvCat}(\mathscr{X}).
\end{align}
The latter acts on objects by sending a presentably $\Qcoh(\mathscr{X})$-linear category $\scrC$ to the quasi-coherent sheaf of categories obtained by sheafifying the assignment  $$\Spec(S)\mapsto \Qcoh(S)\otimes_{\Qcoh(\mathscr{Y})}\scrC.$$
\end{construction}

\begin{defn}[{\cite[Definition $1.3.7$]{1affineness}}]
	\label{def:1affine}
A prestack $\mathscr{X}$ is \textit{$1$-affine} if $\Gamma^{\operatorname{enh}}(\mathscr{X},-)$ and $\operatorname{Loc}_{\mathscr{X}}$ are mutually inverse equivalences.
\end{defn}
\begin{remark}
\label{remark:on0affineness}
\cref{def:1affine} has to be interpreted as a generalization of affineness, in the following sense. When $X=\Spec (R)$ is an affine scheme, then there is a canonical equivalence of stable categories
\begin{align}
\label{eq:0affine}
\QCoh(X) \simeq \Mod_{\Gamma(X,\scrO_X)}.
\end{align}
In \cite{1affineness}, stacks for which the equivalence \eqref{eq:0affine} holds are called \textit{weakly $0$-affine}. Let us remark that, actually, the class of weakly $0$-affine stacks (in this sense) sits between the class of affine schemes and an even weaker notion of $0$-affineness. Indeed, if $\scrX$ is an arbitrary stack one could define a notion of $0$-affineness by asking that the global sections functor
\[
\Gamma(\scrX,-)\colon\QCoh(\scrX)\longrightarrow\Mod_{\Bbbk}
\]
is monadic: let us call a stack $\scrX$ that satisfies this condition    \emph{almost 0-affine}. If $\scrX$ is a weakly $0$-affine stack in the sense of Gaitsgory, then the global sections are trivially monadic over $\Mod_{\Bbbk}$: in virtue of the Schwede--Shipley recognition principle for stable categories of modules in spectra (\cite[Proposition $7.1.2.6$]{ha}), if $\scrX$ is weakly $0$-affine  then $\scrO_{\scrX}$ has to be a compact generator of $\QCoh(\scrX)$. This means that the functor
\[\Gamma(\scrX,-)\simeq\Mapin_{\QCoh(\scrX)}{\lp\scrO_{\scrX},-\rp}\]
has to reflect equivalences and preserve all colimits. However, this latter condition is \textit{weaker}. for $\Gamma(\scrX,-)$ to be monadic it is sufficient that it reflects equivalences and preserves only a special class of colimits -- namely, colimits of $\Gamma(\scrX,-)$-split simplicial objects (\cite[Theorem $4.7.3.5$]{ha}). This implies that if $\scrX$ is almost $0$-affine then $\scrO_{\scrX}$ has to be a generator (although this is not a sufficient condition): but it might very well fail to be a \emph{compact} generator. 

 A particularly easy example is the following: take $\scrC$ to be a countable product of copies of $\Mod_{\Bbbk}$, which can be interpreted as the category of local systems over the discrete space $\ZZ$. \cref{lemma:connected} implies that the functor of global sections is monadic, yet the structure sheaf $\scrO_{\ZZ}$ (which corresponds to the constant sequence $(\Bbbk)_{n\in\ZZ}$) is not compact. Indeed,  the global  sections functor is equivalent to the functor
\[
(M_n)_{n\in\ZZ}\mapsto\Mapin_{\scrC}(\scrO_{\ZZ},M_n)\simeq\prod_{n\in\ZZ}\Mapin_{\Bbbk}(\Bbbk,M_n)\simeq\prod_{n\in\ZZ}M_n,
\]
and in general infinite colimits do not commute with infinite products in stable categories. Another non-trivial example of the difference between these two notions is provided by $\CC\PP^{\scriptstyle\infty}$ (\cref{cor:BCPinfty}). 

This issue  persists in the categorified setting. This means that \cref{def:1affine} has to be interpreted as a ``strong'' notion of $1$-affineness, as is a direct categorification of  Gaitsgory's  notion of $0$-affineness. By the same token,  we could define \textit{almost} $1$-affineness by requiring the mere monadicity of the global sections functor. These two notions are, in general, genuinely different. However, as we shall explain in \cref{porism:conservativity} below, for Betti stacks the situation is  simpler,  as almost $1$-affineness implies $1$-affineness. This will entail a significant simplification of some of our arguments.  
\end{remark}
We now present the general definition of $n$-affineness for arbitrary $n$. For any $n\geqslant 0$, we can construct a functor\begin{align*}(n+1)\Modtwo_{(-)}^n\colon\mathrm{Aff}_{\Bbbk}^{\op}\simeq\CAlg_{\Bbbk}^{\geqslant0}&\longrightarrow \LinkPrLU[n+1]\\\Spec(R)&\mapsto (n+1)\Modtwo^{n}_R\end{align*}sending an affine scheme $\Spec(R)$ to its $(n+1)$-category of $n$-fold $R$-modules. When $n\geqslant1$, this is another name for the $(n+1)$-category of $R$-linear presentable $n$-categories of \cref{fact:presentable_n_fold_modules}.
\begin{defn}[{\cite[Definition $14.2.4$]{stefanichthesis}}]
\label{def:nqcoh}
Let $\scrX$ be a prestack defined over a commutative ring spectrum $\Bbbk$, and let $n\geqslant1$ be an integer. The \textit{$(n+1)$-category $(n+1)\ShvCattwo^n(\scrX)$ of quasi-coherent sheaves of ($\Bbbk$-linear presentable) $n$-categories} over $\scrX$ is defined as the right Kan extension of the functor $(n+1)\Modtwo^{n}_{(-)}$ along the inclusion $\mathrm{Aff}^{\op}_{\Bbbk}\subseteq\PSt_{\Bbbk}^{\op}.$
\end{defn}
This means that $(n+1)\ShvCattwo^n(\scrX)$ is the limit computed inside $\LinkPrLUtwo[n+1]$ $$(n+1)\ShvCattwo^n(\scrX)\coloneqq\lim_{\substack{\Spec(R)\to\scrX\\ R\in\CAlg^{\geqslant0}_{\Bbbk}}}(n+1)\Modtwo^{n}_R\simeq\lim_{\substack{\Spec(R)\to\scrX\\ R\in\CAlg^{\geqslant0}_{\Bbbk}}}(n+1)\mathbf{Lin}_R\PrLUtwo[n].$$In particular, \cref{def:nqcoh} agrees with \cref{constr:1affine} when $n=1$.
\begin{notation}
\label{notation:when_n_is_zero}
Let $\scrX$ be any stack over $\Bbbk$. In order to  streamline our exposition, in the following it will be convenient to make sense of the notation $(n+1)\ShvCattwo^n(\scrX)$ also for $n=0$. In this case, it will be understood as $\QCoh(\scrX)$.
\end{notation}
Just like for the case $n=1$, we have a naturally defined global sections $(n+1)$-functor$$(n+1)\boldsymbol{\Gamma}(\scrX,-)\colon (n+1)\ShvCattwo^n(\scrX)\longrightarrow (n+1)\LinkPrLUtwo[n]$$which for any quasi-coherent sheaf of $n$-categories $n\bm{\scrF}$ over $\scrX$ computes the limit over all local sections$$\lim_{\substack{\Spec(R)\to\scrX\\ R\in\CAlg^{\geqslant0}_{\Bbbk}}}(n+1)\boldsymbol{\Gamma}(\Spec(R),n\bm{\scrF})$$inside $(n+1)\LinkPrLUtwo[n]$.
\begin{defn}
\label{def:naffineness}
Let $\scrX$ be a prestack, and let $n\geqslant1$ be an integer. We say that $\scrX$ is \textit{$n$-affine} if the global sections $(n+1)$-functor$$(n+1)\boldsymbol{\Gamma}(\mathscr{X},-)\colon (n+1)\ShvCattwo^n(\scrX)\longrightarrow (n+1)\LinkPrLUtwo[n]$$is a monadic morphism in the $(\infinity,2)$-category $(n+2)\LinkPrLUtwo[n+1]$ of $\Bbbk$-linear presentable $(n+1)$-categories.
\end{defn}
\begin{remark}
\label{remark:shvcatleftrightkan}
For $n\geqslant2$, the right Kan extension defining the $(n+1)$-category of quasi-coherent sheaves of $n$-categories in \cref{def:nqcoh} is also a \textit{left} Kan extension. More generally: let $f\colon\scrX\to\scrY$ be a morphism of prestacks over a commutative ring spectrum $\Bbbk$, and let $n\geqslant2$ be an integer. Then the pullback functor $f^*\colon\ShvCat^n(\scrY)\to\ShvCat^n(\scrX)$ is part of an ambidextrous adjunction (\cite[Corollary $14.2.10$]{stefanichthesis}). In particular, \cite[Theorem $5.5.14$]{stefanich2020presentable} implies that the limit inside $(n+2)\LinkPrLUtwo[n+1]$ along the pullback $(n+1)$-functors corresponds to the colimit inside $(n+2)\LinkPrLUtwo[n+1]$ along the colimit-preserving pushforward $(n+1)$-functors. This also holds for $n=1$ if the morphism $f$ is assumed to be affine schematic.

So, let $n\geqslant2$ and $f$ be as above. Then the $(n+1)$-functor $f_\ast$ sends a quasi-coherent sheaf of $n$-categories $n\bm{\scrF}$ over $\scrX$ to the quasi-coherent sheaf of $n$-categories over $\scrY$ whose local sections on an affine scheme $\Spec(R)$ over $\scrY$ are described by the formula
\[
n\bm{\Gamma}(\Spec(R),f_\ast(n\bm{\scrF}))\simeq\lim_{\substack{\Spec(S)\to\Spec(R)\times_{\scrY}\scrX}}n\bm{\Gamma}(\Spec(S),n\bm{\scrF}).
\]
Such limit is computed along pullback $(n+1)$-functors, which are both right and left adjoint to the pushforward $(n+1)$-functors. So, it can be equivalently computed as the \textit{colimit} along the latter, i.e.,
\[
n\bm{\Gamma}(\Spec(R),f_\ast(n\bm{\scrF}))\simeq\colim_{\substack{\Spec(S)\to\Spec(R)\times_{\scrY}\scrX}}n\bm{\Gamma}(\Spec(S),n\bm{\scrF}).
\]
Both the above limit and colimit are computed inside $(n+1)\LinkPrLUtwo[n].$
\end{remark}
\begin{remark}
\label{remark:monadicity}
Thanks to \cite[Proposition $14.3.6$]{stefanichthesis}, we know that for $n\geqslant2$ the monadicity requirement for $n$-affineness can be checked at the level of the underlying categories -- i.e., the $(n+1)$-functor $(n+1)\boldsymbol{\Gamma}(\scrX,-)$ is a monadic morphism of presentable $\Bbbk$-linear $(n+1)$-categories if and only if the underlying functor of ordinary categories$$\Gamma(\scrX,-)\colon\ShvCat^n(\scrX)\longrightarrow\LinkPrLU[n]$$is monadic. If $n=1$, \cref{def:naffineness} recovers \cref{def:1affine} (\cite[Remark $14.3.8$]{stefanichthesis}). As explained in \cref{remark:on0affineness}, this is is a stronger requirement than asking simply for the monadicity of the global sections functor: indeed, this is equivalent to asking for it to be a \textit{colimit-preserving} monadic right adjoint.

The weak and strong notion of $n$-affineness  in fact coincide as soon as  $n\geqslant 2$. That is, when $n \geqslant 2$, if  the global sections functor is monadic, it is automatically a monadic morphism in  $(n+2)\LinkPrLUtwo[n+1]$.  Hence, it induces a $\Bbbk$-linear equivalence of presentable $(n+1)$-categories
\[
(n+1)\ShvCattwo^n(\scrX)\simeq(n+1)\Modtwo_{n\ShvCattwo^{n-1}(\scrX)}{\lp(n+1)\PrLUtwo[n]\rp}.
\] 
This follows immediately from the fact that pullbacks and pushforwards between presentable $(n+1)$-categories of quasi-coherent sheaves of $n$-categories form an \textit{ambidextrous} adjunction in $(n+2)\LinkPrLUtwo[n+1]$, as already mentioned in \cref{remark:shvcatleftrightkan}. 
\end{remark}
\section{Betti stacks and \texorpdfstring{$n$}{n}-affineness}
\label{sec:naffineness1}
\subsection{Betti stacks and their sheaves of categories}
\begin{notation}
In this section, unless otherwise specified, $\Bbbk$ always denotes an arbitrary connective commutative ($\Einf$-)ring spectrum. 
\end{notation}
Let $\operatorname{St}_{\Bbbk}$ be the category of \textit{stacks over $\Bbbk$}, i.e., the full subcategory of $\operatorname{PSt}_{\Bbbk}$ spanned by those prestacks which are hypercomplete sheaves with respect to the étale topology over $\operatorname{Aff}_{\Bbbk}$. The natural functor $\operatorname{Aff}_{\Bbbk}\to\left\{*\right\}$ induces a functor$$(-)_{\mathrm{B}}\colon\operatorname{Shv}(\left\{*\right\})\simeq\scrS\longrightarrow\operatorname{St}_{\Bbbk}$$which corresponds to sending a space $X$ to the sheafification of the constant prestack$$\operatorname{Aff}_{\Bbbk}^{\op}\longrightarrow\left\{*\right\}\overset{X}{\longrightarrow}\scrS.$$
\begin{defn}
	\label{def:bettistack}
The functor $(-)_{\mathrm{B}}\colon\scrS\to\operatorname{St}_{\Bbbk}$ is the \textit{Betti stack functor}. 
\end{defn}
\begin{parag}
	\label{parag:qcohbetti}
Betti stacks are intimately linked to the theory of local systems over spaces. Indeed, for every space $X$ we can consider the category $\Qcoh{\lp X_{\mathrm{B}}\rp}$ of quasi-coherent sheaves over its associated Betti stack, and for every affine scheme $\Spec(R)$ over $\Bbbk$ one has a symmetric monoidal equivalence of stable categories$$\Qcoh{\lp X_{\mathrm{B}}\times\Spec(R)\rp}\simeq\LC(X;R),$$where on the right hand side we are considering the point-wise tensor product, as proved in \cite[Proposition $3.1.1$]{portasala}. In particular, taking $R$ to be $\Bbbk$ in the above formula yields$$\Qcoh{\lp X_{\mathrm{B}}\times\Spec(\Bbbk)\rp}\simeq\Qcoh{\lp X_{\mathrm{B}}\rp}\simeq\LC(X;\Bbbk).$$Analogously, at a categorified level, it turns out that quasi-coherent sheaves of categories over $X_{\mathrm{B}}$ recover categorical local systems over $X$.
\end{parag}
\begin{lemma}\label{lemma:shvcatn}
Let $n\geqslant1$. For any space $X$ and for any base commutative ring spectrum $\Bbbk$, we have an equivalence of $(n+1)$-categories$$(n+1)\ShvCattwo^n{\lp X_{\mathrm{B}}\rp}\simeq(n+1)\LocSysCattwo^n{\lp X;\Bbbk\rp}.$$
\end{lemma}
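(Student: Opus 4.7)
The plan is to run verbatim the proof strategy of \cref{lemma:shvcat}, replacing each ingredient with its $(n+1)$-categorical analogue. First, I would present $X$ as a colimit of contractible cells in $\scrS$, namely $X \simeq \colim_{\{*\}\to X} \{*\}$. Since the Betti stack functor $(-)_{\operatorname{B}}\colon\scrS\to\operatorname{St}_{\Bbbk}$ is the left adjoint in a geometric morphism of $\infinity$-topoi, it preserves colimits, so
\[
X_{\operatorname{B}}\simeq\colim_{\Spec(\Bbbk)\to X_{\operatorname{B}}}\Spec(\Bbbk),
\]
where the colimit is understood in $\operatorname{St}_{\Bbbk}$ (not in $\operatorname{PSt}_{\Bbbk}$, as in the proof of \cref{lemma:shvcat}).

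Next, I would apply the functor $(n+1)\ShvCattwo^n$. By \cref{def:nqcoh}, this functor is defined as a right Kan extension along $\operatorname{Aff}_{\Bbbk}^{\op}\subseteq\operatorname{PSt}_{\Bbbk}^{\op}$, and hence by construction it sends colimits of prestacks to limits of $(n+1)$-categories. To deal with the fact that the colimit above is in stacks, I would invoke the $(n+1)$-categorical analogue of \cite[Theorem $1.5.7$]{1affineness} developed in \cite[Section~14.2]{stefanichthesis}: the assignment $(n+1)\ShvCattwo^n$ satisfies étale hyperdescent, and hence factors through the stackification functor $\operatorname{PSt}_{\Bbbk}\to\operatorname{St}_{\Bbbk}$. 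Combining this with the tautological identification $(n+1)\ShvCattwo^n(\Spec(\Bbbk))\simeq(n+1)\LinkPrLUtwo[n]$, one obtains
\[
(n+1)\ShvCattwo^n(X_{\operatorname{B}})\simeq\lim_X(n+1)\LinkPrLUtwo[n].
\]
On the other side, by definition $(n+1)\LocSysCattwo^n(X;\Bbbk) = (n+1)\Funtwo(X,(n+1)\LinkPrLUtwo[n])$, which being indexed by a space computes $\lim_X(n+1)\LinkPrLUtwo[n]$. Matching the two expressions yields the desired equivalence.

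The main obstacle will be the descent step: one must ensure that the étale hyperdescent property for the higher-categorical $(n+1)\ShvCattwo^n$ is available in the precise form needed here. For $n\geqslant 2$ this can be made essentially automatic by exploiting \cref{remark:shvcatleftrightkan}, which identifies the defining right Kan extension with a left Kan extension along pushforward adjoints, so that $(n+1)\ShvCattwo^n$ both preserves colimits of prestacks and (via ambidexterity) computes the required limit. Combined with \cref{lemma:limitsandcolimitsovergroupoids}, which identifies limits and colimits of presentable $n$-categories over spaces, this route makes the identification between the two sides fully transparent without any further subtlety. For $n=1$ the claim already appears as \cref{lemma:shvcat}, so the only real content here is the higher $n$ case, which reduces to citing Stefanich's descent result.
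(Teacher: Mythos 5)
Your proposal is correct and is essentially the paper's own argument: the paper's proof simply observes that $(n+1)\ShvCattwo^n$ is an \'etale sheaf by \cite[Corollary $14.3.5$]{stefanichthesis} and that the proof of \cref{lemma:shvcat} then carries over verbatim. You have merely spelled out the same steps (cell decomposition, descent through stackification, identification with $\lim_X(n+1)\LinkPrLUtwo[n]$) in more detail.
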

\begin{proof}
The functor $(-)_{\mathrm{B}}\colon\scrS\to\operatorname{St}_{\Bbbk}$ is the pullback functor in a geometric morphism between categories of sheaves, hence it obviously commutes with colimits and finite limits. Presenting $X$ as a colimit of its contractible cells, we have equivalences$$X_{\mathrm{B}}\simeq\colim_{\left\{*\right\}\to X} \left\{*\right\}_{\mathrm{B}}\simeq\colim_{\Spec(\Bbbk)\to X_{\mathrm{B}}}\Spec(\Bbbk).$$Since the functor $(n+1)\ShvCattwo^n(-)$ is a sheaf for the étale topology (\cite[Theorem $1.5.7$]{1affineness} when $n=1$, \cite[Corollary $14.3.5$]{stefanichthesis} when $n\geqslant2$), it sends colimits of stacks to limits of $(n+1)$-categories. So, we conclude that$$(n+1)\ShvCattwo^n{\lp X_{\mathrm{B}}\rp}\simeq\lim_{X}\hsp(n+1)\ShvCattwo^n{\lp\Spec(\Bbbk)\rp}\simeq \lim_X \hsp(n+1)\LinkPrLUtwo[n],$$where in the second equivalence we used the fact that every affine scheme is tautologically $n$-affine. On the other hand, we already know that$$(n+1)\LocSysCattwo^n{\lp X;\Bbbk\rp}\simeq\lim_X\hsp(n+1)\LinkPrLUtwo[n],$$hence the two expressions match.
\end{proof}

Combining \cref{lemma:shvcatn} with \cref{conj:infinityn} (and \cref{lemma:rectificationofmodules}), we obtain:
\begin{corollary}
	\label{cor:maincorshvcat}
Let $n\geqslant1$ be an integer. For any $n$-connected space $X$ and any $\Einf$-ring spectrum $\Bbbk$ we have equivalences of $(n+1)$-categories
\begin{align*}
(n+1)\ShvCattwo^n{\lp X_{\mathrm{B}}\rp}&\simeq(n+1)\mathbf{LMod}_{n\LocSysCattwo^{n-1}(\Omega_*X;\Bbbk)}{\lp(n+1)\PrLUtwo[n]\rp}\simeq(n+1)\mathbf{LMod}^n_{\operatorname{C}_{\bullet}(\Omega_*^{n+1}X;\Bbbk)}.
\end{align*}
\end{corollary}

\subsection{\texorpdfstring{$n$}{n}-affineness for Betti stacks}
\label{sec:proof_main_thm}
Our main result in this section is a  characterization of $n$-affine Betti stacks of connected spaces, for any integer $n\geqslant1$, in terms of $(n-1)$-affineness of the Betti stacks of their based loop spaces (\cref{prop:naffineifloopn-1}). We start by proving the following result, which will also be used extensively in \cref{sec:nkoszulcoaffine}.
\begin{proposition}
\label{prop:fullyfaithfulness}
 Let $n\geqslant1$ be an integer. For any space $X$, the underlying functor of the global sections $(n+1)$-functor $$(n+1)\bm{\Gamma}^{\mathrm{enh}}(X_{\operatorname{B}},-)\colon (n+1)\ShvCattwo^n(X_{\operatorname{B}})
 \longrightarrow 
(n+1)\mathbf{Lin}_{n\ShvCattwo^{n-1}(X_{\operatorname{B}})}\PrLUtwo[n]$$
 admits a fully faithful left adjoint $\operatorname{Loc}^n_{X_{\operatorname{B}}}$.
\end{proposition}
\begin{remark}
When $n=1$, then $\mathrm{Loc}^n_{X_{\operatorname{B}}}$ obviously agrees with the functor $\mathrm{Loc}_{X_{\operatorname{B}}}$ of \cite{1affineness} because of the essential uniqueness of left adjoints.
\end{remark}
\begin{proof}[Proof of \cref{prop:fullyfaithfulness}]
Writing $(n+1)\ShvCattwo^n(X_{\operatorname{B}})$ as $(n+1)\LocSysCattwo^n(X;\Bbbk)$, we can consider the following diagram of categories.
$$\begin{tikzpicture}[scale=0.75]
\node (a1) at (-4.5,2){$\displaystyle\LocSysCat^n(X;\Bbbk)$};
\node (a2) at (4.5,2){$\displaystyle\operatorname{Mod}_{n\LocSysCattwo^{n-1}(X;\Bbbk)}{\lp\PrLU[n]\rp}$};
\node (b) at (0,0){$\displaystyle\LinkPrLU[n]$};
\draw[->,font=\scriptsize](a1) to[out=280,in=180] node[below left]{$\displaystyle\Gamma(X,-)$} (b);
\draw[->,font=\scriptsize](a2) to[out=260,in=0]node[below right]{$\displaystyle\oblv_{n\LocSysCattwo^{n-1}(X;\Bbbk)}$} (b);
\draw[->,font=\scriptsize](a1) to node[above]{$\displaystyle\Gamma^{\operatorname{enh}}(X_{\operatorname{B}},-)$}(a2);
\end{tikzpicture}$$Here, $\Gamma(X,-)$ is the functor which takes global sections of a local system of $\Bbbk$-linear presentable $n$-categories: this amounts to taking the limit over the diagram of presentable categories of shape $X$ defined by a local system of categories.

Notice that this diagram does commute. Indeed, under the equivalence of \cref{lemma:shvcatn}, the global sections of a quasi-coherent sheaf of $n$-categories $n\bm{\mathscr{F}}$ over the Betti stack $X_{\operatorname{B}}$ correspond to the ``enhanced'' global sections of the corresponding local system of $n$-categories over $X$, taking into account the natural tensor action of $n\LocSysCattwo^{n-1}(X;\Bbbk)$ over them. This is simply a categorification of the fact that global sections of local systems of $\Bbbk$-modules over a space are endowed with an action of the algebra of its $\Bbbk$-cochains. In particular, forgetting such action recovers the underlying $\Bbbk$-linear presentable $n$-category $n\bm{\Gamma}(X,n\bm{\mathscr{F}})$.

We want to prove that this commutative diagram satisfies the assumptions of \cite[Corollary 4.7.3.16]{ha}. This means the following.
\begin{enumerate}
    \item The functor $\Gamma(X,-)$ is a right adjoint which preserves geometric realizations of $\Gamma(X,-)$-split simplicial objects.
    \item The functor $\oblv_{n\LocSysCattwo^{n-1}(X;\Bbbk)}$ is monadic.
    \item The diagram is vertically left adjointable.
\end{enumerate}
The fact that $\oblv_{n\LocSysCattwo^{n-1}(X;\Bbbk)}$ is monadic is obvious. The fact that $\Gamma(X,-)$ is a right adjoint follows from the fact that it computes the limit over an $n$-functor $X\to n\LinkPrLUtwo[n-1]$, and so its left adjoint is given by taking the constant local system of $n$-categories defined by a $\Bbbk$-linear presentable $n$-category. Moreover, since limits and colimits over spaces in $(n+1)\LinkPrLUtwo[n]$ agree (\cref{fact:limits_of_presentable_n}), this functor is also equivalent to the \textit{left} adjoint of the constant local system functor. In particular, it commutes with \textit{all} limits and colimits. Finally, we have to prove that for any $\Bbbk$-linear presentable $n$-category $n\bm{\scrC}$ the $n$-functor
$$n\bm{\scrC}\otimes_{n\mathbf{Lin}_{\Bbbk}\mathbf{Pr}^{\operatorname{L}}_{(\scriptscriptstyle\infty,n-1)}}n\LocSysCattwo^{n-1}(X;\Bbbk)\longrightarrow n\bm{\Gamma}^{\operatorname{enh}}\lp X_{\operatorname{B}},\hsp \operatorname{const}(n\bm{\scrC})\rp,$$
obtained by adjunction from the $n$-functor \begin{align*}
n\bm{\scrC}\longrightarrow n\bm{\Gamma}\lp X,\hsp \operatorname{const}(n\bm{\scrC})\rp\simeq\oblv_{n\LocSysCattwo^{n-1}(X;\Bbbk)}n\bm{\Gamma}^{\operatorname{enh}}\lp X_{\operatorname{B}},\hsp \operatorname{const}(n\bm{\scrC})\rp
\end{align*}which constitutes the unit for the adjunction $\operatorname{const}\dashv\Gamma(X,-)$, is an equivalence. Since forgetting the $n\LocSysCattwo^{n-1}(X;\Bbbk)$-module structure is conservative, we can check whether this map is an equivalence at the level of the underlying $\Bbbk$-linear presentable $n$-categories. On one side, we can write the domain of the above map as\begin{align*}
n\bm{\scrC}\otimes_{n\mathbf{Lin}_{\Bbbk}\mathbf{Pr}^{\operatorname{L}}_{(\scriptscriptstyle\infty,n-1)}}n\LocSysCattwo^{n-1}(X;\Bbbk)&\simeq n\bm{\scrC}\otimes_{n\mathbf{Lin}_{\Bbbk}\mathbf{Pr}^{\operatorname{L}}_{(\scriptscriptstyle\infty,n-1)}}\lim_X n\LinkPrLUtwo[n-1]\\&\simeq n\bm{\scrC}\otimes_{n\mathbf{Lin}_{\Bbbk}\mathbf{Pr}^{\operatorname{L}}_{(\scriptscriptstyle\infty,n-1)}}\colim_X n\LinkPrLUtwo[n-1]\\&\simeq\colim_Xn\bm{\scrC}\otimes_{n\mathbf{Lin}_{\Bbbk}\mathbf{Pr}^{\operatorname{L}}_{(\scriptscriptstyle\infty,n-1)}}n\LinkPrLUtwo[n-1]\\&\simeq\colim_X n\bm{\scrC}\simeq\lim_X n\bm{\scrC},
\end{align*}thanks to the fact that one can swap limits and colimits of presentable $n$-categories indexed by spaces. On the other hand, the codomain of this map is$$\oblv_{n\LocSysCattwo^{n-1}(X;\Bbbk)}n\bm{\Gamma}^{\operatorname{enh}}\lp X_{\operatorname{B}},\hsp \operatorname{const}(n\bm{\scrC})\rp\simeq\lim_X n\bm{\Gamma}(\left\{\ast\right\}_{\operatorname{B}},\hsp \operatorname{const}(n\bm{\scrC}))\simeq\lim_X n\bm{\scrC}.$$So, we can apply \cite[Corollary 4.7.3.16]{ha} and obtain that the functor $\Gamma^{\mathrm{enh}}(X_{\operatorname{B}},-)$ admits a fully faithful left adjoint $\mathrm{Loc}^n_{X_{\operatorname{B}}}$.
\end{proof}
\begin{porism}
\label{porism:conservativity}
The proof of \cref{prop:fullyfaithfulness} crucially relies on \cite[Corollary 4.7.3.16]{ha}, which also implies that the fully faithful left adjoint $\mathrm{Loc}^n_{X_{\operatorname{B}}}$ is an equivalence precisely if the global sections functor is conservative, because this is the only obstruction to its monadicity. 
Note that the proof of \cref{prop:fullyfaithfulness}  provides an identification between the global sections $(n+1)$-functor $(n+1)\bm{\Gamma}(X_{\operatorname{B}},-)$ and  the global sections $(n+1)$-functor for local systems of presentable $n$-categories over $X$; in turn these are realized as a limit of presentable $n$-categories indexed by the space $X$. Since limits and colimits of presentable $n$-categories over diagrams indexed by spaces are canonically equivalent for all $n\geqslant1$, it follows that in this case the global sections \textit{always} commute with \textit{all} colimits. This simplifies greatly the discussion in \cref{remark:on0affineness} when $n\geqslant1$, and implies that for Betti stacks the weak and strong notions of $n$-affineness agree already for $n=1$ (but not for $n=0$).

Using \cref{remark:monadicity}, we can summarize the situation as follows: for any integer $n\geqslant1$ and for any space $X$, the following are equivalent:
\begin{enumerate}
\item the Betti stack $X_{\operatorname{B}}$ is $n$-affine, 
\item the underlying functor of the global sections $(n+1)$-functor
\[
(n+1)\LocSysCattwo^n(X;\Bbbk)\longrightarrow(n+1)\LinkPrLUtwo[n]
\]
is conservative,  
\item the enriched global sections functor yields an equivalence 
$$
(n+1)\LocSysCattwo^{n}(X;\Bbbk) \simeq (n+1)\mathbf{Mod}_{n\LocSysCattwo^{n-1}(X;\Bbbk)}{\lp (n+1)\PrLUtwo[n]\rp}$$
of presentable $(n+1)$-categories.
\end{enumerate}
\end{porism}
We now provide a complete characterization of Betti stacks that are $n$-affine, which generalizes \cite[Proposition 11.2.1]{1affineness} and strengthens the $n$-affineness criterion of \cite[Theorem $14.3.9$]{stefanichthesis}, at least in the topological setting. A drawback of our result is that the necessary and sufficient condition we find is not very easy to verify in practice. For this reason, at the end of this section we shall complement this result by providing more explicit  sufficient conditions for $n$-affineness and its failure. 
\begin{theorem}
\label{prop:naffineifloopn-1}
Let $n\geqslant1$ be an integer and let $X$ be a connected space with a choice of a base point. Then the following are equivalent.
\begin{thmenum}
\item\label{thm:Xnaffine}The Betti stack $X_{\operatorname{B}}$ is $n$-affine.
\item\label{thm:globalsecff}The $n$-functor
\[
n\LinkPrLUtwo[n-1]\otimes_{n\LocSysCattwo^{n-1}(X;\Bbbk)}n\LinkPrLUtwo[n-1]\longrightarrow n\LocSysCattwo^{n-1}(\Omega_*X;\Bbbk)
\]
is an equivalence.
\item\label{thm:n-1monadicity}The global sections $n$-functor
\[
n\bm{\Gamma}(\Omega_*(X_{\operatorname{B}}),-)\colon n\LocSysCattwo^{n-1}(\Omega_*X;\Bbbk)\longrightarrow n\LinkPrLUtwo[n-1]
\]
is monadic.
\end{thmenum}
\end{theorem}
\begin{remark}
\label{remark:omegabetti}
Notice that the Betti stack functor $(-)_{\operatorname{B}}$, being a geometric morphism  of topoi, preserves both colimits and finite limits. In particular, for a point $\eta\colon\left\{\ast\right\}\to X$ (inducing a point $\eta_{\operatorname{B}}\colon\Spec(\Bbbk)\to X_{\operatorname{B}}$) one has an obvious equivalence
\[
\Omega_*(X_{\operatorname{B}})\simeq(\Omega_*X)_{\operatorname{B}}.
\]
We shall simply write $\Omega_*X_{\operatorname{B}}$ for such Betti stack. In particular, if $n\geqslant2$, then \cref{prop:naffineifloopn-1} tells us that $X_{\operatorname{B}}$ is $n$-affine if and only if $\Omega_*X_{\operatorname{B}}$ is $(n-1)$-affine.
\end{remark}
  \begin{remark}
\label{warning:wrongkoszul}
Before proceeding with the proof of \cref{prop:naffineifloopn-1} let us comment on its statement when $n=1$. Under what conditions is the global section functor on the category of local systems over a space $Y$ monadic? Or, using the terminology of \cref{remark:on0affineness}, under what conditions is a Betti stack $Y_{\mathrm{B}}$  \emph{almost $0$-affine}?  It turns out that the answer is quite subtle.  As explained in \cref{remark:on0affineness}, we have the following simple facts: 
  \begin{enumerate} 
\item The functor $\Gamma(Y,-)$ is monadic, i.e., $Y_{\mathrm{B}}$ is almost $0$-affine,  \emph{only if} the trivial local system  $\underline{\Bbbk}_{ Y}$ is a generator of 
 $\LocSys(Y;\Bbbk)$.
\item The functor $\Gamma(Y,-)$ is monadic \emph{if} the trivial local system  $\underline{\Bbbk}_{Y}$ is a compact generator of 
 $\LocSys(Y;\Bbbk)$. Indeed, in this case $Y_{\mathrm{B}}$ is weakly $0$-affine in the sense of Gaitsgory, and therefore in particular almost $0$-affine. In turn,  by Schwede--Shipley (\cite[Proposition $7.1.2.6$]{ha})
  $\underline{\Bbbk}_{Y}$ is a compact generator if and only if the global section functor induces an equivalence 
\begin{align}
\label{bzn}
\LocSys(Y;\Bbbk) \simeq\Mod_{\operatorname{C}^{\bullet}(Y;\Bbbk)}.
\end{align}
\end{enumerate}
  In \cite[Corollary $3.18$]{BZN} it is stated that the equivalence \eqref{bzn} holds for all simply connected and finite spaces. However, as pointed out to us by Y. Harpaz and confirmed by D. Nadler in private communication, this statement is wrong. Consider for example $Y\coloneqq S^2$ to be the sphere. The algebra of $\Bbbk$-chains on its based loop space $\Omega_*Y$ is a free associative algebra $\Bbbk\langle u\rangle$ generated by a variable $u$ lying in homological degree $1$. Then the functor $\Mapin_{\operatorname{C}_{\bullet}(\Omega_*Y;\Bbbk)}(\Bbbk,-)$ cannot be conservative, because the non-trivial $\Bbbk\langle u\rangle$-module $\Bbbk\langle u,u^{-1}\rangle$ is right orthogonal to $\Bbbk$. In particular, $\Bbbk$ cannot be a \textit{generator} of $\LMod_{\operatorname{C}_{\bullet}(\Omega_*Y;\Bbbk)}$: and so in particular it cannot be a \emph{compact generator}.

Equivalence \eqref{bzn}  holds for finite disjoint unions of contractible spaces, and we believe that in fact these might be the only examples. We do not know of any non-trivial space satisfying \eqref{bzn}. 
On the other hand, a characterization of almost $0$-affine Betti stacks would be very interesting, but it seems difficult to achieve, and we have only partial results in this direction: we devote the whole \cref{sec:consequences} to collect them. In \cref{obstruction}, we show that the non-triviality of $\pi_1(Y,y)$ at any base point obstructs the monadicity of the global sections functor of local systems over $Y$, which is perhaps an expected result. In \cref{cor:BCPinfty} we will also show that the Betti stack of $\CC\PP^{\scriptstyle\infty}$ is almost $0$-affine, so we do have non-trivial examples. We leave the further exploration of these questions to future work.
\end{remark}
\begin{proof}[Proof of \cref{prop:naffineifloopn-1}]
For $n=1$, this is just \cite[Proposition 11.2.1]{1affineness}, so we can assume that $n\geqslant2$. In virtue of \cref{prop:fullyfaithfulness}, the Betti stack $X_{\operatorname{B}}$ is $n$-affine if and only if the global sections functor
\[
\Gamma^{\mathrm{enh}}(X_{\operatorname{B}},-)\colon \LocSysCat^n(X;\Bbbk)\longrightarrow\Mod_{n\LocSysCattwo^{n-1}(X;\Bbbk)}\lp\PrLU[n]\rp
\]
is fully faithful. Using \cref{conj:infinityn}, we find an equivalence of functors
\[
\Gamma^{\mathrm{enh}}(X_{\operatorname{B}},-)\simeq n\FuninLtwo_{n\LocSysCattwo^{n-1}(\Omega_*X;\Bbbk)}{\lp n\LinkPrLUtwo[n-1],-\rp},
\]
where $n\LinkPrLUtwo[n-1]$ is equipped with the trivial $n\LocSysCattwo^{n-1}(\Omega_*X;\Bbbk)$-module structure obtained by pulling back along the natural map $\Omega_*X\to\left\{\ast\right\}$. Using ambidexterity of limits and colimits indexed by groupoids in categories of presentable $n$-categories (\cref{fact:limits_of_presentable_n}), we also obtain an equivalence of functors
\[
\Gamma^{\mathrm{enh}}(X_{\operatorname{B}},-)\simeq n\LinkPrLUtwo[n-1]\otimes_{n\LocSysCattwo^{n-1}(\Omega_*X;\Bbbk)}(-).
\]
In particular, $\Gamma^{\mathrm{enh}}(X_{\operatorname{B}},-)$ is fully faithful if and only if, for any categorical $n\LocSysCattwo^{n-1}(\Omega_*X;\Bbbk)$-representation $n\bm{\scrC}$, the counit $n$-functor is an equivalence, i.e., if and only if the $n$-functor
\[
n\bm{\scrC}\otimes_{n\LocSysCattwo^{n-1}(\Omega_*X;\Bbbk)}n\LinkPrLUtwo[n-1]\otimes_{n\LocSysCattwo^{n-1}(X;\Bbbk)}n\LinkPrLUtwo[n-1]\longrightarrow n\bm{\scrC}
\]
is an equivalence. This immediately implies the equivalence between \cref{thm:Xnaffine} and \cref{thm:globalsecff}.

We shall now turn to proving the equivalence between \cref{thm:globalsecff} and \cref{thm:n-1monadicity}. We will need the following lemma.
\begin{lemma}
\label{lemma:actionmonadic}
The underlying functor of the action $n\LocSysCattwo^{n-1}(X;\Bbbk)$-linear $(n+1)$-functor
\begin{equation}
\label{functor:monoidalstructure}
n\LinkPrLUtwo[n-1]\otimes_{n\LocSysCattwo^{n-1}(X;\Bbbk)}n\LinkPrLUtwo[n-1]\longrightarrow n\LinkPrLUtwo[n-1]
\end{equation}
is a monadic functor of categories.
\end{lemma}
\begin{proof}
The tensor product $n\LinkPrLUtwo[n-1]\otimes_{n\LocSysCattwo^{n-1}(X;\Bbbk)}n\LinkPrLUtwo[n-1]$ is computed as a geometric realization of a simplicial diagram of $n$-categories $n\bm{\scrC}_{\bullet}$, whose $i$-th term is described as$$n\bm{\scrC}_i\simeq n\LinkPrLUtwo[n-1]\otimes n\LocSysCattwo^{n-1}(X;\Bbbk)^{\otimes i}\otimes n\LinkPrLUtwo[n-1]\simeq n\LocSysCattwo^{n-1}(X;\Bbbk)^{\otimes i},$$
where the faces and degeneracies are induced by pullback $n$-functors. Here, the tensor product is understood as the tensor product of $\Bbbk$-linear presentable $n$-categories, whose monoidal unit is $n\LinkPrLUtwo[n-1]$. Under the equivalences$$n\LinkPrLUtwo[n-1]\otimes n\LocSysCattwo^{n-1}(X;\Bbbk)^{\otimes i}\otimes n\LinkPrLUtwo[n-1]\simeq n\LocSysCattwo^{n-1}\lp X^{\times i};\Bbbk\rp,$$we can describe such simplicial object in more detail.
\begin{enumerate}
    \item The degeneracy morphisms of such simplicial diagram correspond to pulling back categorical local systems along projections $X^{\times i}\to X^{\times (i-1)}$.
    \item The face morphisms correspond either to pulling back categorical local systems along the extremal inclusions $X^{\times(i-1)}\simeq \left\{*\right\}\times X^{\times(i-1)}\subseteq X^{\times i}$ and $X^{\times(i-1)}\simeq X^{\times(i-1)}\times\left\{*\right\}\subseteq X^{\times i}$ (these are the face morphisms $\partial_0$ and $\partial_i$), or to pulling back categorical local systems along the morphisms $\Delta_p\colon X^{\times(i-1)}\to X^{\times i}$ described informally by $(x_1,\ldots,x_{p-1},x_p,x_{p+1},\ldots, x_{i-1})\mapsto (x_1,\ldots, x_{p-1},x_p,x_p,x_{p+1},\ldots,x_{i-1})$  (these are the face morphisms $\partial_p$, for $p\in\left\{1,\ldots, i-1\right\}$).
\end{enumerate}
Thanks to this description of the faces $n$-functors in this simplicial $n$-category, we immediately see that for any morphism $\alpha\colon [i]\to[j]$ the diagram$$\begin{tikzpicture}[scale=0.75]
\node (a) at (4,-1.5){$\displaystyle n\LocSysCattwo^{n-1}\lp X^{\times i};\Bbbk\rp$};
\node (b) at (-4,-1.5){$\displaystyle n\LocSysCattwo^{n-1}\lp X^{\times (i+1)};\Bbbk\rp$};
\node (c) at (4,1){$\displaystyle n\LocSysCattwo^{n-1}\lp X^{\times j};\Bbbk\rp$};
\node (d) at (-4,1){$\displaystyle n\LocSysCattwo^{n-1}\lp X^{\times (j+1)};\Bbbk\rp$};
\draw[->,font=\scriptsize] (b) to node[below]{$\displaystyle\partial_{i,*}$}(a);
\draw[->,font=\scriptsize] (d) to node[above]{$\displaystyle\partial_{i+1,*}$}(c);
\draw[->,font=\scriptsize] (c) to node[right]{$\displaystyle\alpha^*$} (a);
\draw[->,font=\scriptsize] (d) to node[left]{$\displaystyle(\alpha\star \mathrm{id}_{[0]})^*$} (b);
\end{tikzpicture}$$is commutative. This means that such simplicial diagram of $n$-categories (or better, the underlying simplicial diagram of categories) satisfies the monadic Beck-Chevalley condition (\cite[Definition C.$1.5$]{1affineness}), after applying suitably \cite[Lemma C.$1.6$]{1affineness}. Hence, \cite[Lemma C.$1.8$]{1affineness} implies that such action functor is indeed monadic.
\end{proof}
\cref{lemma:actionmonadic} is what we need in order to apply \cite[Corollary C.$2.3$]{1affineness}, which guarantees that we can compute the monad described by the action functor$$\alpha\colon\LinkPrLU[n-1]\otimes_{\LocSysCat^{n-1}(X;\Bbbk)}\LinkPrLU[n-1]\longrightarrow\LinkPrLU[n-1]$$as the composition$$\eta^*\circ\eta_*\colon\LinkPrLU[n-1]\longrightarrow\LinkPrLU[n-1]$$where $\eta\colon\left\{*\right\}\hookrightarrow X$ is the inclusion of the base point. This implies that the naturally defined functor$$\mathfrak{D}^n\colon\LinkPrLU[n-1]\otimes_{\LocSysCat^{n-1}(X;\Bbbk)}\LinkPrLU[n-1]\longrightarrow\LocSysCat^{n-1}(\Omega_*X;\Bbbk),$$obtained by taking the right adjoint to$$\LinkPrLU[n-1]\otimes\LinkPrLU[n-1]\simeq\LinkPrLU[n-1]\longrightarrow\LinkPrLU[n-1]\otimes_{\LocSysCat^{n-1}(X;\Bbbk)}\LinkPrLU[n-1]$$and then composing with the pullback $n$-functors induced by the two projections $\pi_1,\pi_2\colon\Omega_*X\rightrightarrows\left\{ *\right\}$, makes the diagram$$\begin{tikzpicture}[scale=0.75]
\node (a) at (-5,2){$\displaystyle\LinkPrLU[n-1]\otimes_{\LocSysCat^{n-1}(X;\Bbbk)}\LinkPrLU[n-1]$};
\node (b) at (5,2){$\displaystyle\LocSysCat^{n-1}(\Omega_*X;\Bbbk)$};
\node (c) at (0,0){$\displaystyle\LinkPrLU[n-1]$};
\draw[->,font=\scriptsize] (a) to node[above]{$\mathfrak{D}^n$}(b);
\draw[->,font=\scriptsize] (a) to[out=285,in=180] node[below left]{$\alpha$}(c);
\draw[->,font=\scriptsize] (b) to[out=255,in=0] node[right]{} (c);
\end{tikzpicture}$$
commute. In the above picture, the right-hand side arrow is the push-forward along the natural terminal morphism $\Omega_*X\to\left\{*\right\}$ (i.e., it is the global sections functor).

So, by Barr--Beck--Lurie, the functor $\mathfrak{D}^n$ is an equivalence if and only if $$\Gamma(\Omega_*X_{\operatorname{B}},-)\colon\LocSysCat^{n-1}(\Omega_*X;\Bbbk)\longrightarrow\LinkPrLU[n-1]$$is a monadic functor. But since $n\geqslant2$, this is equivalent to $n\LocSysCattwo^{n-1}(\Omega_*X;\Bbbk)$ being monadic over $n\LinkPrLUtwo[n-1]$ as $n$-categories.
\end{proof}
\subsection{Consequences of \cref{prop:naffineifloopn-1} and examples}
\label{sec:consequences}
In this last subsection, we collect some neat consequences of \cref{prop:naffineifloopn-1}, which determine the $n$-affineness (or the failure to be $n$-affine) of some classes of Betti stacks. We also study in detail the examples of $\CC\PP^{\scriptstyle\infty}$ (\cref{exmp:counterexample}) and of $S^1$ (\cref{exmp:sphere}). We start by showing that \cref{prop:naffineifloopn-1} provides a necessary and sufficient condition for $n$-affineness of \textit{arbitrary} (i.e., not necessarily connected) Betti stacks.
\begin{lemma}
\label{lemma:connected}
Let $n\geqslant0$ be an integer, and let $X$ be a disjoint union of (possibly infinitely many) connected components$$X=\coprod_{\alpha \in \pi_0X}X_{\alpha}.$$Then the functor
\[
\Gamma(X,-)\colon\LocSysCat^n(X;\Bbbk)\longrightarrow\LinkPrLU[n]
\]
is monadic if and only if each functor $\Gamma(X_{\alpha},-)$ is monadic. In particular, for $n\geqslant1$, the Betti stack $X_{\operatorname{B}}$ is $n$-affine if and only if each $(X_{\alpha})_{\operatorname{B}}$ is $n$-affine.
\end{lemma}
\begin{proof}
In virtue of \cref{fact:limits_of_presentable_n}, we have equivalences
\[
\LocSysCat^n(X;\Bbbk)\simeq\prod_{\alpha\in\pi_0X}\LocSysCat^n(X_{\alpha};\Bbbk)\simeq\bigoplus_{\alpha\in\pi_0X}\LocSysCat^n(X_{\alpha};\Bbbk)
\]
and the natural functors $\LocSysCat^n(X_{\alpha};\Bbbk)\to\LocSysCat^n(X;\Bbbk)$ are colimit-preserving fully faithful functors. Since the functor $\Gamma(X,-)$ computes the product of all the global sections functors$$\Gamma(X_{\alpha},-)\colon\LocSysCat^n(X_{\alpha};\Bbbk)\longrightarrow\LinkPrLU[n],$$it is clear that if $\Gamma(X,-)$ is conservative then all the functors $\Gamma(X_{\alpha},-)$ are conservative as well.

For the converse, we only need to check that taking products of conservative functors is conservative. But since $\LinkPrLU[n]$ is pointed for all integers $n\geqslant0$, it is easy to see that given a morphism of $n$-categorical local systems on $X$
$$n\bm{F}\simeq(n\bm{F}_{\alpha})\colon n\bm{\scrF}\simeq(n\bm{\scrF}_{\alpha})\longrightarrow n\bm{\mathscr{G}}\simeq(n\bm{\mathscr{G}}_{\alpha}),$$
then each $n\bm{\Gamma}(X_{\alpha},n\bm{F}_{\alpha})$ is realized as a retract of the morphism $n\bm{\Gamma}(X,n\bm{F})$. In particular, if the latter is an equivalence then the former are equivalences as well. Since each ${\Gamma}(X_{\alpha},-)$ was assumed to be conservative, this implies that $\Gamma(X,-)$ is conservative as well.

Since for $n\geqslant1$ the only obstruction to monadicity is the conservativity of the global sections (\cref{porism:conservativity}), this argument proves the lemma for all non-negative integers. However, when $n=0$ we also need to prove that the functor $\Gamma(X,-)$ preserves colimits of $\Gamma(X,-)$-split simplicial diagrams if and only if each $\Gamma(X_{\alpha},-)$ preserves colimits of $\Gamma(X_{\alpha},-)$-split simplicial diagrams. Since the natural functors
\[
\LocSys(X_{\beta};\Bbbk)\longrightarrow\LocSys(X;\Bbbk)
\]
are fully faithful and commute with all colimits, one direction is obvious. On the converse, assume that each $\Gamma(X_{\alpha},-)$ is monadic, and let $\scrF_{\bullet}$ be an augmented simplicial object in $\LocSys(X;\Bbbk)$ which becomes split after taking global sections. In particular, we can interpret it as a collection of augmented simplicial objects \[
\scrF_{\bullet}\simeq\lp\scrF^{\alpha}_{\bullet}\rp_{\alpha\in\pi_0X}.\]We claim that such product is split because each $\Gamma(X_{\alpha},\scrF^{\alpha}_{\bullet})$ is already a split simplicial object of $\Mod_{\Bbbk}$. Recall from \cite[$\S4.7.2$]{ha} the functor $\rho\colon\bDelta_+\to\bDelta_+$ defined by the assignment$$[n]\mapsto[0]\star[n]\cong[n+1].$$For any category $\scrC$, let us denote by $\mathrm{T}$ the functor
\[
(-)\circ\rho^{\op}\colon\Fun{\lp\bDelta^{\op},\hsp\scrC\rp}\longrightarrow\Fun{\lp\bDelta^{\op},\hsp\scrC\rp}.
\]
The natural inclusion $[n]\subseteq\rho([n])$ defines a natural transformation from $\rho$ to $\operatorname{id}_{\bDelta_+}$, so for any simplicial object $X_{\bullet}$ in $\scrC$ we have a map $\varphi_{\bullet}\colon\mathrm{T}X_{\bullet}\to X_{\bullet}$. In virtue of \cite[Corollary $4.7.2.9$]{ha}, the simplicial object $X_{\bullet}$ is split if and only if $\varphi_{\bullet}$ admits a right homotopy inverse $\psi_{\bullet}\colon X_{\bullet}\to\mathrm{T}X_{\bullet}$.

In our case, we obtain an augmented simplicial $\Bbbk$-module $\mathrm{T}\Gamma\lp X,\scrF_{\bullet}\rp$ such that for all $n\geqslant 0$ one has $$\mathrm{T}\Gamma\lp X,\scrF_n\rp\simeq \Gamma\lp X,\mathrm{T}\scrF_{n}\rp\simeq \Gamma\lp X,\scrF_{n+1}\rp.$$
Since the simplicial local system $\scrF_{\bullet}$ is $\Gamma(X,-)$-split, we have a morphism $\psi_{\bullet}\colon\Gamma\lp X,\hsp \scrF_{\bullet}\rp\longrightarrow \mathrm{T}\Gamma\lp X,\hsp \scrF_{\bullet}\rp$ which provides a right homotopy inverse to the product of the sequence of maps of $\Bbbk$-modules
\[
\lp \varphi_{\bullet}^{\alpha}\colon \Gamma\lp X_{\alpha},\mathrm{T}\scrF^{\alpha}_{\bullet}\rp\longrightarrow\Gamma\lp X_{\alpha},\scrF_{\bullet}^{\alpha}\rp\rp_{\alpha\in\pi_0X}.
\]
So, for any index $\overline{\alpha}\in\pi_0X$ we define
\begin{align*}
\psi_{\bullet}^{\overline{\alpha}}\colon \Gamma \lp X_{\alpha},\scrF^{\overline{\alpha}}_{\bullet}\rp\overset{\iota^{\overline{\alpha}}_{\bullet}}{\longhookrightarrow}\prod_{\alpha\in\pi_0X}\Gamma\lp X_{\alpha},\scrF^{\alpha}_{\bullet}\rp&\overset{\psi_{\bullet}}{\longrightarrow} \prod_{\alpha\in\pi_0X}\Gamma\lp X_{\alpha}, \mathrm{T}\scrF_{\bullet}^{\alpha}\rp\overset{\pi^{\overline{\alpha}}_{\bullet}}{\longrightarrow} \Gamma\lp X_{\alpha},\mathrm{T} \scrF^{\overline{\alpha}}_{\bullet}\rp,
\end{align*}
which is straight-forwardly seen to provide a right homotopy inverse to each $\varphi^{\overline{\alpha}}_{\bullet}$. In particular, each simplicial local system $\scrF^{\alpha}_{\bullet}$ is $\Gamma(X_{\alpha},-)$-split and as such each functor  $\Gamma(X_{\alpha},-)$ preserves its geometric realization. We thus obtain a chain of equivalences \begin{align*}
\colim_{[n]\in\bDelta^{\op}} \Gamma\lp X,\scrF_{\bullet}\rp&\simeq \colim_{[n]\in\bDelta^{\op}}\prod_{\alpha\in\pi_0X}\Gamma\lp X_{\alpha},\scrF^{\alpha}_{\bullet}\rp\\&\simeq \prod_{\alpha\in\pi_0X}\colim_{[n]\in\bDelta^{\op}}\Gamma\lp X_{\alpha},\scrF^{\alpha}_{\bullet}\rp\\&\simeq \prod_{\alpha\in\pi_0X}\Gamma\lp X_{\alpha},\colim_{[n]\in\bDelta^{\op}}\scrF^{\alpha}_{\bullet}\rp\simeq \Gamma\lp X,\hsp \colim_{[n]\in\bDelta^{\op}}\scrF_{\bullet}\rp
 \end{align*}
 where in the second equivalence we used the fact that colimits of split simplicial objects are universal (\cite[Remark $4.7.2.4$]{ha}). This completes the proof.
\end{proof}
\begin{remark}
\label{remark:decategorified}
Let $X$ be a (pointed) $n$-connected space. Applying iteratively \cref{prop:naffineifloopn-1}, we can check the $n$-affineness of $X_{\operatorname{B}}$ by examining the monadicity of the de-categorified global sections functor
\[
\Gamma(\Omega_*^nX,-)\colon\LocSys(\Omega_*^nX;\Bbbk)\longrightarrow\Mod_{\Bbbk}.
\]
When $X$ is not $n$-connected, we can still argue in a similar way, up to choosing a sequence of base points over all connected components of each iterated based loop space $\Omega^k_*X$, for $k\in\left\{0,\ldots,n-1\right\}$. Indeed, applying \cref{prop:naffineifloopn-1}, we can reduce the problem of $n$-affineness for $X_{\operatorname{B}}$ to the problem of the monadicity of the global sections functors
\[
\Gamma(\Omega_*X_{\alpha},-)\colon\LocSysCat^{n-1}(\Omega_*X_{\alpha};\Bbbk)\longrightarrow\LinkPrLU[n-1].
\]
In virtue of \cref{lemma:connected}, the above functor is monadic if and only if the global sections functors over all connected components of $\Omega_*X_{\alpha}$ are monadic. But any H-space is homogeneous (that is, all connected components are homotopy equivalent one to the other), so we deduce an equivalence of spaces
\[
\Omega_*X_{\alpha}\simeq\coprod_{\beta\in\pi_1X}\mathbf{B}(\Omega_*^2X_{\alpha})_{\beta}.
\]
Thus, it is sufficient to check the $(n-1)$-affineness of the Betti stack associated to the (connected) space $\mathbf{B}\Omega_*^2X$. Applying this line of argument iteratively, we obtain that for an arbitrary space $X$ the $n$-affineness of the Betti stack $X_{\mathrm{B}}$ is equivalent to the monadicity of the de-categorified global sections functors
\[
\Gamma(\Omega_*^nX_{\alpha},-)\colon\LocSys(\Omega_*^nX_{\alpha};\Bbbk)\longrightarrow\Mod_{\Bbbk}
\]
for all $\alpha\in\pi_0X$, up to choosing sufficiently many base points in $X$ and its loopings. Under the monodromy equivalence \eqref{lemma:koszulduality}, choosing base points in each connected component of $\Omega^n_*X$, this is in turn equivalent to the monadicity of the functor
\begin{align}
\label{functor:monadicaffine}
\Mapin_{\mathrm{C}_{\bullet}(\Omega^{n+1}_*X;\Bbbk)}(\Bbbk,-)\colon\LMod_{\mathrm{C}_{\bullet}(\Omega^{n+1}_*X;\Bbbk)}\longrightarrow\Mod_{\Bbbk}.
\end{align}
In particular, assuming sufficiently many base points, we can reduce the problem of $n$-affineness of arbitrary Betti stacks to the problem of $n$-affineness of Betti stacks of \textit{$(n+1)$-connected} spaces.
\end{remark}
\cref{remark:decategorified} provides a useful tool in proving that Betti stacks of certain spaces are, or are not, $n$-affine.
\begin{corollary}
\label{thm:naffinenessntruncated}
Let $n\geqslant1$ be an integer, and let $X$ be a pointed $n$-truncated space. Then the Betti stack $X_{\operatorname{B}}$ is $n$-affine.
\end{corollary}
\begin{proof}
Arguing as in \cref{remark:decategorified}, we may reduce ourselves to prove the statement when $X$ is $(n+1)$-connected. But an $n$-truncated, $(n+1)$-connected space must be contractible, and so the assertion is trivial.
\end{proof}
\begin{remark}
\cref{thm:naffinenessntruncated} yields, for any $n\geqslant1$ and for any $n$-truncated space $X$, an equivalence of symmetric monoidal $(n+1)$-categories$$(n+1)\LocSysCattwo^n(X;\Bbbk)\simeq(n+1)\mathbf{Mod}_{n\LocSysCattwo^{n-1}(X;\Bbbk)}\PrLUtwo[n],$$where $n\LocSysCattwo^{n-1}(X;\Bbbk)$ is seen as a symmetric monoidal $n$-category via the natural (point-wise) symmetric monoidal structure. On the other hand, if $X$ is (pointed) connected we have a symmetric monoidal equivalence$$(n+1)\LocSysCattwo^n(X;\Bbbk)\simeq(n+1)\mathbf{Mod}_{n\LocSysCattwo^{n-1}(\Omega_*X;\Bbbk)}\PrLUtwo[n]$$in virtue of \cref{conj:infinityn}. Here, however, we consider the monoidal structure on $n\LocSysCattwo^{n-1}(\Omega_*X;\Bbbk)$ provided by the Day convolution tensor product, which takes into account the $\Ebb_1$-algebra structure of $\Omega_*X$. Combining these two equivalences, we obtain that for any pointed, connected and $n$-truncated space $X$ there is an equivalence between (presentable) $n$-categorical modules for the standard monoidal structure on $n\LocSysCattwo^{n-1}(X;\Bbbk)$ and (presentable) $n$-categorical modules for the convolution monoidal structure on $n\LocSysCattwo^{n-1}(\Omega_*X;\Bbbk)$. As explained in the proof of \cref{prop:naffineifloopn-1}, the explicit equivalence is provided by sending a $n\LocSysCattwo^{n-1}(X;\Bbbk)$-module $n\bm{\scrC}$ to the presentable $n$-category $$n\bm{\scrC}\otimes_{n\LocSysCattwo^{n-1}(X;\Bbbk)}n\LinkPrLUtwo[n-1],$$which inherits an $n\LocSysCattwo^{n-1}(\Omega_*X;\Bbbk)$-action from the one on $n\LinkPrLUtwo[n-1]$. This can be seen as a topological analog of the Morita equivalence for convolution categories of \cite[Theorem $1.3$]{benzvi2012morita}.
\end{remark}
We now provide negative results concerning the $n$-affineness of Betti stacks, which can be interpreted as a dual of \cref{thm:naffinenessntruncated} in some sense.
 \begin{proposition}
 \label{obstruction}
Let $n\geqslant0$. Let $X$ be a space, and assume $\Bbbk$ to be a semisimple commutative ring. If there exists a base point $x$ such that $\pi_{n+1}(X,x)$ contains an element $g$ either of infinite order, or such that the order of $g$ is a unit in $\Bbbk$, then the global sections functor
\[
\Gamma(X,-)\colon\LocSysCat^n(X;\Bbbk)\longrightarrow\LinkPrLU[n]
\]
is never monadic. 
 \end{proposition}
\begin{remark}
As an immediate consequence of \cref{obstruction}, for $n\geqslant1$ we have that the $(n+1)$-th homotopy group of a space $X$ always provides an obstruction to the $n$-affineness of its Betti stack over a field of characteristic zero.
\end{remark}
\begin{proof}[Proof of \cref{obstruction}]
Let us write $\pi\coloneqq\pi_{n+1}(X,x)$. Using \cref{prop:naffineifloopn-1} and arguing as in \cref{remark:decategorified}, we can reduce ourselves to prove the lemma when $X$ is connected and $n=0$. So, we need to prove that the functor of $\Omega^{n+1}_*X$-invariants \eqref{functor:monadicaffine} is not monadic. 

Consider $\pi$ as a discrete space. Notice that $\mathrm{C}_{\bullet}(\pi;\Bbbk)\simeq \pi_0(\mathrm{C}_{\bullet}(\Omega_*^{n+1}X;\Bbbk))$ is isomorphic as a $\Bbbk$-algebra to the group ring $\Bbbk[\pi]$, and the obvious projection
\[
\Omega^{n+1}_*X\longrightarrow \pi_0(\Omega^{n+1}_*X)\simeq \pi
\]
turns $\Bbbk[\pi]$ into a $\mathrm{C}_{\bullet}(\Omega_*^{n+1}X;\Bbbk)$-module. Let $g\in\pi$ be as in the statement: then $(1-g)$ is a non-nilpotent element. Indeed, consider the subgroup $\langle g\rangle$ generated by $g$: the inclusion $\langle g\rangle\subseteq \pi$ induces an inclusion of commutative rings $\Bbbk[\langle g\rangle]\subseteq\Bbbk[\pi]$, so if $(1-g)$ is not nilpotent in $\Bbbk[\langle g\rangle]$ it will automatically be not nilpotent in $\Bbbk[\pi]$ as well. We can therefore reduce ourselves to prove the statement in the case $\pi$ is cyclic and generated by $g$.
\begin{enumerate}
    \item If $g$ has infinite order, then $\Bbbk[\pi]\cong \Bbbk[t,t^{-1}]$ is a domain, hence $(1-g)$ is not nilpotent.
    \item If $g$ has finite order $n$ and $n$ is a unit in $\Bbbk$, then $\Bbbk[\pi]$ is a semisimple algebra because of Maschke's theorem (see for example \cite[Theorem $3.4.7$]{MR1896125}). In particular, $\Bbbk[\pi]$ is reduced and $(1-g)$ is not nilpotent.
\end{enumerate}
It follows that the set $S\coloneqq \left\{(1-t)^n\mid n\geqslant 0\right\}\subseteq\Bbbk[\pi]$ satisfies the Ore conditions in the graded commutative ring $\pi_{\bullet}(\mathrm{C}_{\bullet}(\Omega_*^{n+1}X;\Bbbk))$, hence there exists the localization $(\mathrm{C}_{\bullet}(\Omega_*^{n+1}X;\Bbbk))[(1-g)^{-1}]$ (\cite[Section $7.2.3$]{ha}) and since $S$ does not contain the $0$ element such localization is not trivial. Since $(1-g)$ is an element in the fiber of the map $$\mathrm{C}_{\bullet}(\Omega_*^{n+1}X;\Bbbk)\longrightarrow\Bbbk[\pi]\longrightarrow\Bbbk,$$it follows that $\Bbbk$ is $S$-nilpotent. In particular, \cite[Proposition $7.3.2.14$]{ha} guarantees that $(\mathrm{C}_{\bullet}(\Omega_*^{n+1}X;\Bbbk))[(1-g)^{-1}]$ is right orthogonal to $\Bbbk$ as a $\mathrm{C}_{\bullet}(\Omega_*^{n+1}X;\Bbbk)$-module, so the global sections cannot be conservative.
\end{proof}
We stress that \cref{thm:naffinenessntruncated} does \textit{not} provide a necessary condition for $n$-affineness of Betti stacks. Indeed, the following result provides a countable class of counterexamples.
\begin{theorem}
\label{cor:BCPinfty}  
Let $n\geqslant0$ and $p\geqslant n$ be integers, and let $\Bbbk$ be a field of characteristic zero. Consider $\ZZ$ as a discrete $\Einf$-space. Then the global sections functor
\[
\Gamma(\mathbf{B}^{p}\ZZ,-)\colon\LocSysCat^n(\mathbf{B}^p\ZZ;\Bbbk)\longrightarrow\LinkPrLU[n]
\]
is monadic when $p=n+2k$, while it is not monadic when $p=n+2k+1$. In particular, for $n\geqslant1$ and $k\geqslant0$, the Betti stack $\lp\mathbf{B}^{n+2k}\ZZ\rp_{\operatorname{B}}$ is $n$-affine, while the Betti stack $\lp\mathbf{B}^{n+2k+1}\ZZ\rp_{\operatorname{B}}$ is not.
\end{theorem}
We are deeply thankful to Y. Harpaz and to an anonymous referee for providing key suggestions and remarks in order to prove \cref{cor:BCPinfty}.
\begin{proof}
We use \cref{prop:naffineifloopn-1} and \cref{remark:decategorified} to reduce ourselves to prove the statement for $n=0$: for all $k\geqslant0$, the de-categorified global sections functor $\Gamma(\mathbf{B}^{2k}\ZZ,-)$ is monadic, while $\Gamma(\mathbf{B}^{2k+1}\ZZ,-)$ is not.

For $k=0$, the two statements are a straight-forward consequence of \cref{thm:naffinenessntruncated} and of \cref{obstruction}, respectively. So we can assume $k\geqslant1$; in particular, both $2k$ and $2k+1$ are greater or equal than $2$. For an arbitrary integer $p\geqslant2$ and for $X=\mathbf{B}^p\ZZ$, the global sections functor \eqref{functor:monadicaffine} can be written as
\begin{equation}
\label{functor:monadicaffine2}
\Mapin_{\mathrm{C}_{\bullet}(\mathbf{B}^{p-2}\ZZ;\Bbbk)}(\Bbbk,-)\colon\Mod_{\mathrm{C}_{\bullet}(\mathbf{B}^{p-2}\ZZ;\Bbbk)}\longrightarrow\Mod_{\Bbbk}.
\end{equation}
\begin{enumerate}
    \item When $p=2t+2$ is even, $t\geqslant0$, the functor \eqref{functor:monadicaffine2} cannot be conservative. Indeed, the chain $\Bbbk$-algebra $\mathrm{C}_{\bullet}(\mathbf{B}^{2t}\ZZ;\Bbbk)$ is equivalent to the formal $\Bbbk$-algebra $\Sym_{\Bbbk}(\Bbbk[2t])$, which has a non-nilpotent element $u$ in degree $2t$. Let $M\coloneqq \mathrm{C}_{\bullet}(\mathbf{B}^{2t}\ZZ;\Bbbk)[u^{-1}]$ denote the $\mathrm{C}_{\bullet}(\mathbf{B}^{2t}\ZZ;\Bbbk)$-module obtained by inverting $u$. Since $\Bbbk$ is $u$-nilpotent, it follows that $\Mapin_{\mathrm{C}_{\bullet}(\mathbf{B}^{2t}\ZZ;\Bbbk)}(\Bbbk,M)\simeq0.$
    \item When $p=2t+1$ is odd, $t\geqslant1$, the functor \eqref{functor:monadicaffine2} is always monadic. First, we claim that it is a composition of two monadic functors: a fully faithful Koszul duality embedding \[\Mod_{\mathrm{C}_{\bullet}(\mathbf{B}^{2t-1}\ZZ;\Bbbk)}\subseteq\Mod_{\mathrm{C}^{\bullet}(\mathbf{B}^{2t}\ZZ;\Bbbk)},\]
    followed by the obvious forgetful functor
    \begin{align}
        \label{functor:forgetful}
        \oblv_{\mathrm{C}^{\bullet}(\mathbf{B}^{2t}\ZZ;\Bbbk)}\colon \Mod_{\mathrm{C}^{\bullet}(\mathbf{B}^{2t}\ZZ;\Bbbk)}\longrightarrow\Mod_{\Bbbk}.
    \end{align}
    In particular, it is obviously conservative. Indeed, notice that $\Bbbk$ is a perfect $\mathrm{C}^{\bullet}(\mathbf{B}^{2t}\ZZ;\Bbbk)$-module: letting $u$ be the generator in degree $-2t$, $\Bbbk$ is the cofiber of the map
    \[
    \cdot u\colon\mathrm{C}^{\bullet}(\mathbf{B}^{2t}\ZZ;\Bbbk)[-2t]{\longrightarrow}\mathrm{C}^{\bullet}(\mathbf{B}^{2t}\ZZ;\Bbbk).
    \]
    In particular, the $\mathrm{C}^{\bullet}(\mathbf{B}^{2t}\ZZ;\Bbbk)$-linear dual of $\Bbbk$ is the fiber of the dual of the above map, which is $\Bbbk[2t-1]$. It follows that, up to a shift, the functors
    $\Mapin_{\mathrm{C}^{\bullet}(\mathbf{B}^{2t}\ZZ;\Bbbk)}(\Bbbk,-)$ and $\Bbbk\otimes_{\mathrm{C}^{\bullet}(\mathbf{B}^{2t}\ZZ;\Bbbk)}(-)$ are equivalent, and so we have a string of adjunctions
    \begin{align*}
    \Bbbk\otimes_{{\mathrm{C}_{\bullet}(\mathbf{B}^{2t-1}\ZZ;\Bbbk)}}(-)&\dashv\Mapin_{\mathrm{C}^{\bullet}(\mathbf{B}^{2t}\ZZ;\Bbbk)}(\Bbbk,-)\\&\simeq\Bbbk[2t-1]\otimes_{\mathrm{C}^{\bullet}(\mathbf{B}^{2t}\ZZ;\Bbbk)}(-)\dashv\Mapin_{\mathrm{C}_{\bullet}(\mathbf{B}^{2t-1}\ZZ;\Bbbk)}(\Bbbk,-)[2t-1].
    \end{align*}
    So, in order to prove that the Koszul duality functor is fully faithful, we can reduce ourselves to prove that $\Bbbk\otimes_{{\mathrm{C}_{\bullet}(\mathbf{B}^{2t-1}\ZZ;\Bbbk)}}(-)$ is fully faithful. But this is a left adjoint of a functor which preserves colimits, and so it sufficient to check that the unit
    \[
    M\longrightarrow \Mapin_{ \mathrm{C}^{\bullet}(\mathbf{B}^{2t}\ZZ;\Bbbk)}\lp \Bbbk, \Bbbk\otimes_{{\mathrm{C}_{\bullet}(\mathbf{B}^{2t-1}\ZZ;\Bbbk)}} M\rp.
    \]
    is an equivalence when $M$ is the compact generator $\mathrm{C}_{\bullet}(\mathbf{B}^{2t-1}\ZZ;\Bbbk)$. This, in turn, is an obvious consequence of Koszul duality.

    The essential image of the Koszul duality embedding identifies $\Mod_{\mathrm{C}_{\bullet}(\mathbf{B}^{2t-1}\ZZ;\Bbbk)}$ with the category of \textit{$u$-complete modules}. These are those $\mathrm{C}^{\bullet}(\mathbf{B}^{2t}\ZZ;\Bbbk)$-modules $M$ for which the limit
\[
\lim_{u}M\coloneqq\lim_{\ZZ}\lp\cdots\overset{\cdot u}{\longrightarrow}M\overset{\cdot u}{\longrightarrow}M\overset{\cdot u}{\longrightarrow}M\rp
\]
    vanishes. Thus, to show the monadicity of the functor \eqref{functor:monadicaffine2} it is sufficient to check that if a simplicial diagram $M_{\bullet}$ of $u$-complete $\mathrm{C}^{\bullet}(\mathbf{B}^{2t}\ZZ;\Bbbk)$-modules splits as a simplicial diagram of $\Bbbk$-modules, then its geometric realization is itself $u$-complete. Since the forgetful functor \eqref{functor:forgetful} commutes with all limits and colimits, for any integer $q\in\ZZ$ we have a chain of isomorphisms of $\Bbbk$-vector spaces
\begin{align*}
\pi_q\lp\oblv_{\mathrm{C}^{\bullet}(\mathbf{B}^{2t}\ZZ;\Bbbk)}\lim_{u}\colim_{[n]\in\bDelta^{\op}}M_n\rp&\cong\pi_q\lp\lim_{\beta}\colim_{[n]\in\bDelta^{\op}}\oblv_{\mathrm{C}^{\bullet}(\mathbf{B}^{2t}\ZZ;\Bbbk)}M_n\rp\\
&\cong\colim_{[n]\in\bDelta^{\op}}\pi_q\lp\lim_{\beta}\oblv_{\mathrm{C}^{\bullet}(\mathbf{B}^{2t}\ZZ;\Bbbk)}M_n\rp,
\end{align*}
where we used the fact that $\oblv_{\mathrm{C}^{\bullet}(\mathbf{B}^{2t}\ZZ;\Bbbk)}M_{\bullet}$ is split and so its geometric realization is universal. Since each $M_n$ was complete, this is $0$ for all integers $q$, and so we conclude that $\lim_{u}\colim_{\bDelta^{\op}}M_n\simeq0$ in virtue of the left and right completion of the $t$-structure on $\Mod_{\Bbbk}$.
\end{enumerate}
\end{proof}
 We conclude this section by studying in detail two examples, providing two different (and somewhat more direct) proofs of the $1$-affineness of the Betti stack of the circle $S^1$, and of the non-$1$-affineness of the Betti stack of the infinite projective space $\CC\PP^{\scriptstyle\infty}$. Both examples essentially already appeared in the literature, and can be viewed as instances of the emerging picture of 3d Homological Mirror Symmetry: we refer the reader to \cite{teleman},  \cite{gammage2023perverse} and references therein for additional information. 
\begin{exmp}
\label{exmp:counterexample}
Let $X\coloneqq\CC\PP^{\scriptstyle\infty}$ be the  infinite-dimensional projective space. It is a   simply connected  CW complex whose based loop space is homotopy equivalent to the circle, i.e.,  
$$
 \Omega_*X\simeq S^1,   \quad \text{ and }\quad\Omega^2_*X\simeq \mathbb{Z}.
$$ 
In particular, there is an equivalence of $\mathbb{E}_2$-algebras $
\operatorname{C}_{\bullet}(\Omega^2_*X;\Bbbk) \simeq \Bbbk[t,t^{-1}]
$. This yields an equivalence of monoidal categories 
$$ 
\LMod_{\Omega^2_*X}(\Mod_{\Bbbk}) \simeq 
\Mod_{\Bbbk[t,t^{-1}]}.
$$
We can use this to obtain an interesting alternative description of  $\LocSysCat(X;\Bbbk)$. 
Indeed, we can write a chain of equivalences  
$$
\psi\colon
\LocSysCat(X;\Bbbk) \xrightarrow[\ref{conj:infinityn}]{\simeq} 
\operatorname{Lin}_{\Bbbk[t,t^{-1}]}\PrLU
\overset{\simeq}{\longrightarrow} 
\operatorname{Lin}_{\Qcoh(\Gm)}\PrLU 
\overset{\simeq}{\longrightarrow}
 \operatorname{ShvCat}(\Gm)
$$
where the last equivalence follows from the fact that $\Gm$, being affine, is obviously $1$-affine. Thus categorical local systems on $X$ can be described equivalently as quasi-coherent quasi-coherent sheaves of categories over $\Gm$.

We remark that $\psi$ can be seen as an instance of 3d Homological Mirror Symmetry. Let us briefly explain why this is the case. The pair of spaces 
$$
\mathrm{T}^*\mathbf{B}\Gm \longleftrightarrow \mathrm{T}^*\Gm
$$
is one of the basic examples of 3d mirror partners. At least if $\Bbbk=\mathbb{C}$, the category $\LocSysCat(X;\Bbbk)$ can be viewed as (a subcategory of) the category of 3d A-branes on $\mathrm{T}^{\ast}\mathbf{B}\Gm$. The key observation here is that topologically we have a homotopy equivalence
$$
\BGm(\CC) \simeq \CC\PP^{\scriptstyle\infty}
$$
and the category of 3d A-branes of a cotangent stack is expected to contain local systems of categories over the base; we refer to \cite{teleman} for a fuller discussion of this point. Conversely,   $\operatorname{ShvCat}(\Gm)$  is (a subcategory of)  the category of 3d B-branes on $\mathrm{T}^*\Gm$. From this perspective, the equivalence $\psi$ implements a dictionary relating A-branes on $\mathrm{T}^*\mathbf{B}\Gm$ and B-branes on its mirror, thus paralleling closely the classical 2d HMS story.

Next, let us show that $X$ is not $1$-affine; see also \cite{teleman} for a similar discussion. Note that it is enough to show that the global sections functor 
$$
\Gamma(X,-) \colon\LocSysCat(X;\Bbbk) \longrightarrow \mathrm{Lin}_{\Bbbk}\PrLU
$$ 
is not conservative. This is easily done directly using the equivalences provided by \cref{conj:infinityn} 
\begin{equation}
\label{equival}
\LocSysCat(X;\Bbbk) \simeq
\LMod_{S^1}{\lp\mathrm{Lin}_{\Bbbk}\PrLU\rp}\simeq\ \operatorname{Lin}_{\Bbbk[t,t^{-1}]}\PrLU.
\end{equation}
We can easily classify the $\Bbbk[t,t^{-1}]$-linear structures on $\Mod_{\Bbbk}$: these are equivalently described as characters of $\pi_2(\CC\PP^{\scriptstyle\infty})\cong \ZZ$ (see \cite[Propositions $2.16$ and $2.21$]{Pascaleff_Pavia_Sibilla_Local_Systems}). So, an $S^1$-action on $\Mod_{\Bbbk}$ corresponds to the choice of a non-trivial scalar $\lambda\in\Bbbk^{\times}$, which is the image of $t$ under an $\Ebb_2$-morphism $\Bbbk[t,t^{-1}]\to\Bbbk$. Let us denote by $\Mod_{\Bbbk}(\lambda)$ the corresponding categorical $S^1$-module.

Now, the global section functor on $\LocSysCat(X;\Bbbk)$ is corepresented by the trivial local system, which is the unit with respect to the ordinary monoidal structure on   
$\LocSysCat(X;\Bbbk)$. We denote this object by 
$$
\underline{\smash{\mathrm{LocSys}}}(-) \in \LocSysCat(X;\Bbbk).
$$
Under equivalence \eqref{equival}, the object $\underline{\smash{\mathrm{LocSys}}}(-)$ is mapped to $\Mod_{\Bbbk}(1)$. 

Notice that, for any invertible $\lambda\in\Bbbk^{\times}$, $\Mod_{\Bbbk}(\lambda)$ can be seen as the category of $\Bbbk(\lambda)$-modules inside $\Mod_{\Bbbk[t,t^{-1}]}$, where $\Bbbk(\lambda)$ is the commutative $\Bbbk[t,t^{-1}]$-algebra on the underlying $\Bbbk$-module $\Bbbk$ determined by the evaluation $\operatorname{ev}_{\lambda}\colon\Bbbk[t,t^{-1}]\to\Bbbk$; in other words:$$\Mod_{\Bbbk}(\lambda)\simeq\Mod_{\Bbbk(\lambda)}{\lp\Mod_{\Bbbk[t,t^{-1}]}\rp}.$$
Invoking the categorical Eilenberg--Watts theorem, we have then\begin{align*}
\FuninL_{\Bbbk[t,t^{-1}]}{\lp\Mod_{\Bbbk}(1),\hsp\Mod_{\Bbbk}(\lambda)\rp}&\simeq\FuninL_{\Bbbk[t,t^{-1}]}{\lp\Mod_{\Bbbk(1)}{\lp\Mod_{\Bbbk[t,t^{-1}]}\rp},\hsp\Mod_{\Bbbk}(\lambda)\rp}\\&\simeq{_{\Bbbk(1)}\!\operatorname{BMod}_{\Bbbk(\lambda)}}{\lp\Mod_{\Bbbk[t,t^{-1}]}\rp}\\&\simeq\Mod_{\Bbbk(1)\otimes_{\Bbbk[t,t^{-1}]}\Bbbk(\lambda)}.
\end{align*}
But now an easy homological computation shows that $\Bbbk(1)\otimes_{\Bbbk[t,t^{-1}]}\Bbbk(\lambda)$ is 0 whenever $\lambda\neq1$, and so the $S^1$-fixed points are trivial.

It might be useful to revisit the previous calculation from a geometric standpoint using equivalence $\psi$. Let 
$$
\iota_\lambda: \mathrm{Spec}(\Bbbk) \longrightarrow \Gm
$$
be the $\Bbbk$-rational point $\lambda \in \Gm(\Bbbk)$.  
Under $\psi$, the object $\Mod_{\Bbbk}(\lambda)$ becomes a categorified \emph{skyscraper sheaf}. That is, it is the quasi-coherent sheaf of categories obtained by pushing-forward the unit along the functor
$$
\iota_{\lambda,*}: \operatorname{ShvCat}(\mathrm{Spec}(\Bbbk)) \longrightarrow  \operatorname{ShvCat}(\Gm)
$$ 
The computation above shows that, as expected, skyscraper sheaves at different points are mutually orthogonal.
\end{exmp}

\begin{exmp}
\label{exmp:sphere}
Let us consider next the case $X=S^1\simeq K(\ZZ,1)$. 
We can prove directly   that its Betti stack is $1$-affine as follows.  Note first that $\LocSysCat(S^1;\Bbbk)$ is the same as the category of $\Bbbk$-linear presentable categories equipped with a choice of an autoequivalence $F\colon\scrC\simeq\scrC$. The latter, in turn, is equivalent to the category of $\QCoh(\BGm)$-linear presentable categories (\cite[Example $0.4$]{gammage2023perverse}). Since $\BGm$ is $1$-affine (\cite[Remark $2.5.2$]{1affineness}), it follows that
\begin{equation}
\label{hmssmh}
\ShvCat(S^1_{\operatorname{B}})\simeq\LocSysCat(S^1;\Bbbk)\simeq\operatorname{Lin}_{\QCoh(\BGm)}\PrLU\simeq\ShvCat(\BGm).\end{equation}

The latter category is also equivalent to the category $\operatorname{Lin}_{\QCoh(\Gm)}\PrLU$ of $\QCoh(\Gm)$-linear presentable categories, where now $\QCoh(\Gm)$ is seen as a symmetric monoidal category via the convolution tensor product induced by the $\Einf$-group structure on $\Gm$. This can be deduced by concatenating the equivalences $(10.1)$ and $(10.4)$ in \cite{1affineness}; an earlier proof of this fact can be found in \cite{benzvi2012morita}.

Equipped with this monoidal structure, $\QCoh(\Gm)$ is monoidally equivalent to the category of representations of $\ZZ$ inside $\Mod_{\Bbbk}$, which is in turn equivalent to the category $\LocSys(S^1;\Bbbk)$ equipped with its point-wise monoidal structure. This is precisely $\QCoh(S^1_{\operatorname{B}}).$ We deduce that there is an equivalence 
$$
\ShvCat{\lp S^1_{\operatorname{B}}\rp}\simeq \operatorname{Lin}_{\QCoh(S^1_{\operatorname{B}})}\PrLU 
$$
i.e., $S^1_{\operatorname{B}}$ is $1$-affine, as we wanted to show. We remark that equivalence \eqref{hmssmh} can be viewed as the opposite direction of 3d HMS with respect to the one considered in \cref{exmp:counterexample}, see \cite{gammage2023perverse} for more information.
\end{exmp}

\section{Categorified Koszul duality via coaffine stacks}
\label{sec:nkoszulcoaffine}
This section contains our main contribution to $\mathbb{E}_n$-Koszul duality, at least in the topological setting. First, recall the following classical story, essentially known since \cite[Theorem 2.1]{rothenbergsteenrod}. Given a pointed connected space $X$, then for any base commutative ring spectrum $\Bbbk$ the chain complex
\[
\mathrm{C}_{\bullet}(\Omega_*X;\Bbbk)\coloneqq\Sigma_+^{\scriptstyle\infty}(\Omega_*X)\otimes_{\Sphere}\Bbbk
\]
is an augmented associative $\Bbbk$-algebra under the Pontrjagin product. Taking the bar construction one gets the chain coalgebra
\[
\Bbbk\otimes_{\mathrm{C}_{\bullet}(\Omega_*X;\Bbbk)}\Bbbk\simeq\mathrm{C}_{\bullet}(X;\Bbbk),
\]whose $\Bbbk$-linear dual is the augmented associative cochain $\Bbbk$-algebra
\[
\CX\coloneqq\Mapin_{\Sp}(\Sigma_+^{\scriptstyle\infty}X,\Bbbk).
\]
In other words: $\CX$ is the $\Ebb_1$-Koszul dual of $\mathrm{C}_{\bullet}(\Omega_*X;\Bbbk)$. More generally, for any $n\geqslant1$ and a pointed $(n-1)$-connected space $X$, one can consider the dual of the \textit{$n$-fold} bar construction, and obtain an $\Ebb_n$-Koszul duality statement of the form
\[
\mathrm{C}_{\bullet}(\Omega^n_*X;\Bbbk)^{!_{\Ebb_n}}\simeq\CX.
\]
If moreover $X$ is $n$-connected, $\Bbbk$ is a field, and each vector space $\mathrm{H}^i(X;\Bbbk)$ is finitely generated, then the $\Ebb_n$-Koszul duality is involutive, in the sense that we also have the dual equivalence
\[
\CX^{!_{\Ebb_n}}\simeq \mathrm{C}_{\bullet}(\Omega^n_*X;\Bbbk),
\]
see for example \cite[Theorem 4.4.5]{dagx}.

When $n=1$ and $\Bbbk$ is a field of characteristic zero, there is also a kind of Morita equivalence relating modules over $\operatorname{C}_{\bullet}(\Omega_*X;\Bbbk)$ and modules over
$\operatorname{C}^\bullet(X; \Bbbk)$. The right statement is subtle: it requires to either restrict to appropriately bounded modules; or to change the notion of module we work with. In particular, if we work with \emph{ind-coherent} modules, i.e. if we replace  $\LMod_{\operatorname{C}^\bullet(X;\Bbbk)}$ with $\IndCoh_{\operatorname{C}^\bullet(X;\Bbbk)}$, we do obtain an equivalence
\begin{equation}
\label{eqqeq2}
\LMod_{\operatorname{C}_{\bullet}(\Omega_*X;\Bbbk)} 
\simeq  
\IndCoh_{\operatorname{C}^\bullet(X;\Bbbk)}.\tag{$\star$}
\end{equation}
In this section, we will explain how to define a category that should be viewed as the category of iterated ``ind-coherent'' modules over 
$\operatorname{C}^\bullet(X;\Bbbk)$. This will allow  us to prove our main result \cref{thm:mainkoszuln}, which provides a kind of  $n$-categorical Morita equivalence  relating
$
\mathrm{C}_\bullet(\Omega_*^nX;\Bbbk)$ and $\CX 
$, and recovers \eqref{eqqeq2} when $n=1$.

In fact we will not attempt to define directly a categorification of the notion of ind-coherent module.  Rather, the key idea in our approach is using the theory \textit{coaffine stacks} introduced in \cite{toenchampsaffines} and further studied in \cite{dagviii}. We stress that our approach is new even in the classical case of $\mathbb{E}_1$-Koszul duality, although in that setting it is ultimately equivalent to \eqref{eqqeq2}. 
\subsection{Koszul duality and quasi-coherent sheaves over coaffine stacks}
\label{sec:koszul_coaffine}
\begin{notation}
In this section, $\Bbbk$ always denotes a ground \textit{(discrete) field} of characteristic zero.
\end{notation}
We start by recalling some fundamental results in the theory of stacks associated to coconnective $\Bbbk$-algebras defined over a field $\Bbbk$ of characteristic zero, that will be used extensively in \cref{sec:mainthmkoszul}. We will mostly adopt the conventions from \cite{dagviii}. In particular, as in \cite{dagviii}, we will call these objects \textit{coaffine} rather than \textit{affine} stacks, to stress the difference with affine schemes (which in turn are the spectra of \textit{connective} $\Bbbk$-algebras).

\begin{defn}
\label{def:coaffine}
Let $n\geqslant1$ be an integer.
\begin{defenum}
    \item \label{def:coaffine1}We say that a $\Bbbk$-algebra $A$ is \textit{$n$-coconnective} if the structure morphism $\Bbbk\to A$ induces an isomorphism of abelian groups$$\Bbbk\overset{\cong}{\longrightarrow}\pi_0A$$and the homotopy groups $\pi_iA$ vanish for both $i\geqslant 1$ and $-n< i<0$. If $n=1$, we shall simply say that $A$ is \textit{coconnective}.
    \item \label{def:coaffine2}A \textit{coaffine stack} is a stack $X$ which is equivalent to the stack$$\Map_{\CAlg_{\Bbbk}}{\lp A,\hsp -\rp}\colon\CAlg_{\Bbbk}^{\geqslant0}\longrightarrow\scrS$$for some coconnective $\Bbbk$-algebra $A$. In this case, we shall say that $X$ is the \textit{cospectrum of $A$}, and we shall denote it as $\cSpec(A)$.
    \end{defenum}
\end{defn}
\begin{parag}
\label{parag:mainpropertiescoaffine}    
Coaffine stacks behave in a very similar way to affine schemes: for any stack $\scrY$ defined over $\Bbbk$, giving a morphism $\scrY\to\cSpec(A)$ is equivalent to giving a morphism of commutative $\Bbbk$-algebras $A\to\Gamma(\scrY,\mathscr{O}_{\scrY})$ (\cite[Theorem $4.4.1$]{dagviii}). Moreover, any coaffine stack $\scrX\simeq\cSpec(A)$ can be realized as the left Kan extension of its restriction to \textit{discrete} $\Bbbk$-algebras (i.e., coaffine stacks are \textit{$0$-coconnective} in the sense of \cite[Chapter $2$, §$1.3.4$]{studyindag1}). In particular, defining the category of classical 
 affine schemes$$\operatorname{Aff}^{\operatorname{cl}}_{\Bbbk}\coloneqq\lp\CAlg^{\operatorname{disc}}_{\Bbbk}\rp^{\op}$$as the opposite of the category of discrete $\Bbbk$-algebras, we have that for any coaffine stack $\scrX\simeq\cSpec(A)$ the inclusion
$$\lp\operatorname{Aff}_{\Bbbk}^{\operatorname{cl}}\rp_{/\scrX}\simeq\lp\lp\CAlg^{\operatorname{disc}}_{\Bbbk}\rp_{A/}\rp^{\op}\subseteq\lp\lp\CAlg_{\Bbbk}^{\geqslant0}\rp_{A/}\rp^{\op}\simeq\lp\operatorname{Aff}_{\Bbbk}\rp_{/\scrX}$$is cofinal, as stated in the proof of \cite[Chapter $3$, Lemma $1.2.2$]{studyindag1}. Moreover, coaffine stacks are stable under products and fiber products: we provide a short proof for later reference.
  \begin{lemma}
\label{lemma:cspecmonoidal}
Let $A$ be a coconnective $\Bbbk$-algebra, and let $A\to R$ and $A\to S$ be two $A$-algebras. Assume that $R$ and $S$ are coconnective as $\Bbbk$-algebras. Then we have a natural equivalence of stacks$$\cSpec(R\otimes_AS)\simeq\cSpec(R)\times_{\cSpec(A)}\cSpec(S).$$
 \end{lemma}
 \begin{proof}
 When $\Bbbk$ is a field and $A$ is a coconnective $\Bbbk$-algebras, then coconnective $A$-modules are stable under tensor product over $A$ (\cite[Proposition $4.5.4.(6)$]{dagviii}). So, it is sufficient to notice that the Yoneda embedding $\smash{\lp\CAlg^{\leqslant0}_{\Bbbk}\rp^{\op}}\subseteq\mathrm{St}_{\Bbbk}$ sends pushouts of commutative $\Bbbk$-algebras to fiber products of stacks. 
 \end{proof}
However, contrarily to the case of affine schemes, the category$$\QCoh(\scrX)\coloneqq\lim_{\substack{\Spec(R)\to \scrX\\ R\in \CAlg^{\geqslant0}_{\Bbbk}}}\Mod_R$$of quasi-coherent sheaves over a coaffine stack $\scrX\simeq\cSpec(A)$ does not recover the category of $A$-modules. Rather, we have the following result.
\end{parag}
\begin{proposition}[{\cite[Propositions $3.5.2$ and $3.5.4$; Remark $3.5.6$]{dagviii}}]
\label{prop:luriecoaffine}
Let $A$ be a coconnective $\Bbbk$-algebra, let $\scrX\coloneqq\cSpec(A)$ be its corresponding coaffine stack. Let $\eta\in \scrX(\Bbbk)$ be any $\Bbbk$-point of $\scrX$.
\begin{enumerate}
    \item There exists a right complete $t$-structure on $\Mod_A$ defined as follows.
    \begin{itemize}
    \item The coconnective objects are detected via the forgetful functor$$\oblv_A\colon\Mod_A\longrightarrow\Mod_{\Bbbk}.$$
    \item The connective objects are those $A$-modules such that, for \textit{any} morphism of commutative $\Bbbk$-algebras $A\to R$ with $R$ connective, the $R$-module $M\otimes_A R$ is connective.
    \end{itemize}
    \item There exists a both left and right complete $t$-structure on $\QCoh(\scrX)$, whose heart is equivalent to the ordinary abelian category of algebraic representations of the prounipotent group scheme $\pi_1(\scrX,\eta)$. Both connective and coconnective objects are detected via the pullback functor$$\eta^*\colon\QCoh(\scrX)\longrightarrow\QCoh(\Spec(\Bbbk))\simeq\Mod_{\Bbbk}.$$
    \item Let $F\colon\Mod_A\to\QCoh(\scrX)$ be the natural symmetric monoidal pullback functor. Then $F$ exhibits the $t$-structure of $\QCoh(\scrX)$ as the left completion of the $t$-structure on $\Mod_A$.
\end{enumerate}
\end{proposition}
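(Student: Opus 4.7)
My plan is to treat the three parts sequentially, leveraging standard $t$-structure machinery for part (1), the limit presentation of $\QCoh(\scrX)$ for part (2), and the universal property of left completion for part (3).

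For part (1), I would define the connective subcategory $(\Mod_A)_{\geq 0}$ as specified and observe that it is closed under colimits, extensions, and the shift $[1]$; this follows formally because base change $- \otimes_A R$ preserves colimits and the analogous subcategory of $\Mod_R$ enjoys these stability properties for each connective $A \to R$. Applying \cite[Proposition $1.4.4.11$]{ha} then yields an accessible $t$-structure on $\Mod_A$. To match the coconnective part with the description via $\oblv_A$, I would compute $\Map_{\Mod_A}(N,M)$ for $N \in (\Mod_A)_{\geq 1}$ and $M$ whose underlying $\Bbbk$-module is coconnective, using a cobar filtration of $N$ by free $A$-modules together with the fact that $A$ itself is coconnective. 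Right completeness reduces to showing that an $A$-module with vanishing homotopy must be zero, which is immediate from the forgetful detection.

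For part (2), I would exploit the limit presentation $\QCoh(\scrX) \simeq \lim_{\Spec R \to \scrX} \Mod_R$ and equip the limit with the pointwise $t$-structure: an object is connective (resp.~coconnective) precisely when each of its components is. Both left and right completeness descend from each factor $\Mod_R$. The subtler detection via $\eta^{\ast}$ exploits that $\scrX$ is coaffine with $\pi_0 A \cong \Bbbk$: any $\Bbbk$-point is related to $\eta$ by the action of the group $\pi_1(\scrX, \eta)$, so detection at a single point is enough. To identify the heart with representations of $\pi_1(\scrX, \eta)$ I would run a Tannakian argument: show that $\QCoh(\scrX)^{\heartsuit}$ is a symmetric monoidal abelian category whose fiber functor $\eta^{\ast}$ into $\Mod_\Bbbk^{\heartsuit}$ recovers, via Tannakian duality, an affine group scheme, and then check that this group is prounipotent, which follows from the coconnectivity of $A$ combined with $\pi_0 A \cong \Bbbk$.

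For part (3), $t$-exactness of $F$ is transparent from the definitions in (1) and (2). By the universal property of left completion, $F$ factors as $\Mod_A \to \widehat{\Mod}_A \xrightarrow{\widehat F} \QCoh(\scrX)$ with $\widehat F$ a $t$-exact functor out of the left completion. Since both sides are left complete, I would reduce the equivalence of $\widehat F$ to an equivalence on the heart, which is provided by part (2). The main conceptual obstacle of the whole proposition is the Tannakian step in part (2): the machinery required to produce the prounipotent group scheme $\pi_1(\scrX, \eta)$ from the symmetric monoidal abelian category $\QCoh(\scrX)^{\heartsuit}$, and to verify its prounipotent nature, is substantially more involved than the formal $t$-structure arguments surrounding it, and ultimately relies on developing enough of the theory of coaffine stacks (as in \cite{toenchampsaffines} and \cite{dagviii}) to control the monoidal structure on $\QCoh$.
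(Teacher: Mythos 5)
This proposition is imported verbatim from Lurie's DAG VIII (Propositions 3.5.2, 3.5.4 and Remark 3.5.6) and the paper supplies no proof of its own, so your sketch is really being measured against Lurie's argument rather than against anything in this text. The overall architecture you propose (generate a $t$-structure on $\Mod_A$, descend one to the limit $\QCoh(\scrX)$, identify the latter as a left completion) is the right one, but the decisive step in part (3) does not go through as written. A $t$-exact functor between left-complete stable categories that induces an equivalence on hearts need \emph{not} be an equivalence: one must also know that it induces isomorphisms on all $\Ext$-groups between heart objects, i.e.\ that it is an equivalence on the full eventually-coconnective subcategories. What actually has to be shown is that $F$ restricts to an equivalence $(\Mod_A)_{\leqslant n}\xrightarrow{\ \simeq\ }\QCoh(\scrX)_{\leqslant n}$ for every $n$; since the left completion of a $t$-structure depends only on these truncated pieces, the statement then follows. (This is exactly the strategy the paper itself uses later in the proof of \cref{prop:tstructureindcoh}, which is the correct template.) Your heart-level reduction is strictly weaker, and it is also circular as stated: part (2) identifies the heart of $\QCoh(\scrX)$ with $\Rep(\pi_1(\scrX,\eta))$ but says nothing about the heart of $\Mod_A$, so it cannot "provide" the equivalence of hearts you invoke.

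The other two parts are closer to correct but gloss over the points that carry the content. In part (2), the connective part of a limit of module categories is indeed defined pointwise, but the coconnective part is the right orthogonal and is \emph{not} pointwise for a general prestack; the special feature of a coaffine stack is that $\Spec(\Bbbk)\to\cSpec(A)$ admits a simplicial resolution by affines with discrete terms and zeroth term $\Spec(\Bbbk)$, which is what makes $\eta^{*}$ conservative and $t$-exact in both directions. Your observation that all $\Bbbk$-points are conjugate under $\pi_1(\scrX,\eta)$ only addresses independence of the chosen point, not why a single point detects the $t$-structure. In part (1), applying \cite[Proposition 1.4.4.11]{ha} to the base-change-defined subcategory requires its accessibility, which is not obvious (Lurie instead generates $(\Mod_A)_{\geqslant 0}$ from $A$ under colimits and extensions and proves the base-change characterization afterwards); and right completeness is not "immediate from the forgetful detection," because connectivity in this $t$-structure is not read off from the underlying spectrum --- $A$ itself is connective in this $t$-structure while its underlying $\Bbbk$-module is coconnective --- so showing $\bigcap_n(\Mod_A)_{\geqslant n}=0$ requires an actual argument.
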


\begin{defn}[{\cite[Definitions $3.0.1$, $3.1.13$ and $3.4.1$]{dagx}}]
\label{def:smallness}
Let $A$ be an associative $\Bbbk$-algebra, let $M$ be a left $A$-module.
\begin{defenum}
    \item\label{locallysmall}We say that $M$ is \textit{locally small} if $\pi_kM$ is a finite dimensional $\Bbbk$-vector space for any integer $k$. We say that $A$ is \textit{locally small} if its underlying $A$-module is locally small.
    \item\label{smallmodules} We say that $M$ is \textit{small} if$$\pi_{\bullet}M\coloneqq\bigoplus_{k\in\ZZ}\pi_kM$$is a finite dimensional $\Bbbk$-vector space. 
    \item\mylabel{smallalgebras}We say that $A$ is \textit{small} if its underlying $A$-module is connective and small, and the structure morphism $\Bbbk \to A$ induces an isomorphism of discrete $\Bbbk$-algebras $\Bbbk\cong \pi_0A/\mathfrak{n}$, where $\mathfrak{n}$ is the nilradical of $\pi_0A$.
\end{defenum}
\end{defn}
\begin{remark}
\label{remark:stablesmall}
Let $A$ be an associative $\Bbbk$-algebra, and let $\LMod^{\operatorname{sm}}_A$ be the full sub-category of $\LMod_A$ spanned by small objects. We have a Cartesian  diagram of categories$$\begin{tikzpicture}[scale=0.75]
\node(a) at (-2,2){$\LMod^{\operatorname{sm}}_A$};
\node (b) at (2,2){$\Perf_{\Bbbk}$};
\node (c) at (-2,0){$\LMod_A$};
\node (d) at (2,0){$\Mod_{\Bbbk}.$};
\draw[->,font=\scriptsize] (a) to node[above]{$\oblv_A$}(b);
\draw[->,font=\scriptsize] (c) to node[below]{$\oblv_A$}(d);
\draw[right hook->, font=\scriptsize] (a) to node[above]{}(c);
\draw[right hook->, font=\scriptsize] (b) to node[above]{}(d);
\end{tikzpicture}$$
Forgetting the $A$-module structure commutes with all limits and colimits, and $\Perf_{\Bbbk}$ is stable under finite limits and colimits inside $\Mod_{\Bbbk}$. It follows that $\LMod^{\operatorname{sm}}_A$ is a stable (but of course not complete or cocomplete) sub-category of $\LMod_A$. In particular, its ind-completion$$\IndCoh^{\operatorname{L}}_A\coloneqq\Ind\lp\LMod^{\operatorname{sm}}_A\rp$$is stable (\cite[Proposition $1.1.3.6$]{ha}). Moreover, since $\LMod_A^{\operatorname{sm}}$ admits all finite coproducts, it follows that $\IndCohL_A$ admits \textit{all} coproducts (since they are realized as filtered colimits of finite coproducts).  \end{remark}
\begin{defn}
\label{def:indcoh}
Let $A$ be an associative $\Bbbk$-algebra. The category $\IndCohL_A$ is the \textit{category of left ind-coherent modules on $A$}.
\end{defn}
\begin{warning}
\label{warning:indcoh}
The notation can be misleading: in \cite{studyindag1}, \textit{ind-coherent sheaves} over an affine scheme are interpreted as bounded modules with coherent homology. Rather, our definition of ind-coherent modules matches the one in \cite[Definition $3.4.4$]{dagx}. Yet, if $A$ is a discrete local Artinian ring, or a small $\Bbbk$-algebra in the sense of \cref{smallalgebras}, then the two notions coincide.
\end{warning}
In the following, we revisit $\Ebb_1$-Koszul duality for associative algebras and its formulation in terms of correspondences between categories of ind-coherent and quasi-coherent modules. Recall that, over any field $\Bbbk$, an augmented associative $\Bbbk$-algebra $A$ admits a $\Ebb_1$-Koszul dual $A^!$ if and only if there exists a morphism
\[
\mu\colon A\otimes A^!\longrightarrow\Bbbk
\]
which exhibits $A^!$ as the classifying object $\Mapin_{A}(\Bbbk,\hsp\Bbbk)$ of morphisms of left $A$-modules from $\Bbbk$ to itself, see \cite[Remark $3.1.12$]{dagx}.
\begin{proposition}
\label{parag:koszul}
Let $A$ be an augmented associative $\Bbbk$-algebra such that the augmentation of $A$ induces an isomorphism$$\pi_0A\overset{\cong}{\longrightarrow}\Bbbk.$$Suppose that the Koszul dual $A^!$ is locally small as a $\Bbbk$-module. Then there is an equivalence of categories
\begin{align}
\label{equiv:koszuldualitymodules}
\IndCoh^{\operatorname{L}}_{A}\simeq\RMod_{A^!}.
\end{align}
\end{proposition}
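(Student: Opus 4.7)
The plan is to exhibit the augmentation module $\Bbbk$ as a compact generator of $\IndCohL_A$ and then invoke the Schwede--Shipley recognition theorem (\cite[Proposition $7.1.2.1$]{ha}), which will yield an equivalence $\IndCohL_A\simeq\RMod_{\End_A(\Bbbk)}$. Combined with the identity $\End_A(\Bbbk)\simeq A^!$ — which holds by the very definition of the $\Ebb_1$-Koszul dual recalled immediately before the statement — this produces exactly the desired equivalence.

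First I would observe that $\Bbbk$, viewed as a left $A$-module via the augmentation, lies in $\LMod^{\operatorname{sm}}_A$: it is connective and its total homotopy $\pi_{\bullet}\Bbbk\simeq\Bbbk$ is one-dimensional. Hence its image under the fully faithful ind-Yoneda embedding $\LMod^{\operatorname{sm}}_A\hookrightarrow\IndCohL_A$ is compact. To see that $\Bbbk$ is moreover a generator I would show that the thick subcategory of $\LMod^{\operatorname{sm}}_A$ it generates is already everything. Given an arbitrary $M\in\LMod^{\operatorname{sm}}_A$, the standard $t$-structure on $\LMod_A$ presents $M$ as a finite Postnikov tower whose successive fibers are shifts of the homotopy groups $\pi_nM$; the hypothesis $\pi_0A\cong\Bbbk$ forces each $\pi_nM$ to be a trivial $\pi_0A$-module, and smallness of $M$ forces each $\pi_nM$ to be finite-dimensional over $\Bbbk$, so every successive fiber is a finite direct sum of shifts of $\Bbbk$. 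This realises $M$ as a finite iterated extension of shifted copies of $\Bbbk$, so the thick closure of $\Bbbk$ fills $\LMod^{\operatorname{sm}}_A$ and $\Bbbk$ becomes a compact generator of $\IndCohL_A$.

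The identification $\End_{\IndCohL_A}(\Bbbk)\simeq A^!$ is then immediate: the full faithfulness of the ind-Yoneda embedding, together with that of the inclusion $\LMod^{\operatorname{sm}}_A\subseteq\LMod_A$, reduces it to the definitional equivalence $\End_{\LMod_A}(\Bbbk)\simeq A^!$. The local smallness hypothesis on $A^!$ plays an auxiliary but important role here: it guarantees that $A^!$ is a genuinely locally finite $\Bbbk$-linear spectrum, so that $A^!$ itself belongs to $\LMod^{\operatorname{sm}}_{A^!}$ and the target $\RMod_{A^!}$ is in turn the ind-completion of its own small modules, thus keeping the equivalence symmetric and compatible with the ind-coherent framework. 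I expect the most delicate step to be the Postnikov argument: one must verify that the finite tower can be assembled inside $\LMod^{\operatorname{sm}}_A$ (rather than merely in the ambient $\LMod_A$) and that the resulting extensions really live in the thick closure of $\Bbbk$. Once that is in place, the remainder reduces to a direct application of the compact-generator recognition theorem and no further technical work is needed.
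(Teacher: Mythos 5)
Your proof is correct, and it reaches the conclusion by a slightly different route than the paper. Both arguments hinge on the same key fact, namely that $\LMod^{\operatorname{sm}}_A$ is the thick subcategory of $\LMod_A$ generated by the augmentation module $\Bbbk$: the paper simply cites \cite[Remark 3.4.2]{dagx} for this, whereas you reprove it via the finite Postnikov tower whose layers are finite sums of shifts of $\Bbbk$ (correctly using the hypothesis $\pi_0A\cong\Bbbk$; the assembly inside the thick closure is unproblematic since thick subcategories are closed under fibers and cofibers computed in the ambient category). From there the paper invokes \cite[Proposition 3.5.6]{dagx} to see that the contravariant Koszul duality functor restricts to an equivalence $\LMod^{\operatorname{sm}}_A\simeq\Perf^{\op}_{A^!}$, composes with $A^!$-linear duality, and passes to ind-completions; you instead observe that $\Bbbk$ is a compact generator of $\IndCohL_A=\Ind(\LMod^{\operatorname{sm}}_A)$ and apply Schwede--Shipley together with the definitional identification $\Endin_{\LMod_A}(\Bbbk)\simeq A^!$. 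These are essentially the same mathematics packaged differently: Schwede--Shipley is proved precisely by showing that $\Mapin(\Bbbk,-)$ identifies the thick closure of $\Bbbk$ with $\Perf_{\Endin(\Bbbk)}$ and then ind-completing, so your version is a touch more self-contained while the paper's leans on Lurie's already-established Koszul duality equivalence on compact objects.

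One point to correct: your explanation of the role of the local smallness of $A^!$ is off. Local smallness of $A^!$ (finite-dimensional homotopy in each degree) does \emph{not} put $A^!$ in $\LMod^{\operatorname{sm}}_{A^!}$ --- that would require $A^!$ to be \emph{small}, i.e.\ with finite-dimensional total homotopy --- and $\RMod_{A^!}$ is in any case the ind-completion of $\Perf_{A^!}$, not of the small modules. In fact your argument, like the paper's (see the remark immediately following the proposition in the text), does not really use this hypothesis for the bare equivalence \eqref{equiv:koszuldualitymodules}; it is needed downstream, where the $t$-structure comparisons of \cref{prop:tstructureindcoh} and \cref{prop:qcoh(cspec)} require $A^!$ to be connective and locally small. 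So this is a misattribution rather than a gap in the proof.
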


\begin{proof}
Using \cite[Remark $3.4.2$]{dagx}, we know that $\LMod^{\operatorname{sm}}_{A}$ is the thick sub-category spanned by $\Bbbk$ inside $\LMod_{A}$ -- i.e., it is the smallest stable category sitting inside $\LMod_{A}$ containing $\Bbbk$ and closed under retracts. In particular, the Koszul duality functor for modules$$\LMod^{\op}_{A}\longrightarrow\LMod_{A^!}$$restricts to an equivalence between $\LMod^{\operatorname{sm}}_{A}$ and $\Perf^{\op}_{A^!}$ (\cite[Proposition $3.5.6$]{dagx}), so applying the $A^!$-linear duality self-functor we have$$\LMod^{\operatorname{sm}}_{A}\overset{\simeq}{\longrightarrow}\Perf_{A^!}$$hence an equivalence on their ind-completions.
\end{proof}

\begin{remark}
At first sight, \cite[Proposition $3.5.2$]{dagx} seems to imply that, for  \cref{parag:koszul} to hold, we  need to assume that   $A^!$ is small. Actually, this is not the case: the smallness is only needed in order to have an equivalence of \textit{functors}
from $\Alg_{\Bbbk}^{\operatorname{sm}}$ to $\CatVinfty[1]$ between $\IndCoh^{\operatorname{L}}_{(-)}$ and $\RMod_{(-)^!}.$ Of course, if we do not assume our algebras to be small the tensor product does not preserve small modules, so the functor $\IndCohL_{(-)}$ is not even well-defined; yet, the \textit{point-wise} equivalence \eqref{equiv:koszuldualitymodules} still applies under our, milder, assumptions on $A$.
\end{remark}
We shall now equip $\IndCoh^{\operatorname{L}}_{A^!}$  with a $t$-structure using the following general recipe.

\begin{lemma}[{\cite[Chapter IV, Lemma $1.2.4$]{studyindag1}}]
\label{lemma:tstructuregr}
Let $\scrC$ be a (non  cocomplete) stable category, endowed with a $t$-structure. Then $\Ind(\scrC)$ carries a unique t-structure which is compatible with filtered colimits (i.e., such that truncation functors commute with filtered colimits), and for which the tautological inclusion $\scrC\subseteq\Ind(\scrC)$ is t-exact.
Moreover:
\begin{lemenum}
\item\label{tstructuregr1} The sub-categories $\Ind(\scrC)_{\geqslant0}$ and $\Ind(\scrC)_{\leqslant 0}$ are compactly generated by $\scrC_{\geqslant0}$ and $\scrC_{\leqslant0}$, respectively.
\item\label{tstructuregr2} Given any other stable category $\scrD$ equipped with a $t$-structure compatible with filtered colimits, any functor $F\colon\Ind(\scrC)\to\scrD$ is $t$-exact if and only if $\restr{F}{\scrC}$ is $t$-exact.
\end{lemenum}
\end{lemma}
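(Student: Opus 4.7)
The plan is to construct the $t$-structure on $\Ind(\scrC)$ by brute force, define $\Ind(\scrC)_{\geqslant 0}$ and $\Ind(\scrC)_{\leqslant 0}$ as the closures of $\scrC_{\geqslant 0}$ and $\scrC_{\leqslant 0}$ respectively inside $\Ind(\scrC)$ under filtered colimits, and then verify that this does give a $t$-structure with the desired properties. Right away, this choice is forced: any $t$-structure on $\Ind(\scrC)$ that is compatible with filtered colimits and for which $\scrC\subseteq\Ind(\scrC)$ is $t$-exact must contain $\scrC_{\geqslant 0}$ in its connective part and, being stable under filtered colimits, must contain the closures we have defined. A symmetric argument for the coconnective part, combined with the orthogonality axiom, will yield uniqueness once existence is established. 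Observe that this already makes \cref{tstructuregr1} automatic by construction, since the objects of $\scrC_{\geqslant 0}$ (resp.\ $\scrC_{\leqslant 0}$) are compact in $\Ind(\scrC)$ by Yoneda and, being closed under finite colimits in $\scrC$, yield compactly generated subcategories of $\Ind(\scrC)$.

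The crucial verification is the $t$-structure axiom. For the Hom-vanishing, take $X\in\Ind(\scrC)_{\geqslant 1}$ and $Y\in\Ind(\scrC)_{\leqslant 0}$. One first reduces to $X\in\scrC_{\geqslant 1}$ by writing $X$ as a filtered colimit, then invokes the fact that objects of $\scrC$ are compact in $\Ind(\scrC)$, so that $\Map(X,-)$ commutes with filtered colimits; this reduces the vanishing to the original $t$-structure on $\scrC$. For the existence of truncation triangles, for $X=\colim_{i\in I}X_i$ with $X_i\in\scrC$ along a filtered diagram, the natural plan is to promote the assignment $i\mapsto\tau_{\leqslant 0}X_i$ to a filtered diagram in $\scrC_{\leqslant 0}$ (using that $\tau_{\leqslant 0}$ on $\scrC$ is functorial), take its colimit $\tau_{\leqslant 0}X$ in $\Ind(\scrC)_{\leqslant 0}$, and check that the induced fiber sequence $\tau_{\geqslant 1}X\to X\to\tau_{\leqslant 0}X$ realizes the truncation. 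The technical fiddle — and the step I expect to be the main obstacle — is rectifying the pointwise truncations into a genuine diagram $I\to\mathrm{Fun}(\Delta^1,\scrC)$; in the $\infty$-categorical setting one typically performs this lift by applying the truncation functor to the diagram $I\to\scrC$ directly, and then invokes Joyal's theorem on colimits in arrow categories to ensure the colimit of the triangles is itself a fiber sequence.

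With existence and uniqueness in hand, \cref{tstructuregr2} then reduces to a direct verification: given $F\colon\Ind(\scrC)\to\scrD$ with $\restr{F}{\scrC}$ $t$-exact, any object of $\Ind(\scrC)_{\geqslant 0}$ is by construction a filtered colimit of objects of $\scrC_{\geqslant 0}$; since $F$ preserves filtered colimits (being a functor from an $\Ind$-category into a cocomplete target, one restricts attention to such $F$, as is standard in this setting) and $\scrD_{\geqslant 0}\subseteq\scrD$ is closed under filtered colimits by the compatibility assumption on $\scrD$, the image lies in $\scrD_{\geqslant 0}$. The same argument yields preservation of $\Ind(\scrC)_{\leqslant 0}$, and the converse direction is immediate because $\scrC\subseteq\Ind(\scrC)$ is $t$-exact. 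The whole argument is a direct categorification of the corresponding construction for abelian categories, and one should be able to cite it as a formal consequence of the stable Joyal--Lurie framework; the only nontrivial content is the coherent choice of truncations in the filtered colimit, which is resolved by invoking functoriality of $\tau_{\leqslant 0}$ and $\tau_{\geqslant 1}$ on $\scrC$ itself.
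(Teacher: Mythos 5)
This lemma is quoted from Gaitsgory--Rozenblyum and the paper supplies no proof of its own, so there is nothing internal to compare against; your reconstruction is essentially the standard argument from that source and is correct in substance. Defining $\Ind(\scrC)_{\geqslant0}$ and $\Ind(\scrC)_{\leqslant0}$ as the filtered-colimit closures of $\scrC_{\geqslant0}$ and $\scrC_{\leqslant0}$, checking orthogonality via compactness of the generators, and producing truncation triangles by applying the truncations of $\scrC$ levelwise to a presentation $X\simeq\colim_iX_i$ all work as you describe. The rectification worry you single out is indeed a non-issue for exactly the reason you give: $\tau_{\leqslant0}$ and $\tau_{\geqslant1}$ are honest functors on $\scrC$ (adjoints to the inclusions of the two halves), so one post-composes the diagram $I\to\scrC$ with $X\mapsto(\tau_{\geqslant1}X\to X\to\tau_{\leqslant0}X)$ and uses that filtered colimits in a stable category commute with fibers. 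Your uniqueness argument (both halves of any competing $t$-structure contain the ones you built, and two $t$-structures with nested halves on both sides coincide) is also fine, and in part (2) you are right that $F$ must implicitly be assumed to preserve filtered colimits, as it is in the cited source.

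One assertion is false as written, though harmlessly so: $\scrC_{\leqslant0}$ is \emph{not} closed under finite colimits in $\scrC$ --- the cofiber of a map between coconnective objects only lies in $\scrC_{\leqslant1}$, since $\pi_1$ of the cofiber receives a contribution from $\pi_0$ of the source. (The connective half $\scrC_{\geqslant0}$ genuinely is closed under finite colimits, being closed under suspensions, finite coproducts and extensions.) This does not damage \cref{tstructuregr1}: $\scrC_{\leqslant0}$ is still closed under finite coproducts, its objects are compact in $\Ind(\scrC)$ and hence in the filtered-colimit closure, and that closure is itself closed under filtered colimits by a cofinality argument, which is all that compact generation requires. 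But the justification should be corrected for the coconnective half. A second, cosmetic point: $\scrD_{\geqslant0}$ is closed under all colimits for \emph{any} $t$-structure (it is a left orthogonal class), so the compatibility hypothesis on $\scrD$ in \cref{tstructuregr2} is only genuinely used for the coconnective half.
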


\begin{proposition}
\label{prop:tstructureindcoh}
Let $A$ be a connective associative $\Bbbk$-algebra. Then $\IndCohL_A$ admits a right complete $t$-structure. Moreover, if $A$ is locally small the $t$-exact functor$$\Phi_{A}\colon\IndCoh^{\operatorname{L}}_{A}\longrightarrow\LMod_{A}$$induced by the natural inclusion $\LMod^{\operatorname{sm}}_A\subseteq\LMod_A$ exhibits $\LMod_A$ as the left completion of the $t$-structure on $\IndCohL_A$. 
\end{proposition}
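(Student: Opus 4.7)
The plan is to proceed in three stages: the first two yield the existence and right completeness of the $t$-structure on $\IndCohL_A$, and the third addresses the identification of $\LMod_A$ with its left completion under the local smallness hypothesis. First, I will show that the natural $t$-structure on $\LMod_A$ restricts to the non-cocomplete stable subcategory $\LMod^{\operatorname{sm}}_A$. This hinges on the connectivity of $A$: since the $t$-structure on $\LMod_A$ is defined so that (co)connectivity is detected by the forgetful functor to $\Mod_{\Bbbk}$, the homotopy groups $\pi_i\tau_{\geqslant n}M$ and $\pi_i\tau_{\leqslant n}M$ are subquotients of $\pi_iM$, hence still finite-dimensional over $\Bbbk$ whenever $M$ is small. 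Moreover each small module has only finitely many non-vanishing homotopy groups, so the restricted $t$-structure on $\LMod^{\operatorname{sm}}_A$ is bounded. An application of \cref{lemma:tstructuregr} then yields a unique $t$-structure on $\IndCohL_A=\Ind(\LMod^{\operatorname{sm}}_A)$ compatible with filtered colimits and for which the tautological inclusion is $t$-exact.

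Second, I will establish right completeness. Since each small module is bounded, $\bigcap_n\LMod^{\operatorname{sm}}_{A,\leqslant -n}=0$. By \cref{tstructuregr1}, $\IndCohL_{A,\leqslant -n}$ is generated under filtered colimits from $\LMod^{\operatorname{sm}}_{A,\leqslant -n}$, and the truncation functors on $\IndCohL_A$ commute with filtered colimits; this forces $\bigcap_n\IndCohL_{A,\leqslant -n}=0$ as well. Combined with the compact generation of $\IndCohL_A$, the standard criterion for right completeness of a $t$-structure compatible with filtered colimits applies.

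Third and most delicate, I will use the local smallness hypothesis to identify $\LMod_A$ with the left completion of $\IndCohL_A$. The functor $\Phi_A$ arises by ind-extension of the fully faithful inclusion $\LMod^{\operatorname{sm}}_A\hookrightarrow\LMod_A$; it preserves filtered colimits, and by \cref{tstructuregr2} it is $t$-exact. The heart of $\LMod_A$ is the abelian category of discrete $\pi_0A$-modules; since local smallness of $A$ makes $\pi_0A$ finite-dimensional, hence Noetherian, finitely generated discrete $\pi_0A$-modules are finite-dimensional and thus small in the sense of \cref{smallmodules}. Every discrete $\pi_0A$-module being a filtered colimit of its finitely generated submodules, $\Phi_A$ induces an equivalence on hearts $\Ind\lp\LMod^{\operatorname{sm},\heartsuit}_A\rp\simeq\LMod^{\heartsuit}_A$. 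An induction on cohomological amplitude $b-a$, using the fibre sequences $\tau_{\geqslant b}M\to M\to\tau_{\leqslant b-1}M$ together with the $t$-exactness of $\Phi_A$, upgrades this to equivalences $\IndCohL_{A,[a,b]}\simeq\LMod_{A,[a,b]}$ for every $a\leqslant b$. Since $A$ is connective, $\LMod_A$ is itself left complete for its natural $t$-structure, so taking the inverse limit over $b\to\infty$ identifies $\LMod_A\simeq\lim_n\LMod_{A,\leqslant n}$ with the left completion $\smash{\widehat{\IndCohL_A}}=\lim_n\IndCohL_{A,\leqslant n}$ via $\Phi_A$. The main obstacle is this third step: verifying that local smallness is precisely the right finiteness condition to ensure that small modules exhaust the entire discrete module category under filtered colimits, and to propagate this identification through all cohomological amplitudes.
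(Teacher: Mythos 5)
Your first two steps track the paper's argument closely: the $t$-structure on $\LMod^{\operatorname{sm}}_A$ is obtained by restriction (the paper deduces this from the corresponding fact for $\Perf_{\Bbbk}$ via the conservative, (co)limit-preserving forgetful functor, while you argue via subquotients of homotopy groups — both are fine), the induced $t$-structure on $\IndCohL_A$ comes from \cref{lemma:tstructuregr}, and right completeness follows from the vanishing of $\bigcap_n\lp\IndCohL_A\rp_{\leqslant -n}$ together with compatibility with filtered colimits. The $t$-exactness of $\Phi_A$ via \cref{tstructuregr2} also matches the paper.

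The third step, however, has a genuine gap. You propose to establish an equivalence on hearts and then run a d\'evissage on cohomological amplitude using the fiber sequences $\tau_{\geqslant b}M\to M\to\tau_{\leqslant b-1}M$ and the $t$-exactness of $\Phi_A$. This induction does not close: when you peel off the top truncation of $X$, the target $Y$ still has full amplitude, so you must also induct on the amplitude of $Y$, and the base case of the double induction is the assertion that $\Phi_A$ induces isomorphisms on \emph{all} groups $\Ext^n$ between heart objects, for every $n\geqslant 0$. An equivalence of hearts together with $t$-exactness gives this only for $n=0$ (and, by the interpretation of $\Ext^1$ via extensions in the heart, for $n=1$); for $n\geqslant 2$ it can fail — this is the same phenomenon as the realization functor $\mathrm{D}^b(\mathcal{A})\to\scrD$ out of the derived category of a heart failing to be fully faithful. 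Concretely, the missing input is full faithfulness of $\Phi_A$ on coconnective objects, which amounts to the colimit-interchange statement that for $M$ small and $(N_i)$ a filtered system of small coconnective modules, the map $\colim_i\Map_{\LMod_A}(M,N_i)\to\Map_{\LMod_A}(M,\colim_iN_i)$ is an equivalence; this is nontrivial precisely because small modules are not compact in $\LMod_A$. The paper's proof devotes its main effort to exactly this point: it writes $M$ as a retract ($\mathrm{Idem}^+$-indexed colimit, hence also a limit) of sums of shifts of $\Bbbk$, compares with the free modules $A^{\oplus r_j}[n_j]$ via a map whose fiber is $1$-connective (so that mapping into coconnective objects kills the difference), and then commutes the resulting limit past the filtered colimit. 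Your argument needs this (or an equivalent substitute) before the amplitude induction, and the passage from bounded parts to the full coconnective parts $\lp\IndCohL_A\rp_{\leqslant n}\simeq\lp\LMod_A\rp_{\leqslant n}$ also requires an essential-surjectivity statement for arbitrary (not just discrete or bounded) coconnective modules, which the paper obtains by truncating presentations of perfect modules using that $A$ is connective and locally small.
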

\begin{warning}   
If one assumes $A$ to be \textit{small} rather than only locally small, \cref{prop:tstructureindcoh} boils down to \cite[Proposition $3.4.18$]{dagx}. One could be confused by the fact that there the $t$-structure on $ \IndCohL_A$ fails to be \textit{right} complete, but this is easily explained: in \cite{dagx} the functor $\Phi_A$ is replaced by another functor $\Psi_A$, which is more compatible with base change. The functor $\Psi_A$ is closely related to $\Phi_A$ but involves $A$-linear duality as well; in particular, it swaps connective and coconnective objects, and this explains why the $t$-structure on $\IndCoh^{\operatorname{L}}_{A}$ described in \cite[Definition $3.4.16$ and Remark $3.4.17$]{dagx} is left but not right complete. Rather, our definition of the $t$-structure in \cref{prop:tstructureindcoh} resembles the $t$-structure on ind-coherent sheaves over Noetherian schemes defined in \cite[Proposition $1.2.2$]{studyindag1}.
\end{warning}
\begin{proof}[Proof of \cref{prop:tstructureindcoh}]
When $A$ is commutative and such that $\Coh(A)\simeq\Mod_A^{\mathrm{sm}}$, this was essentially proved in \cite[Proposition 1.2.4]{indcoh}. However, that argument works also in our generality: we still spell out the details for the reader's convenience. For any connective associative $\Bbbk$-algebra the restriction of the ordinary $t$-structure on the category of left $A$-modules yields a $t$-structure on $\LMod^{\operatorname{sm}}_A$: this boils down to the fact that this is true for $\Perf_{\Bbbk}$, and that forgetting the $A$-module structure is a conservative operation which preserves all limits and colimits. Thus, the existence of the $t$-structure on $\IndCohL_A$ follows from \cref{lemma:tstructuregr}.

We can easily see that such $t$-structure is right complete as follows. Since $\IndCohL_A$ is stable and admits uncountable coproducts, and coconnective objects are stable under uncountable coproducts (because this is true in $\LMod_A$), we can use the (dual  of the) criterion \cite[Proposition $1.2.1.19$]{dagx} for the right completeness of $t$-structures on stable categories. Indeed, we have that$$\lp\IndCoh^{\operatorname{L}}_{A}\rp_{\leqslant-\scriptstyle\infty}\coloneqq\bigcap_{n\geqslant0}\lp\IndCoh^{\operatorname{L}}_{A}\rp_{\leqslant-n}\simeq\bigcap_{n\geqslant0}\lp\LMod_{A}\rp_{\leqslant-n}\simeq 0.$$

Moreover, the functor $\Phi_A\colon\IndCohL_A\to\LMod_A$ is $t$-exact: this is an obvious consequence of \cref{tstructuregr2} because the inclusion $\LMod^{\operatorname{sm}}_A\subseteq \LMod_A$ is $t$-exact.

To prove the claim about the left completion, we simply need to check that for any integer $n$ the functor $\Phi_A$ induces an equivalence of categories
\begin{equation}
\label{equiv:coconnective}
\lp\IndCohL_A\rp_{\leqslant n}\overset{\simeq}{\longrightarrow}\lp\LMod_A\rp_{\leqslant n}.
\end{equation}
Indeed, the equivalence \eqref{equiv:coconnective} would yield an equivalence between the categories of eventually coconnective objects
\[
\mathrm{IndCoh}_A^{\mathrm{L},+}\coloneqq\bigcup_{n\in\ZZ}\lp\IndCohL_A\rp_{\leqslant n}\simeq\bigcup_{n\in\ZZ}\lp\LMod_A\rp_{\leqslant n}\eqqcolon \LMod_A^+.
\]Restriction to eventually coconnective objects does not alter the left completion of a $t$-structure (\cite[Remark $1.2.1.18$]{ha}), so this implies that the left completion of $\IndCohL_A$ and $\LMod_A$ are equivalent. But the canonical $t$-structure on $\LMod_A$ is left complete (\cite[Proposition $7.1.1.13$]{ha}), so we conclude that it has to be the left completion of the $t$-structure on $\IndCohL_A$ as well.

Since the functor $\Phi_A$ is exact, we can reduce ourselves to consider the case $n=0$. We first prove that any perfect and coconnective left $A$-module $M$ is obtained as a colimit of small coconnective left $A$-modules. Write any such $M$ as a colimit$$M\simeq \colim_{i\in I}A^{\oplus r_i}[n_i]$$over some diagram $I$. Notice that, even if $M$ is perfect, the diagram cannot be assumed to be finite because $M$ could be obtained from $A$ via shifts, finite direct sums or \textit{retracts}, and the latter are only realized as \textit{countably infinite} colimits (\cite[Section $4.4.5$]{htt}). Since $M$ is coconnective, we have$$M\simeq\tau_{\leqslant 0}M\simeq\tau_{\leqslant 0}\lp\colim_{i\in I}A^{\oplus r_i}[n_i]\rp\simeq\colim_{i\in I}\tau_{\leqslant 0}\lp A^{\oplus r_i}[n_i]\rp,$$where in the last equivalence we used the fact the truncation functor $\tau_{\leqslant 0}$ is a left adjoint. Since $A$ is locally small and connective, each $\tau_{\leqslant0}\lp A^{\oplus r_i}[n_i]\rp$ is a small $A$-module. Moreover, by the very same definition of the $t$-structure on $\LMod^{\operatorname{sm}}_A$, we conclude that $\tau_{\leqslant0}\lp A^{\oplus r_i}[n_i]\rp$ is coconnective inside $\LMod^{\operatorname{sm}}_A$. 
Next, we prove that the functor $\Phi_A$ is fully faithful when restricted to $\lp\IndCohL_A\rp_{\leqslant0}$. We will actually prove that for \textit{any} small left $A$-module $M$ (seen trivially as a left ind-coherent $A$-module) and for any coconnective left ind-coherent $A$-module $N$ the map of spaces
\[
\Map_{\IndCohL_A}{\lp M,\hsp N\rp}\longrightarrow\Map_{\LMod_A}{\lp\Phi_A(M),\hsp\Phi_A(N)\rp}
\]
is an equivalence. Writing $N$ as a filtered colimit $\colim_i N_i$, with each $N_i$ small and coconnective, we have that
\begin{align*}
\Map_{\IndCohL_A}{\lp M,\hsp N\rp}&\simeq  \Map_{\IndCohL_A}{\lp M,\hsp \colim_i N_i\rp}\\&\simeq\colim_i\Map_{\IndCohL_A}{\lp M,\hsp N_i\rp}\simeq\colim_i\Map_{\LMod^{\mathrm{sm}}_A}{\lp M,\hsp N_i\rp},
\end{align*}
because each small $A$-module is compact in $\IndCohL_A$. The functor $\Phi_A$ sends $M$ and $N$ to their actual colimits in $\LMod_A$, so $\Phi_A$ is fully faithful on coconnective objects if
\[
\colim_i\Map_{\LMod_A}{\lp M,\hsp N_i\rp}\longrightarrow\Map_{\LMod_A}{\lp M,\hsp \colim_i N_i\rp}
\]
is an equivalence. As we already observed, $\LMod_A^{\mathrm{sm}}$ is the thick stable sub-category of $\LMod_A$ spanned by $\Bbbk$: so it sufficient to write $M$ as a retract of shifts and direct sums of $\Bbbk$
\[
M\simeq\colim_{j\in J} \Bbbk^{\oplus r_j}[n_j],
\]
and observe that the augmentation $A\to\Bbbk$ produces a map
\[
f\colon\colim_j A^{\oplus r_j}[n_j]\longrightarrow M
\]
whose fiber is at least $1$-connective. In particular, recalling that each $N_i$ is coconnective, we obtain
\begin{align*}
\colim_i\Map_{\LMod_A}{\lp M,\hsp N_i\rp}&\simeq\colim_i\fib{\lp \Map_{\LMod_A}{\lp \colim_j A^{\oplus r_j}[n_j],\hsp N_i\rp}\longrightarrow\Map_{\LMod_A}(\fib(f),\hsp N_i)\rp}\\&\simeq \colim_i \Map_{\LMod_A}{\lp\colim_j A^{\oplus r_j}[n_j],\hsp N_i\rp}\\
&\simeq \colim_i\lim_j\Map_{\LMod_A}{\lp A^{\oplus r_j}[n_j],\hsp N_i\rp}.
\end{align*}
Using the fact that the diagram $J$ is a limit over the category $\mathrm{Idem}^+$, and such limits are universal because they are colimits as well, we obtain that
\begin{align*}
 \colim_i\lim_j\Map_{\LMod_A}{\lp A^{\oplus r_j}[n_j],\hsp N_i\rp}&\simeq\lim_j\colim_i\Map_{\LMod_A}{\lp A^{\oplus r_j}[n_j],\hsp N_i\rp}\\&\simeq \lim_j\Map_{\LMod_A}{\lp A^{\oplus r_j}[n_j],\hsp N\rp}
 \simeq \Map_{\LMod_A}{\lp M,\hsp N\rp},
\end{align*}
and this concludes the proof.
\end{proof}
\begin{remark}
The fact that $\LMod_A$ is the left completion of $\IndCohL_A$ implies that such $t$-structure is left complete if and only if $\IndCohL_A$ is equivalent to $\LMod_A$. This cannot be true if small and compact objects are not the same. For this to be true, one needs some quite strong assumptions on $A$: indeed, $A$ must be small itself (since $A$ is always perfect), hence discrete (otherwise, the small left $A$-module $\pi_0A$ cannot admit a finite resolution of semi-free $A$-modules). Moreover, it must satisfy some regularity conditions which assure that every quotient $A/I$ has finite global homological dimension over $A$. In particular, if $A$ is a $\Ebb_2$-$\Bbbk$-algebra, then it must be commutative and it must be a finite product of copies of $\Bbbk$.
\end{remark}
The previous discussion will allow us to reformulate the correspondence between modules over Koszul dual algebras of \cref{parag:koszul} in terms of algebraic geometry, at least in the topological setting. The key ingredient is the following result, which is more or less already known in the literature (see \cite{dagviii,studyindag1}).
\begin{proposition}
\label{prop:qcoh(cspec)}
Let $A$ be a $2$-coconnective and locally small commutative $\Bbbk$-algebra, which as a mere   associative algebra admits a $\Ebb_1$-Koszul dual $A^!$. Then we have an equivalence of categories$$\LMod_{A^!}\simeq \QCoh(\cSpec(A)).$$
\end{proposition}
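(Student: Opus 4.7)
The plan is to realize both $\LMod_{A^!}$ and $\QCoh(\cSpec(A))$ as the left completion of a common $t$-structure on $\Mod_A$. Indeed, \cref{prop:luriecoaffine}(3) identifies $\QCoh(\cSpec(A))$ with the left completion of the non-standard $t$-structure on $\Mod_A$ described in \cref{prop:luriecoaffine}(1), while \cref{prop:tstructureindcoh} identifies $\LMod_{A^!}$ with the left completion of the canonical $t$-structure on ind-coherent $A^!$-modules. It therefore suffices to transport the latter $t$-structure onto $\Mod_A$ via a Koszul duality equivalence, and to match it with the former.

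First, I would verify that $A^!$ is connective and locally small, so that the relevant inputs apply. Connectivity follows from $\pi_iA^!\cong\Ext^{-i}_A(\Bbbk,\Bbbk)$, which vanishes for $i<0$ thanks to the coconnectivity of $A$ together with the assumption $\pi_0A\cong\Bbbk$; local smallness of $A^!$ is a standing hypothesis. I would then apply \cref{parag:koszul} with the roles of $A$ and $A^!$ interchanged, and invoke Koszul bi-duality $A^{!!}\simeq A$ (valid under the locally small assumptions) together with commutativity of $A$ to convert right into left modules, obtaining an equivalence
\[
\IndCohL_{A^!}\simeq\Mod_A.
\]
\cref{prop:tstructureindcoh}, applied to $A^!$, then yields that $\LMod_{A^!}$ is the left completion of the canonical $t$-structure on $\IndCohL_{A^!}$, which via the above equivalence transports to a $t$-structure on $\Mod_A$.

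The hard part will be comparing this transported $t$-structure with the one of \cref{prop:luriecoaffine}(1). On one side, the transported $t$-structure is generated on the connective part by the image of $\LMod^{\mathrm{sm}}_{A^!,\geqslant 0}$; on the other, the target $t$-structure is characterized by $\oblv_A\colon\Mod_A\to\Mod_{\Bbbk}$ detecting coconnective objects and by base change detecting connective ones. I plan to check the identification on compact generators: the augmentation module $\Bbbk$, which is connective and small in $\LMod_{A^!}$, corresponds under Koszul duality for modules to the free module $A$ in $\Mod_A$, and $A$ is coconnective in the sense of \cref{prop:luriecoaffine}(1) since $A$ is coconnective as a $\Bbbk$-algebra. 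Combined with the description of $\LMod^{\mathrm{sm}}_{A^!}$ as the thick subcategory of $\LMod_{A^!}$ generated by $\Bbbk$ (used in the proof of \cref{parag:koszul}) and a careful bookkeeping of shifts and base changes along the augmentation $\Spec(\Bbbk)\to\cSpec(A)$, this should yield agreement of the two $t$-structures on $\Mod_A$. Their left completions then coincide, giving the desired equivalence $\LMod_{A^!}\simeq\QCoh(\cSpec(A))$.
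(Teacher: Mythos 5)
Your proposal is correct and follows essentially the same route as the paper: both sides are realized as left completions, via the Koszul duality equivalence $\IndCohL_{A^!}\simeq\Mod_A$ obtained by applying \cref{parag:koszul} to $A^!$ together with biduality, and the whole content is the $t$-exactness of that equivalence. Two small points. First, local smallness of $A^!$ is \emph{not} a standing hypothesis of the proposition — only $A$ is assumed locally small, and one must check that $A^!\simeq\Mapin_A(\Bbbk,\hsp\Bbbk)$ inherits this (the paper asserts exactly this before invoking \cref{parag:koszul}). Second, for your check on the generator to close you need the image of $\Bbbk$, namely $A$, to lie in the \emph{heart} of the $t$-structure of \cref{prop:luriecoaffine} on $\Mod_A$, i.e.\ to be connective there and not only coconnective; this holds since $A\otimes_AR\simeq R$ is connective for every connective $A$-algebra $R$. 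The paper verifies $t$-exactness slightly differently — directly on arbitrary small connective and coconnective $A^!$-modules, using the two detection criteria of \cref{prop:luriecoaffine} and the fact that the inverse of the restricted duality functor is $-\otimes_A\Bbbk$ — whereas your reduction to the single object $\Bbbk$ works (by \cref{lemma:tstructuregr} and exactness) once one notes that small connective (resp.\ coconnective) $A^!$-modules are finite iterated extensions of non-negative (resp.\ non-positive) shifts of the augmentation module, because $\pi_0A^!\cong\Bbbk$; that is the ``bookkeeping'' you allude to, and it should be spelled out.
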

\begin{proof}
The augmented associative algebra $A^!$ is connective (\cite[Theorem $3.1.14$]{dagx}). If $A$ is locally small and simply connected, one can consider a semi-free resolution of $\Bbbk$ as an $A$-module and using the spectral sequence of \cite[Variant 7.2.1.24]{ha} one can see that $A^!\simeq\Mapin_A(\Bbbk,\hsp\Bbbk)$ is locally small as well. In particular, \cref{prop:luriecoaffine,prop:tstructureindcoh} provide us with the following characters.
\begin{enumerate}
\item The $t$-structure on $\IndCohL_{A^!}$.
    \item The $t$-structure on $\LMod_{A^!}$, which is the left completion of the one on $\IndCohL_{A^!}.$
    \item The $t$-structure on $\Mod_A$.
    \item The $t$-structure on $\QCoh(\cSpec(A))$, which is the left completion of the one on $\Mod_A$. 
\end{enumerate}
Since $A$ is coconnective and locally small, we are in the setting of \cref{parag:koszul} and we obtain the equivalence \eqref{equiv:koszuldualitymodules}. If such equivalence is $t$-exact then we can deduce our statement from the universal property of the left completion. Again, \cref{lemma:tstructuregr} implies that we just need to check whether the restriction of this equivalence to the full sub-category $\LMod^{\operatorname{sm}}_{A^!}$ is $t$-exact.
\begin{enumerate}
    \item First, notice that the duality functor $\IndCoh^{\operatorname{L}}_{A^!}\to\Mod_A$ preserves coconnective objects. Indeed, let $M^!$ be a coconnective small left $A^!$-module. Its image inside $\Mod_A$ is the module$$M\coloneqq\Mapin_{A^!}\lp\Bbbk,\hsp M^!\rp,$$and this mapping $\Bbbk$-module is immediately seen to be coconnective since it is a mapping spectrum from a connective object to a coconnective one.
    \item The duality functor also preserves connective objects. We can see it as follows: notice that, inside $\Mod_A$, a module $M$ is connective precisely if for any (or, equivalently, \textit{one}) map of $\Bbbk$-algebras $A\to R$ where $R$ is connective, the $R$-module $R\otimes_AM$ is connective (\cite[Proposition $4.5.4$]{dagviii}). So we can test whether for a connective small left $A^!$-module $M^!$ the $\Bbbk$-module$$M\otimes_A\Bbbk\coloneqq\Mapin_{A^!}{\lp\Bbbk,\hsp M^!\rp}\otimes_A\Bbbk$$is connective. But this is just the underlying $\Bbbk$-module of the $A^!$-module $M^!$, since the inverse to$$\restr{\Mapin_{A^!}{\lp\Bbbk,\hsp-\rp}}{\LMod^{\operatorname{sm}}_{A^!}}\colon\LMod^{\operatorname{sm}}_{A^!}\longrightarrow\Perf_A$$is realized precisely by its left adjoint $(-)\otimes_A\Bbbk$. So our claim follows from the fact that $M^!$ was assumed to be connective in the first place.
    \end{enumerate}
\end{proof}
\subsection{\texorpdfstring{$\Ebb_n$}{En}-Koszul duality for higher modules in algebraic topology}
\label{sec:mainthmkoszul}


We shall now apply the results and constructions of \cref{sec:koszul_coaffine} to the topological setting. We first set the following definition.
\begin{defn}
\label{def:nkoszul}
Let $n\geqslant0$ be an integer. A space $X$ is \emph{$n$-Koszul} if the following conditions hold.
\begin{defenum}
\item\label{nconnectedness} The space $X$ is $n$-connected.
\item \label{nilpotent}The space $X$ is \emph{nilpotent}: for any choice of a base point $x$ in $X$, the fundamental group $\pi_1(X,x)$ is a nilpotent group which acts nilpotently on every higher homotopy group $\pi_k(X)$ for $k\geqslant2$.
\item\label{finitetype}The space $X$ is \emph{rationally of finite type}: let $X_{\QQ}$ be the rationalization of $X$ (which is well defined, because of \cref{nconnectedness} and \cref{nilpotent}). Then for any $k\geqslant n+1$ the rational vector space $\pi_k(X_{\QQ})\cong\pi_k(X)\otimes_{\ZZ}\QQ$ is finitely generated.
\end{defenum}
\end{defn}
We collect some important features of \cref{def:nkoszul} in the following proposition, for later reference.
\begin{proposition}
\label{remark:nkoszul}
Let $n\geqslant0$ be an integer.
\begin{propenum}
\item\label{remark:nkoszulunderproducts} Let $\left\{X_{\alpha}\right\}_{\alpha\in\mathrm{A}}$ be a collection of $n$-Koszul spaces, and suppose that for any integer $k\geqslant0$ the set of indices $\left\{\alpha\in\mathrm{A}\mid \pi_k(X_{\alpha})\not\cong0\right\}$ is finite. Then the product $\smash{\prod_{\alpha}X_{\alpha}}$ is again $n$-Koszul. 
\item\label{prop:nkoszuleasy}If $X$ is an $n$-Koszul space, then it is also $k$-Koszul for all $0\leqslant k\leqslant n$.
\item\label{prop:nkoszultruncated}If $X$ is an $n$-Koszul space, then for any integer $k\geqslant0$ the Postnikov truncation $\pi_{\leqslant k}X$ is again $n$-Koszul.
\item\label{prop:nkoszuldeloops}Suppose $n\geqslant1$, and let $X$ be a pointed $n$-Koszul space. Then for any integer $0\leqslant k \leqslant n$ the iterated based loop space $\Omega^{k}_*X$ is $(n-k)$-Koszul.
\item\label{remark:cochainsarelocallysmall}Suppose $n\geqslant1$. Let $\Bbbk$ be a field of characteristic zero, and let $X$ be an $n$-Koszul space. Then the cochain $\Bbbk$-algebra $\CX$ is locally small, in the sense of \cref{locallysmall}. 
\end{propenum}
\end{proposition}
\begin{proof}
\cref{remark:nkoszulunderproducts} is a consequence of the fact that taking homotopy groups commutes with arbitrary products of spaces, while \cref{prop:nkoszuleasy} and \cref{prop:nkoszultruncated} are obvious.

Let us prove \cref{prop:nkoszuldeloops}. If $X$ is $n$-connected then $\Omega^k_*X$ is $(n-k)$-connected. Since we have clear isomorphisms
$$\pi_{\bullet}((\Omega^k_*X)_{\QQ},\delta_x)\cong\pi_{\bullet}(\Omega^k_*X,\delta_x)\otimes_{\ZZ}\QQ\cong\pi_{\bullet-k}(X,x)\otimes_{\ZZ}\QQ\cong\pi_{\bullet-k}(X_{\QQ}),$$it follows that $\Omega^k_*X$ is rationally of finite type if and only if $X$ is rationally of finite type. Finally, $\Omega^k_*X$ is a connected based loop space, so it is in particular a connected H-space and hence nilpotent (\cite[Pag. 49]{May_Ponto}).

Finally, we prove \cref{remark:cochainsarelocallysmall}. If $X$ is an $n$-connected space, with $n\geqslant1$, then the rationalization map $X\to X_{\QQ}$ induces isomorphisms
\[
\mathrm{H}^k(X;\QQ)\overset{\cong}{\longrightarrow}\mathrm{H}^k(X_{\QQ};\ZZ)
\]
for all integers $k\geqslant0$, and thus we have that
\begin{align*}
\pi_k\mathrm{C}^{\bullet}(X;\QQ)\cong\mathrm{H}^k(X;\QQ)\cong\mathrm{H}^k(X_{\QQ},\QQ).
\end{align*}
Since $X_{\QQ}$ is a rational space of finite type, each rational vector space $\mathrm{H}^k(X_{\QQ};\QQ)\cong\pi_k\mathrm{C}^{\bullet}(X;\QQ)$ is finitely generated (see for example \cite[Theorem 1.3.1]{dagxiii}). In particular, $\mathrm{C}^{\bullet}(X;\QQ)$ is a locally small $\QQ$-algebra. Faithfully flatness of field extensions and the K\"unneth formula imply that for any field $\Bbbk$ of characteristic zero and for any $n$-Koszul space $X$, with $n\geqslant1$, the commutative $\Bbbk$-algebra $\CX$ is again locally small over $\Bbbk$.
\end{proof}
\begin{remark}\label{remark:K(Q,1)locallyfinite} 
If $X$ is a $0$-Koszul space such that its rationalization $X_{\QQ}$ is a rational Eilenberg--MacLane space $K(\pi,1)$ for some finitely generated $\QQ$-vector space $\pi$, then it is still true that $\CX$ is locally finite over $\Bbbk$ for any field of characteristic zero. When $\Bbbk=\QQ$, this is a consequence of \cite[Corollary 1.1.16]{dagxiii}; the general case follows once again from the universal coefficient theorem and from the fully faithfulness of field extensions.
\end{remark}
\begin{parag}
As a consequence of \cref{remark:cochainsarelocallysmall}, for any $n\geqslant1$ we have that $\Ebb_n$-Koszul duality applies to any (pointed) $n$-Koszul space over a field $\Bbbk$ of characteristic zero, in the sense that we have equivalences
\begin{equation}
\label{equation:n-koszulduality}
\mathrm{C}_{\bullet}(\Omega^n_*X;\Bbbk)^{!_{\Ebb_n}}\simeq\CX\quad\text{and}\quad\CX^{!_{\Ebb_n}}\simeq \mathrm{C}_{\bullet}(\Omega^n_*X;\Bbbk).
\end{equation}
By the mod $\mathcal{C}$ Hurewicz theorem, it follows that if $\mathrm{C}^{\bullet}(X;\QQ)$ is locally small over $\QQ$ then $X$ is also rationally of finite type, so $X$ is $n$-Koszul. In particular, $n$-Koszul spaces are \textit{precisely} those for which equivalences \eqref{equation:n-koszulduality} hold over fields of characteristic zero. So, \cref{prop:qcoh(cspec)} and \cref{conj:infinityn} immediately imply the following.     
\end{parag}
\begin{corollary}
\label{cor:koszulalgebraic}
Let $X$ be a pointed $1$-Koszul space, and let $\Bbbk$ be a field of characteristic zero. Then we have equivalences of presentably $\Bbbk$-linear categories$$\LocSys(X;\Bbbk)\simeq \LMod_{\operatorname{C}_{\bullet}(\Omega_*X;\Bbbk)}\simeq\Qcoh(\cSpec(\CX)).$$
\end{corollary}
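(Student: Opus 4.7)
The plan is to obtain \cref{cor:koszulalgebraic} as a direct application of \cref{prop:qcoh(cspec)} to the algebra $A \coloneqq \CX$. Concretely, I would set $A^! \coloneqq \operatorname{C}_{\bullet}(\Omega_*X;\Bbbk)$ and show that the hypotheses of \cref{prop:qcoh(cspec)} are satisfied, at which point the equivalence $\LMod_{A^!}\simeq\QCoh(\cSpec(A))$ immediately yields the desired statement.

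The first step is to check the algebraic properties of $\CX$. Commutativity is automatic from its natural $\Einf$-algebra structure, so as an associative $\Bbbk$-algebra it is in particular commutative. Coconnectivity follows from the fact that singular cochains with coefficients in $\Bbbk$ are concentrated in non-positive homological degrees together with the connectedness of $X$ (which is ensured by \cref{def:nkoszul}, since $1$-Koszul forces $0$-connected): indeed, $\pi_0(\CX) \cong \mathrm{H}^0(X;\Bbbk) \cong \Bbbk$. The local smallness in the sense of \cref{locallysmall} is precisely the cohomological finiteness of $\CX$ over $\Bbbk$ required by \cref{finitetype} in the definition of $1$-Koszul space.

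The second step is to identify the $\Ebb_1$-Koszul dual of the underlying associative algebra of $\CX$ with $\operatorname{C}_{\bullet}(\Omega_*X;\Bbbk)$. As noted in the discussion immediately preceding the statement of the corollary, for any pointed connected space $X$ one has $\CX \simeq \operatorname{C}_{\bullet}(\Omega_*X;\Bbbk)^{!_{\Ebb_1}}$; the reciprocal identification
\[
\operatorname{C}_{\bullet}(\Omega_*X;\Bbbk) \simeq \CX^{!_{\Ebb_1}}
\]
is precisely guaranteed by the $1$-Koszul hypothesis on $X$, which enforces the finiteness and nilpotency conditions \ref{finitetype}--\ref{nilpotent} making the biduality hold (as recalled in the paragraph preceding the corollary and in \cite[Proposition 5.3]{ayala-francis}).

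With all hypotheses verified, \cref{prop:qcoh(cspec)} applies to $A = \CX$ and delivers the equivalence
\[
\LMod_{\operatorname{C}_{\bullet}(\Omega_*X;\Bbbk)} \simeq \LMod_{A^!} \simeq \QCoh(\cSpec(\CX)),
\]
which is the claim. I do not anticipate any genuine obstacle here: the substantive content lies in \cref{prop:qcoh(cspec)} (which in turn rests on the comparison of $t$-structures in \cref{prop:luriecoaffine} and \cref{prop:tstructureindcoh}), and the present corollary is the ``topological dictionary'' that feeds $\CX$ into that general result. If anything, the only subtlety worth flagging is making sure that the $1$-Koszul assumption indeed implies the reciprocal Koszul duality statement used above, which is why the nilpotency and finiteness of $\pi_1(X)$ are imposed in \cref{def:nkoszul} rather than just connectedness.
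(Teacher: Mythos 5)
Your proposal is correct and coincides with the paper's own argument: the corollary is deduced by feeding $A=\CX$ (coconnective since $X$ is connected, locally small by condition \ref{finitetype}, with $\Ebb_1$-Koszul dual $\operatorname{C}_{\bullet}(\Omega_*X;\Bbbk)$ by the $1$-Koszul hypothesis) into \cref{prop:qcoh(cspec)}. The verification of the hypotheses is exactly as you describe, so there is nothing to add.
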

\begin{remark}
\label{remark:aff0affineness}If $X$ satisfies the assumptions of \cref{cor:koszulalgebraic}, then such equivalence arises geometrically as follows. Thanks to the equivalence \eqref{lemma:koszulduality}, the category $\LMod_{\operatorname{C}_{\bullet}(\Omega_*X;\Bbbk)}$ can be equivalently described as $\LocSys(X;\Bbbk)$, which is the category of quasi-coherent sheaves over the Betti stack $X_{\operatorname{B}}$ (as already observed in Paragraph \cref{parag:qcohbetti}). Since $\Gamma(X_{\operatorname{B}},\scrO_{X_{\operatorname{B}}})\simeq\CX$, the identity map of $\CX$ induces an affinization map
\[
\mathrm{aff}_X\colon X_{\operatorname{B}}\longrightarrow\cSpec(\CX).
\]
The equivalence of \cref{cor:koszulalgebraic} is then realized by pulling back and pushing forward along $\mathrm{aff}_X$. Indeed, the pullback functor $\mathrm{aff}_X^\ast$ is a functor between stable categories equipped with both left and right complete $t$-structures which is strongly monoidal and right $t$-exact (i.e.,  it preserves connective objects). Therefore, using \cite[Corollary $4.6.18$]{dagx}, we can deduce that it is uniquely determined by the symmetric monoidal and right $t$-exact functor
\[
\smash{\widetilde{\mathrm{aff}}}_X^\ast\colon\Mod_{\CX}\longrightarrow\LMod_{\operatorname{C}_{\bullet}(\Omega_*X;\Bbbk)}
\]
which is obtained by pre-composing $\mathrm{aff}_X^\ast$ with the natural left completion functor 
$$
\Mod_{\CX}\longrightarrow\QCoh(\cSpec(\CX)).$$

So, it will suffice to understand the behaviour of $\smash{\widetilde{\mathrm
{aff}}}^{\ast}_X$. Let $\eta\colon\left\{*\right\}\to X$ be the chosen base point in $X$, and let $\eta_{\mathrm{B}}\colon\Spec(\Bbbk)\to X_{\mathrm{B}}$ be its image under the Betti stack functor. Pullback along $\eta_{\mathrm{B}}$ yields a forgetful functor $\LMod_{\operatorname{C}_{\bullet}(\Omega_*X;\Bbbk)}\to\Mod_{\Bbbk}$, and using again \cite[Corollary $4.6.18$]{dagx} we can see that the symmetric monoidal and right $t$-exact functor
\[
\eta_{\mathrm{B}}^\ast\circ\mathrm{aff}_X^\ast\colon\QCoh(\cSpec(\CX))\longrightarrow \Mod_{\Bbbk}
\]
uniquely corresponds to the natural base change functor $\Mod_{\CX}\to\Mod_{\Bbbk}$ along the augmentation $\CX\to \Bbbk$, induced at the level of $\Bbbk$-cochains by $\eta$. In particular, for any $\CX$-module $M$ the underlying $\operatorname{C}_{\bullet}(\Omega_*X;\Bbbk)$-module of $\smash{\widetilde{\mathrm{aff}}}^\ast_X(M)$ is equivalent to $M\otimes_{\CX}\Bbbk$. This is just the left adjoint of the Koszul duality functor which induces the equivalence of \cref{prop:qcoh(cspec)}.
\end{remark}
\begin{remark}
\label{remark:qcoh(cspec)_good}
Let $X$ be a pointed $1$-Koszul space. Then the equivalence of \cref{cor:koszulalgebraic} implies some important properties of the stable category $\QCoh(\cSpec(\CX))$.
\begin{enumerate}
    \item The category $\QCoh(\cSpec(\CX))$ is always compactly generated, and hence self-dual, as a presentably $\Bbbk$-linear category: indeed, this follows from the fact that $\LocSys( X;\Bbbk)\simeq\LMod_{\mathrm{C}_{\bullet}(\Omega_*X;\Bbbk)}$ has this property. 
    \item Suppose $X$ is a $\Ebb_1$-monoid in spaces; then, \cref{lemma:cspecmonoidal} implies that $\cSpec(\CX)$ is a $\Ebb_1$-monoid in stacks. Thus, both $\LocSys( X;\Bbbk)$ and $\QCoh(\cSpec(\CX))$ come equipped with a convolution monoidal structure induced by the monoid structure of $X$. Since $\aff_X$ is a map of groups, \cref{remark:aff0affineness} implies that the equivalence of \cref{cor:koszulalgebraic} is monoidal. On the other hand, \cite[Proposition 2.10]{Pascaleff_Pavia_Sibilla_Local_Systems} shows that the convolution monoidal structure on $\LocSys(X;\Bbbk)$ corresponds to the relative tensor product monoidal structure of $\LMod_{\mathrm{C}_{\bullet}(\Omega_*X;\Bbbk)}$; the latter is obviously a \textit{rigid} monoidal category. Therefore, under the convolution tensor product, $\QCoh(\cSpec(\CX))$ is a rigid monoidal category as well.
\end{enumerate}
\end{remark}
\cref{cor:koszulalgebraic} allows us to revisit the classical Koszul duality for modules \eqref{equiv:koszuldualitymodules} from a different, though ultimately equivalent, angle.   This point of view has a considerable advantage. Namely, while the concept of $n$-categorical ind-coherent modules is somewhat mysterious and it is far from clear how to define it directly, quasi-coherent sheaves on coaffine stacks can be  categorified in a natural way: that is, we can consider quasi-coherent sheaves of $n$-categories over $\cSpec(\CX)$ (\cref{def:nqcoh}). Hence, $\Ebb_n$-Koszul duality for categorified modules over $\Ebb_n$-Koszul dual algebras in the topological setting can be stated as follows. 
\begin{theorem}
\label{thm:mainkoszuln}
Let $n\geqslant1$ be an integer, let $X$ be a pointed $(n+1)$-Koszul space, and let $\Bbbk$ be a field of characteristic zero. Then the natural $(n+1)$-functor
\[
\mathrm{aff}^\ast_X\colon (n+1)\ShvCattwo^{n}(\cSpec(\CX))\longrightarrow (n+1)\LocSysCattwo^{n}(X;\Bbbk)
\]
is an equivalence of $(n+1)$-categories.
\end{theorem}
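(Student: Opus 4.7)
The plan is to prove this by induction on $n$, with the base case $n=0$ provided by \cref{cor:koszulalgebraic} combined with \cref{lemma:shvcat}. For $n\geq 1$, the strategy is Čech descent along the pointing maps $\eta_B\colon\Spec(\Bbbk)\to X_{\operatorname{B}}$ and $\eta_c\colon\Spec(\Bbbk)\to\cSpec(\CX)$, intertwined by the affinization map $\mathrm{aff}_X$.

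Concretely, I would consider the commutative square of $(n+1)$-functors
\[
\begin{tikzcd}
(n+1)\ShvCattwo^n(\cSpec(\CX)) \ar[r, "\mathrm{aff}^*_X"] \ar[d, "\eta^*_c"'] & (n+1)\LocSysCattwo^n(X;\Bbbk) \ar[d, "\eta^*_B"] \\
(n+1)\LinkPrLUtwo[n] \ar[r, equal] & (n+1)\LinkPrLUtwo[n]
\end{tikzcd}
\]
obtained from the naturality of pullback, and show that both $\eta^*_B$ and $\eta^*_c$ are monadic morphisms in $(n+2)\LinkPrLUtwo[n+1]$. For $\eta^*_B$, this is a direct consequence of the categorified monodromy equivalence \cref{conj:infinityn}, since the $(n+1)$-Koszul hypothesis implies that $X$ is at least $n$-connected. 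For $\eta^*_c$, monadicity can be verified by Barr--Beck--Lurie applied to the Čech bar resolution of $\Spec(\Bbbk)\to\cSpec(\CX)$, using the ambidexterity of pullback and pushforward for $(n+1)\ShvCattwo^n$ when $n\geq 1$ (\cref{remark:shvcatleftrightkan}) together with the reduction of monadicity to the underlying category level (\cref{remark:monadicity}). It then suffices to identify the two monads.

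On the Betti side, the monad acting on $(n+1)\LinkPrLUtwo[n]$ is tensoring with $n\LocSysCattwo^{n-1}(\Omega_*X;\Bbbk)$ endowed with its Day convolution $\Ebb_1$-structure (\cref{lemma:rectificationofmodules}). On the coaffine side, the loop stack $\Omega_*\cSpec(\CX)\simeq\Spec(\mathrm{C}_{\bullet}(\Omega_*X;\Bbbk))$ is, by $\Ebb_1$-Koszul duality, an affine derived scheme, so the corresponding monad is tensoring with $n\mathbf{Mod}^{n-1}_{\mathrm{C}_{\bullet}(\Omega_*X;\Bbbk)}$ with the $\Ebb_1$-structure coming from Pontryagin multiplication. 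To match these, I would observe that $\Omega_*X$ inherits the $n$-Koszul hypotheses of \cref{def:nkoszul} from the $(n+1)$-Koszul hypotheses on $X$: $(n-1)$-connectivity, finiteness of $\pi_n(\Omega_*X)\cong\pi_{n+1}(X)$, nilpotence, and cohomological finite type (the latter from the Serre spectral sequence together with the finite-generation of homotopy groups, cf.\ \cref{remark:nkoszul}). The inductive hypothesis applied to $\Omega_*X$ at categorical level $n-1$, combined with the categorified monodromy equivalence and \cref{lemma:shvcatn}, should then produce a symmetric monoidal equivalence identifying the two monads, after which Barr--Beck--Lurie closes the argument.

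The main obstacle will be upgrading this identification of monads from an equivalence of underlying $n$-categories to an equivalence of $\Ebb_1$-algebras in $(n+1)\LinkPrLUtwo[n]$. On the coaffine side, the $\Ebb_1$-structure comes from the multiplication on the derived loop scheme; on the Betti side, from Day convolution. Intertwining the two via the inductive identification requires a higher-coherence statement expressing the naturality of the affinization map with respect to the bar construction---essentially a categorified form of the fact that Koszul duality intertwines bar and cobar, but now carried out in $(n+1)\LinkPrLUtwo[n]$ rather than in $\Mod_{\Bbbk}$. A subsidiary technical point, bypassed cleanly by working throughout with $\QCoh(\cSpec(\CX))$ rather than with $\Mod_{\CX}$, is the failure of classical Koszul duality to induce a Morita equivalence at the level of raw module categories (cf.\ \cref{warning:wrongkoszul}); the whole strategy of replacing $\CX$ with its cospectrum is precisely designed to circumvent this, as emphasized in the introduction.
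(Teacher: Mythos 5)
Your overall architecture for the inductive step at level $n\geqslant2$ matches the paper's: descend along the pointing, establish (co)monadicity of the two vertical functors in the square, and feed the inductive hypothesis for $\Omega_*X$ into the identification of the resulting descent data (the paper packages this via \cite[Corollary $4.7.3.16$]{ha} applied to the diagram \eqref{fundamentaldiagram}, which also spares you the $\Ebb_1$-coherence worry you raise at the end: one only needs the push--pull comparison $n$-functor $\mathrm{aff}_X^\ast\lp\cSpec(\eta)_\ast(n\bm{\scrC})\rp\to\eta_{\operatorname{B},\ast}(n\bm{\scrC})$ to be an equivalence, and that is exactly where the inductive hypothesis for $\Omega_*X$ enters, rather than a separate monoidal identification of monads).

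The genuine gap is the case $n=1$, which your scheme silently absorbs into the general step via two incorrect claims. First, $\Omega_*\cSpec(\CX)\simeq\Spec(\Bbbk)\times_{\cSpec(\CX)}\Spec(\Bbbk)$ is \emph{not} the affine derived scheme $\Spec(\mathrm{C}_{\bullet}(\Omega_*X;\Bbbk))$: by Eilenberg--Moore it is the \emph{coaffine} stack $\cSpec(\COX)$ (\cref{prop:cospecdeloop}), and $\COX$ is coconnective, hence never connective unless $\Omega_*X$ is contractible (moreover $\mathrm{C}_{\bullet}(\Omega_*X;\Bbbk)$ is merely $\Ebb_1$, so its $\Spec$ is not defined). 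Koszul duality identifies the renormalized module categories, not the stacks. Second, and consequently, the ambidexterity of $f^\ast\dashv f_\ast$ for $\ShvCat^n$ that you invoke ``for $n\geqslant1$'' holds in general only for $n\geqslant2$; for $n=1$ it requires the morphism to be affine schematic (\cref{remark:shvcatleftrightkan}), and it is precisely \emph{not} known a priori that $\cSpec(\eta)\colon\Spec(\Bbbk)\to\cSpec(\CX)$ is affine schematic --- this is why $\cSpec(\eta)^\ast$ can only be shown to be comonadic at level $1$, and why the paper must treat $n=1$ separately (\cref{thm:mainkoszultrue}): it first proves fully faithfulness of $\mathrm{Loc}_{\cSpec(\CX)}$ (\cref{lemma:locfullyfaithful}), exhibits $\cSpec(\CX)$ as the delooping of the group stack $\cSpec(\COX)$, and then runs Gaitsgory's $1$-affineness machinery for group stacks with rigid convolution categories. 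Without this separate argument your induction has no valid base case at the categorical level where the general mechanism first applies.
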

\begin{remark}
Using \cref{notation:when_n_is_zero}, when $n=0$ then \cref{thm:mainkoszuln} reduces to the combination of \cref{cor:koszulalgebraic} with \cref{remark:aff0affineness}.
\end{remark}
\begin{remark}
\label{remark:keyenkoszul}
We stress that \cref{thm:mainkoszuln} does provide a generalization of the usual $\Ebb_1$-Koszul duality equivalence between categories of modules. Let us briefly comment on the two characters appearing in the statement: the $(n+1)$-category $(n+1)\ShvCattwo(\cSpec(\CX))$ provides the natural higher categorification of the concept of the category of quasi-coherent sheaves over $\cSpec(\CX)$. On the other hand, if $X$ is  $(n+1)$-Koszul (in particular, $n$-connected), \cref{conj:infinityn} provides an equivalence
\[
(n+1)\LocSysCattwo^{n}(X;\Bbbk)\simeq(n+1)\mathbf{LMod}^n_{\mathrm{C}_{\bullet}(\Omega^{n+1}_*X;\Bbbk)}.
\]
Thus, for $X$ an $(n+1)$-Koszul space, \cref{thm:mainkoszuln} does relate (the categorification of) quasi-coherent sheaves over the coaffine stack $\cSpec(\CX)$, and (the categorification of) left modules over the $\Ebb_n$-algebra $\mathrm{C}_{\bullet}(\Omega_*X;\Bbbk)$.
\end{remark}
\begin{parag} 
Even if the proof of \cref{thm:mainkoszuln} is essentially carried out via an inductive argument, the strategy for proving the $n=1$ case is strikingly different from the one we use in the $n\geqslant2$ case. Indeed, for $n\geqslant2$ the proof is somewhat formal and depends on the general behaviour of pullbacks and pushforwards of sheaves of $(n+1)$-categories along maps of prestacks (\cref{remark:shvcatleftrightkan}); however, the case where $n=1$ requires some more \textit{ad hoc} arguments.
\end{parag} 
We start with proving the following proposition, that allows us to set an inductive argument in the first place.
\begin{proposition}
 \label{prop:cospecdeloop}
Let $X$ be a pointed $1$-Koszul space and let $\Bbbk$ be a field of characteristic zero. Then there is an equivalence of stacks
\[
\cSpec(\COX)\simeq\Spec(\Bbbk)\times_{\cSpec(\CX)}\Spec(\Bbbk).
\]
 \end{proposition}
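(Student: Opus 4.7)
The plan is to combine \cref{lemma:cspecmonoidal} with a derived Eilenberg--Moore identification of $\COX$ with the cobar construction $\Bbbk\otimes_{\CX}\Bbbk$, where the two copies of $\Bbbk$ are regarded as $\CX$-algebras via the augmentation induced by the pointing $\eta\colon\{*\}\to X$. Concretely, I would first invoke \cref{lemma:cspecmonoidal} with $A\coloneqq\CX$ and $R\coloneqq S\coloneqq\Bbbk$: both $R$ and $S$ are trivially coconnective as $\Bbbk$-algebras, so the lemma produces a natural equivalence of stacks
\[
\Spec(\Bbbk)\times_{\cSpec(\CX)}\Spec(\Bbbk)\simeq\cSpec{\lp\Bbbk\otimes_{\CX}\Bbbk\rp}.
\]
It then suffices to establish an equivalence of $\Einf$-$\Bbbk$-algebras $\Bbbk\otimes_{\CX}\Bbbk\simeq\COX$.

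Next I would obtain this equivalence by applying the contravariant $\Einf$-cochain functor $\mathrm{C}^{\bullet}(-;\Bbbk)$ to the classical based-loop homotopy pullback square $\Omega_*X\simeq\{*\}\times_X\{*\}$. The essential input is the $\infinity$-categorical enhancement of the Eilenberg--Moore theorem: for a simply connected space $X$ whose $\Bbbk$-cochain algebra is locally small, the functor $\mathrm{C}^{\bullet}(-;\Bbbk)$ carries a homotopy pullback $Y\times_X Z$ of such spaces to a pushout $\mathrm{C}^{\bullet}(Y;\Bbbk)\otimes_{\CX}\mathrm{C}^{\bullet}(Z;\Bbbk)$ in $\CAlg_{\Bbbk}$. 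The $2$-Koszul hypothesis packages exactly the simply-connectedness and cohomological $\Bbbk$-finite type needed to apply this comparison, producing the desired $\COX\simeq\Bbbk\otimes_{\CX}\Bbbk$. Moreover, since $\Omega_*X$ is $1$-Koszul by \cref{prop:nkoszuldeloops}, the algebra $\COX$ is coconnective and locally small, so its cospectrum is well-defined and the resulting identification is honestly one of stacks.

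The principal obstacle in making this rigorous is the Eilenberg--Moore comparison itself, which fails for non-simply-connected or non-finite-type spaces; the hypotheses built into the definition of a $2$-Koszul space are tailored exactly to secure this step, so I expect the argument to go through cleanly once the comparison is appropriately located in the literature (or deduced from the derived Koszul-duality framework already being invoked in \cref{sec:nkoszulcoaffine}, which itself relies on the reciprocal $\Einf$-Koszul duality between $\CX$ and $\mathrm{C}_{\bullet}(\Omega_*X;\Bbbk)$ under precisely these finiteness conditions).
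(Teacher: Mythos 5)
Your proposal is correct and follows essentially the same route as the paper: the paper likewise applies the Eilenberg--Moore theorem (citing \cite[Corollary $1.1.10$]{dagxiii}), using simple connectedness and the finite-type property of $\mathrm{C}^{\bullet}(\Omega_*X;\Bbbk)$ guaranteed by \cref{prop:nkoszuldeloops}, to obtain $\Bbbk\otimes_{\CX}\Bbbk\simeq\COX$, and then concludes via the cospectrum functor and \cref{lemma:cspecmonoidal}. Your additional remark that coconnectivity of $\Bbbk\otimes_{\CX}\Bbbk$ is needed for the cospectrum to make sense is already built into the statement of \cref{lemma:cspecmonoidal}, so nothing further is required.
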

\begin{proof}
It is clear that $\COX$ is a locally small $\Bbbk$-algebra: if $n\geqslant2$, this follows from a combination of \cref{prop:nkoszuldeloops} and \cref{remark:cochainsarelocallysmall}; for $n=1$, we argue as follows. We observe that the cohomology $\mathrm{H}^{\bullet}(\Omega_*X;\Bbbk)$ is the graded dual of $\mathrm{H}_{\bullet}(\Omega_*X;\Bbbk)$, which in turn is the universal enveloping graded $\Bbbk$-algebra of the graded Lie algebra $\pi_{\bullet}(\Omega_*X,\delta_x)\otimes_{\ZZ}\Bbbk$ endowed with its Whitehead bracket (see for example \cite[Theorem $16.13$]{FHTrational}). Since the latter can be realized as a quotient of the tensor $\Bbbk$-algebra $\mathrm{T}^{\bullet}(\pi_{\bullet}(\Omega_*X,\delta_x)\otimes_{\ZZ}\Bbbk)$, it is sufficient to prove that
\[
\mathrm{T}^{\bullet}(\pi_{\bullet}(\Omega_*X,\delta_x)\otimes_{\ZZ}\Bbbk)_n\coloneqq\bigoplus_{p\geqslant0}\lp\bigoplus_{i_1+\cdots+i_p=n}\pi_{i_1}(\Omega_*X,\delta_x)\otimes_{\ZZ}\cdots\otimes_{\ZZ}\pi_{i_p}(\Omega_*X,\delta_x)\otimes_{\ZZ}\Bbbk\rp
\]
is finite dimensional for all $n\geqslant0$, which is a straightforward computation using the fact that $\pi_{\bullet}(\Omega_*X;\delta_x)\otimes_{\ZZ}\Bbbk$ is bounded and finitely generated as a graded $\Bbbk$-module. So we can apply the Eilenberg--Moore theorem (see for example \cite[Corollary $1.1.10$]{dagxiii}) and deduce the existence of a canonical equivalence$$\Bbbk\otimes_{\CX}\Bbbk\simeq\operatorname{C}^{\bullet}(\Omega_*X;\Bbbk).$$
Applying the cospectrum functor and using \cref{lemma:cspecmonoidal} we deduce our claim.
\end{proof}
 We are now ready to state the main stepping stone in proving \cref{thm:mainkoszuln} when $n=1$. 
 \begin{proposition}
    \label{lemma:locfullyfaithful}
Let $X$ be a pointed $2$-Koszul space over a field $\Bbbk$ of characteristic zero. Then the functor
\[
\mathrm{Loc}_{(\Omega_*X)^{\mathrm{aff}}}\colon\mathrm{Lin}_{\QCoh(\cSpec(\COX))}\PrLU\longrightarrow\ShvCat(\cSpec(\COX))
\]
is fully faithful.
 \end{proposition}
Since the proof of \cref{lemma:locfullyfaithful} is   somewhat convoluted and relies on many auxiliary results coming from rational homotopy theory and algebraic topology, we postpone it until later in \cref{sec:proof_of_key}. For the moment, we simply show how it implies that \cref{thm:mainkoszuln} holds when $n=1$. 
\begin{proposition}
\label{thm:mainkoszultrue}
Let $X$ be a pointed $2$-Koszul space and let $\Bbbk$ a field of characteristic zero. Then the affinization map $\mathrm{aff}_X\colon X_{\operatorname{B}}\to\cSpec(\CX)$ induces an equivalence of $2$-categories
\[
\mathrm{aff}^\ast_X\colon2\ShvCattwo(\cSpec(\CX))\overset{\simeq}{\longrightarrow}2\LocSysCattwo(X;\Bbbk).
\]
\end{proposition}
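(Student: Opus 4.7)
The plan is to prove that $\mathrm{aff}^\ast_X$ is an equivalence of $2$-categories by exhibiting both source and target as $2$-monadic over $2\LinkPrLUtwo[1]$ with equivalent monads, and then applying Barr--Beck--Lurie. The starting point is the commutative triangle of pullback $2$-functors induced by the pointings $\eta\colon\{\ast\}_{\operatorname{B}}\to X_{\operatorname{B}}$ and $\iota\colon\Spec(\Bbbk)\to\cSpec(\CX)$, together with the factorization $\iota\simeq\mathrm{aff}_X\circ\eta$ already exploited in the proof of \cref{lemma:locfullyfaithful}.

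The first step is to describe the $\eta^\ast$-side. Since $X$ is $2$-Koszul and hence in particular $2$-connected, the categorified monodromy equivalence \cref{conj:infinityn} identifies $2\LocSysCattwo(X;\Bbbk)$ with the $2$-category of categorical modules over $\LocSys(\Omega_\ast X;\Bbbk)$ equipped with its convolution monoidal structure. Under this identification $\eta^\ast$ becomes the $2$-categorical forgetful functor, which is tautologically monadic with monad given by tensoring with $\LocSys(\Omega_\ast X;\Bbbk)$.

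The second, and most demanding, step is the analogous monadic description of $\iota^\ast$. By base change, the endomorphism monad $\iota^\ast\iota_\ast$ is computed via the \v{C}ech nerve of $\iota$, that is, the cosimplicial object whose $k$-th level is the $(k+1)$-fold fibre product $\Spec(\Bbbk)^{\times_{\cSpec(\CX)}(k+1)}$. By \cref{prop:cospecdeloop} the first non-trivial level is $\cSpec(\COX)$, and by iterated application of \cref{lemma:cspecmonoidal} together with Eilenberg--Moore as in the proof of \cref{prop:cospecdeloop}, the higher levels identify with $\cSpec$ of cochain algebras on iterated Cartesian products of $\Omega_\ast X$. Since $\Omega_\ast X$ is $1$-Koszul by \cref{prop:nkoszuldeloops} and its finite products are $1$-Koszul by \cref{remark:nkoszul}, \cref{cor:koszulalgebraic} yields symmetric monoidal equivalences $\QCoh(\Spec(\Bbbk)^{\times_{\cSpec(\CX)}(k+1)})\simeq\LocSys((\Omega_\ast X)^{\times k};\Bbbk)$ which are compatible with the cosimplicial structure maps and which agree, under $\mathrm{aff}_X$, with the corresponding identifications for the \v{C}ech nerve of $\eta$. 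To upgrade this to $2$-categorical monadicity of $\iota^\ast$, I would verify the Barr--Beck--Lurie hypotheses in the style of \cref{lemma:descent}: the full faithfulness of $\mathrm{Loc}_{\cSpec(\CX)}$ from \cref{lemma:locfullyfaithful} handles the unit side, while conservativity would follow from the simplicial presentation $\cSpec(\CX)\simeq\colim_{[n]}\Spec(A^n)$ with $A^0\simeq\Bbbk$ and each $A^n$ discrete, combined with a Beck--Chevalley analysis analogous to that of \cite[Appendix C]{1affineness}.

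With both $2$-categories realized as modules over equivalent monads on $2\LinkPrLUtwo[1]$, the conclusion follows: the $2$-functor $\mathrm{aff}^\ast_X$ intertwines the two adjunctions and induces precisely the $1$-categorical equivalence of \cref{cor:koszulalgebraic} between the two monads, hence is itself an equivalence. The principal obstacle is the conservativity portion of the Barr--Beck argument for $\iota^\ast$: this is where the subtleties of coaffine stacks rather than Betti stacks enter, and, as observed in the paragraph preceding the statement, is precisely the reason the case $n=1$ is substantially more demanding than the case $n\geq 2$, where ambidexterity of pullback/pushforward for $\ShvCat^n$ would make the analogous descent essentially formal.
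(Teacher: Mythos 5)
Your overall skeleton --- deloop $\cSpec(\CX)$ as $\mathbf{B}\cSpec(\COX)$, transport the problem to the loop space via the $1$-categorical Koszul duality of \cref{cor:koszulalgebraic}, and conclude with the monodromy equivalence --- matches the paper's, and your treatment of the Betti side ($\eta^\ast$ monadic with monad $-\otimes\LocSys(\Omega_*X;\Bbbk)$) is fine. The gap is on the coaffine side, exactly at the sentence ``by base change, the endomorphism monad $\iota^\ast\iota_\ast$ is computed via the \v{C}ech nerve of $\iota$.'' For sheaves of $(\infty,1)$-categories this base change is not available for free: the morphism $\Spec(\Bbbk)\to\cSpec(\CX)$ is not (obviously) affine schematic, so $\cSpec(\eta)_\ast$ is only a right adjoint and there is no ambidexterity to invoke; identifying $\iota^\ast\iota_\ast$ with $-\otimes_{\Mod_{\Bbbk}}\QCoh(\cSpec(\COX))$ amounts to commuting $\scrC\otimes_{\Mod_{\Bbbk}}(-)$ past the limit $\QCoh(\cSpec(\COX))\simeq\lim\Mod_R$, which tensor products of presentable categories do not do in general. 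The paper flags this very point in the remark following the proof of \cref{thm:mainkoszuln}: that commutation is only known \emph{a posteriori}, as a consequence of the statement you are trying to prove. Your appeal to a Beck--Chevalley analysis analogous to Appendix C of \cite{1affineness} is where the circularity would bite, since verifying adjointability of the relevant squares on the coaffine side is equivalent to the projection formula you are assuming.

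The paper's way around this is not to run Barr--Beck on $\cSpec(\CX)$ directly but to feed the group stack $G=\cSpec(\COX)$ into Gaitsgory's machinery for $\ShvCat(\mathbf{B}G)$ (\cite[Section $10.2$]{1affineness}, cf.\ \cref{prop:gaitsgorygroupstacks}): the hypotheses there are the full faithfulness of $\mathrm{Loc}_{G}$ --- note, of $\mathrm{Loc}_{\cSpec(\COX)}$, for the \emph{group}, not $\mathrm{Loc}_{\cSpec(\CX)}$ as you write --- together with dualizability and \emph{rigidity} of $\QCoh(G)$ under convolution. All three are imported from the Betti side through the monoidal equivalence $\QCoh(\cSpec(\COX))\simeq\LocSys(\Omega_*X;\Bbbk)$ of \cref{cor:koszulalgebraic}, and it is the rigidity that substitutes for the missing ambidexterity and legitimizes the relevant base change. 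Your proposal never mentions rigidity or self-duality of $\QCoh(\cSpec(\COX))$, which is the one ingredient that actually closes the argument; without it the monadic comparison does not get off the ground.
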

\begin{proof}
Recall that $\Omega_*X$ is a $\Ebb_1$-monoid and it is moreover $1$-Koszul (\cref{prop:nkoszuldeloops}), therefore we can use \cref{remark:qcoh(cspec)_good} and \cref{lemma:locfullyfaithful} to see that we are in the setting of \cite[$\S10.2$]{1affineness}. Therefore we can write
\[
\ShvCat(\cSpec(\CX))\simeq\mathrm{Lin}_{\QCoh(\cSpec(\COX))}\PrLU.
\]
Here, $\QCoh(\cSpec(\COX))$ is seen as a monoidal category via the convolution tensor product induced by the group structure on $\cSpec(\COX)$. Under the equivalence of \cref{cor:koszulalgebraic}, this monoidal structure corresponds to the Day convolution monoidal structure on $\LocSys(\Omega_*X;\Bbbk)$. Hence, we obtain a chain of equivalences
\begin{align*}
\ShvCat(\cSpec(\CX))&\overset{\simeq}{\longrightarrow}\mathrm{Lin}_{\QCoh(\cSpec(\COX))}{\PrLU}\\&\overset{\simeq}{\longrightarrow}\mathrm{Lin}_{\LocSys(\Omega_*X;\Bbbk)}{\PrLU}\\&\overset{\simeq}{\longrightarrow}\LocSysCat(X;\Bbbk),
\end{align*}
where the second equivalence is obtained by base change along $\mathrm{aff}^\ast_{\Omega_*X}$ and the third equivalence is due to \cref{conj:infinityn}. Let us call $\Psi$ the composition of all these equivalences. To check that this functor is equivalent to $\mathrm{aff}^\ast_X$, we simply notice that $\Psi$ sends a sheaf of categories $\scrF$ over $\cSpec(\CX)$ to a local system of categories over $X$ with the same stalks as $\aff^\ast_X\scrF$. Since local systems are \textit{hyper}sheaves, this implies that $\Psi(\scrF)\simeq\aff_X^\ast\scrF$.

We are left to promote such equivalence to a $2$-categorical equivalence. In order to do this, we just need to check that the equivalence $\mathrm{aff}^\ast_X$ intertwines the coaugmentations from $\LinkPrLU$ on both sides. This is clear since such coaugmentations are induced by pulling back along the terminal morphisms $X_{\operatorname{B}}\to\Spec(\Bbbk)$ and $\cSpec(\CX)\to\Spec(\Bbbk)$, and $\mathrm{aff}_X$ obviously commutes with them.
\end{proof}
\begin{proof}[Proof of \cref{thm:mainkoszuln}]
\cref{thm:mainkoszultrue} proves the case where $n=1$. For a general $n\geqslant2$: assume that we have proved \cref{thm:mainkoszuln} for all integers $1\leqslant k\leqslant n-1$. Let
\[
\cSpec(\eta)\colon\Spec(\Bbbk)\overset{\eta_{\operatorname{B}}}{\longrightarrow} X_{\operatorname{B}}\overset{\mathrm{aff}^\ast_X}{\longrightarrow}\cSpec(\CX)
\]
be the pointing of $\cSpec(\CX)$ induced by the chosen base point $\eta\colon\left\{*\right\}\to X$. This produces a commutative diagram of categories
\begin{align}
\label{fundamentaldiagram}
\begin{tikzpicture}[scale=0.75,baseline=(current  bounding  box.center)]
    \node (a) at (-4,0) {$\ShvCat^n(\cSpec(\CX)$};
    \node (b) at (4,0) {$\LocSysCat^n(X;\Bbbk)$};
    \node (c) at (0,-2) {$\LinkPrLU[n].$};
    \draw[->,font=\scriptsize] (a) to[bend right] node[left]{$\cSpec(\eta)^\ast$} (c);
    \draw[->,font=\scriptsize] (b) to[bend left] node[right]{$\eta^*_{\operatorname{B}}$} (c);
    \draw[->,font=\scriptsize] (a) to node[above]{$\operatorname{aff}_X^*$}(b);
\end{tikzpicture}
\end{align}
We will prove that the diagram \eqref{fundamentaldiagram} satisfies the hypotheses of \cite[Corollary $4.7.3.16$]{ha}. This will allow us to apply the Barr--Beck--Lurie's monadicity theorem, and then conclude that the $n$-categorical equivalence holds as well thanks to \cref{remark:monadicity}.
\begin{enumerate}
\item The functor $\eta^*_{\operatorname{B}}$ is both monadic and comonadic: it is conservative, it commutes with all colimits, and is part of  an ambidextrous adjunction. Its adjoint is computed as a left Kan extension along the pointing $\eta\colon\left\{*\right\}\to X$, which is the same as a right Kan extension in virtue of \cref{remark:shvcatleftrightkan}. With our connectedness assumptions on $X$, this adjoint is extremely simple to describe: under the equivalence$$\LocSysCat^n(X;\Bbbk)\simeq\operatorname{Lin}_{n\LocSysCattwo^{n-1}(\Omega_*X;\Bbbk)}\PrLU[n]$$the functor $\eta^*_{\operatorname{B}}$ corresponds to forgetting the $\LocSysCat^{n-1}(\Omega_*X;\Bbbk)$-module structure, and the adjoint is given by$$n\bm{\scrC}\mapsto n\bm{\scrC}\otimes_{n\LinkPrLUtwotiny[n-1]}n\LocSysCattwo^{n-1}(\Omega_*X;\Bbbk).$$
 \item The functor $\cSpec(\eta)^\ast$ is conservative. Indeed, suppose that a morphism of two quasi-coherent sheaves of $n$-categories $F\colon n\bm{\scrF}\to n\bm{\scrG}$ over $\cSpec(\CX)$ is an equivalence when considering local sections over $\Spec(\Bbbk)$: we want to prove that it is actually an equivalence on \textit{all} local sections. Since $\cSpec(\CX)$ is a coaffine stack, we can write it as a geometric realization
 \[
 \cSpec(\CX)\simeq\colim_{[m]\in\bDelta^{\op}}\Spec(A^m),
 \]
 where each $A^m$ is discrete and $A^0\simeq \Bbbk$ (\cite[Proposition 4.4.4]{dagviii}). Therefore, we have
 \[
\ShvCat^n(\cSpec(\CX))\simeq\lim_{[m]\in\bDelta}\mathrm{Lin}_{A^m}\PrLU[n].
\]
Therefore, we have that
\[n\bm{\Gamma}(F,\Spec({A^m}))\simeq n\bm{\Gamma}(F,\Spec({\Bbbk}))\otimes_{n\LinkPrLUtwotiny[n-1]}\mathrm{
id}_{n\mathbf{Lin}_{A^m}\PrLUtwotiny[n-1]}
\]
and this implies our claim.
\item The functor $\cSpec(\eta)^\ast$ commutes with all limits and colimits. Indeed, as observed in \cref{remark:shvcatleftrightkan}, it admits a both left and right adjoint $\cSpec(\eta)_\ast$. 
 \item For any $\Bbbk$-linear presentable $n$-category $n\bm{\scrC}$, the natural $n$-functor$$\mathrm{aff}_X^\ast\lp\cSpec(\eta)_\ast(n\bm{\scrC})\rp\longrightarrow\eta_{\operatorname{B},\ast}(n\bm{\scrC})$$obtained via adjunction from the counit $n$-functor $$\cSpec(\eta)^\ast(\cSpec(\eta)_\ast(n\bm{\scrC}))\simeq\eta^\ast_{\operatorname{B}}\lp\mathrm{aff}_X^\ast\lp\cSpec(\eta)_\ast(n\bm{\scrC})\rp\rp\longrightarrow n\bm{\scrC}$$is an equivalence. Since both $\cSpec(\eta)^\ast$ and $\eta_{\operatorname{B}}^\ast$ are conservative, we can reduce ourselves to check whether the $n$-functor at the level of local sections over $\Spec(\Bbbk)$
\begin{align}
\label{mustbeequiv}
n\bm{\Gamma}\lp\Spec(\Bbbk),\mathrm{aff}_X^\ast\lp\cSpec(\eta)_\ast(n\bm{\scrC})\rp\rp\longrightarrow n\bm{\Gamma}\lp\Spec(\Bbbk),\eta_{\operatorname{B},\ast}(n\bm{\scrC})\rp
\end{align}
is an equivalence. Under the equivalence$$\LocSysCat^n(X;\Bbbk)\simeq\LMod_{n\LocSysCattwo^{n-1}(\Omega_*X;\Bbbk)}{\lp \PrLU[n]\rp},$$the codomain of the functor \eqref{mustbeequiv} can be written as
\[
n\bm{\Gamma}(\Spec(\Bbbk),\eta_{\operatorname{B},\ast}(n\bm{\scrC}))\simeq n\bm{\scrC}\otimes_{n\LinkPrLUtwotiny[n-1]}n\LocSysCattwo^{n-1}(\Omega_*X;\Bbbk).
\]
The left hand side, using \cref{remark:shvcatleftrightkan} and \cref{prop:cospecdeloop}, can be instead described as
\[
n\bm{\Gamma}\lp\Spec(\Bbbk),\mathrm{aff}_X^\ast\lp\cSpec(\eta)_\ast(n\bm{\scrC})\rp\rp\simeq\colim_{\substack{\Spec(R)\to\cSpec(\COX)\\R\in\CAlg^{\operatorname{disc}}_{\Bbbk}}}n\bm{\scrC}\otimes_{n\LinkPrLUtwotiny[n-1]}n\mathbf{Lin}_R\PrLUtwo[n-1].
\]
Since the tensor product of presentable $n$-categories is compatible with colimits, we can swap the tensor product and the colimit and using once again \cref{remark:shvcatleftrightkan} we can write
\begin{align*}
n\bm{\Gamma}\lp\Spec(\Bbbk),\mathrm{aff}_X^\ast\lp\cSpec(\eta)_\ast(n\bm{\scrC})\rp\rp&\simeq n\bm{\scrC}\otimes_{n\LinkPrLUtwotiny[n-1]}\lp\colim_{\substack{\Spec(R)\to\cSpec(\COX)\\R\in\CAlg^{\operatorname{disc}}_{\Bbbk}}}n\mathbf{Lin}_R\PrLUtwo[n-1]\rp\\&\simeq n\bm{\scrC}\otimes_{n\LinkPrLUtwotiny[n-1]}n\ShvCattwo^{n-1}(\cSpec(\COX)).
\end{align*}
Therefore, the $n$-functor \eqref{mustbeequiv} can be interpreted as the tensor product over $n\LinkPrLUtwo[n-1]$ of the affinization $n$-functor
\[
\mathrm{aff}_{\Omega_*X}^\ast\colon n\ShvCattwo^{n-1}(\cSpec(\COX))\longrightarrow n\LocSysCattwo^{n-1}(\Omega_*X;\Bbbk)
\]
with the identity $n$-functor of $n\bm{\scrC}$. Since $\Omega_*X$ is $n$-Koszul such that $\pi_q(\Omega_*X)\otimes_{\ZZ}\QQ$ is a finitely generated $\QQ$-vector space for all $q\geqslant0$ (\cref{prop:nkoszuldeloops}), the $n$-functor \eqref{mustbeequiv} is an equivalence because of the inductive hypothesis, as desired.
\end{enumerate}
So, Barr--Beck--Lurie's monadicity theorem allows us to conclude. 
\end{proof}
\begin{remark}
The reason why the above proof does not extend straightforwardly to the case where $n=1$ is that, in this case, the functor $\cSpec(\eta)^\ast$ can only be proved to be comonadic. Indeed, it is not obvious that $\cSpec(\eta)_\ast$ is both a left and right adjoint to $\cSpec(\eta)^\ast$. In particular, it is not obvious how to check that the natural functor
\begin{align*}
\scrC\otimes_{\Mod_{\Bbbk}}{\lp\lim_{\substack{\Spec(R)\to\cSpec(\COX)\\ R\in\CAlg^{\geqslant0}_{\Bbbk}}}\Mod_R\rp}\longrightarrow\lim_{\substack{\Spec(R)\to\cSpec(\COX)\\ R\in\CAlg^{\geqslant0}_{\Bbbk}}}\scrC\otimes_{\Mod_{\Bbbk}}\Mod_R
\end{align*}
is an equivalence. When $X$ is $2$-Koszul this is true, \textit{a posteriori}, because of \cref{thm:mainkoszultrue}.
\end{remark}
\begin{remark}
\label{remark:E2-Koszuldeloops}
Let $n\geqslant1$ be an integer, and suppose that $X$ is a pointed $(n+1)$-Koszul space over a field $\Bbbk$ of characteristic zero. Then, since $X$ is in particular $n$-Koszul, one can expect to recover the $\Ebb_{n}$-Koszul duality equivalence between $(n-1)$-categorical modules by ``delooping'' $\Ebb_{n+1}$-Koszul duality between $n$-categorical modules. This is indeed the case: notice that the unit for the monoidal structure on $(n+1)\ShvCattwo^{n}(\cSpec(\CX))$ is the sheaf $n\underline{\smash{\mathbf{ShvCat}}}^{n-1}(-)$ whose global sections are precisely $n\ShvCattwo^{n-1}(\cSpec(\CX))$. So we have an equivalence of mapping $n$-categories between
\[
n\FuninLtwo_{(n+1)\ShvCattwo^{n}(\cSpec(\CX))}{\lp n\underline{\smash{\mathbf{ShvCat}}}^{n-1}(-),\hsp n\underline{\smash{\mathbf{ShvCat}}}^{n-1}(-)\rp}
\]
and
\[
n\FuninLtwo_{(n+1)\LinkPrLUtwotiny[n]}{\lp n\LinkPrLUtwo[n-1],\hsp n\ShvCattwo^{n-1}(\cSpec(\CX))\rp}.
\]
Indeed, both objects are naturally equivalent to $n\ShvCattwo^{n-1}(\cSpec(\CX))$ because $n\LinkPrLUtwo[n-1]$ is the monoidal unit inside $(n+1)\LinkPrLUtwo[n]$.

Similarly, the monoidal unit for $(n+1)\LocSysCattwo^n(X;\Bbbk)$ is the trivial local system$$n\underline{\smash{{\mathbf{LocSysCat}}}}^{n-1}(-)\coloneqq\mathrm{const}{\lp n\LinkPrLUtwo[n-1]\rp},$$and so we have an equivalence of mapping $n$-categories between 
\[
n\FuninLtwo_{(n+1)\LocSysCattwo^n(X;\Bbbk)}{\lp n\underline{\smash{{\mathbf{LocSysCat}}}}^{n-1}(-),\hsp n\underline{\smash{{\mathbf{LocSysCat}}}}^{n-1}(-)\rp}
\]
and
\[
n\FuninLtwo_{(n+1)\LinkPrLUtwotiny[n]}{\lp n\LinkPrLUtwo[n-1],\hsp n\LocSysCattwo^{n-1}(X;\Bbbk)\rp}\simeq n\LocSysCattwo^{n-1}(X;\Bbbk).
\]
The $(n+1)$-functor $\mathrm{aff}^\ast_X$ sends $n\underline{\smash{\mathbf{ShvCat}}}^{n-1}(-)$ to $n\underline{\smash{{\mathbf{LocSysCat}}}}^{n-1}(-)$, because it is strongly monoidal and hence preserves the monoidal unit. Since $\mathrm{aff}^\ast_X$ is also an equivalence of $(n+1)$-categories, it induces an equivalence at the level of mapping $n$-categories, and so it recovers the $\Ebb_{n-1}$-Koszul duality equivalence for $(n-2)$-categorical modules. Applying iteratively this argument, we recover the $\Ebb_k$-Koszul duality equivalence for modules for all $k\leqslant n$, up to the classical $\Ebb_1$-Koszul duality for modules of \cref{cor:koszulalgebraic}.
\end{remark}
\subsection{Proof of \cref{lemma:locfullyfaithful}}
\label{sec:proof_of_key}
We dedicate this entire section to the proof of \cref{lemma:locfullyfaithful}. We will deduce it as a straight-forward corollary of the following more general result.
 \begin{proposition}
\label{prop:key}
Let $Y$ be a pointed $0$-Koszul space. Assume $Y$ is a $\Ebb_1$-monoid, and let $\Bbbk$ be a field of characteristic zero. Then the functor
\[
\mathrm{Loc}_{Y^{\mathrm{aff}}}\colon\mathrm{Lin}_{\QCoh(\cSpec(\CY))}\PrLU\longrightarrow\ShvCat(\cSpec(\CY))
\]
is fully faithful.
\end{proposition}
The proof of \cref{prop:key} involves many steps and reductions to the case in which our $0$-Koszul $\Ebb_1$-monoid $Y$ has only finitely many non-trivial rational homotopy groups. We start with the following.
\begin{lemma}
\label{lemma:key1}
Let $n\geqslant1$ be an integer, let $\pi$ be an abelian group of finite rank, and let $K(\pi,n)$ be the corresponding Eilenberg--MacLane space. Let $\Bbbk$ be a field of characteristic zero. Then the functor
\[
\mathrm{Loc}_{K(\pi,n)^{\mathrm{aff}}}\colon\mathrm{Lin}_{\QCoh(\cSpec(\CK[n]))}\PrLU\longrightarrow\ShvCat(\cSpec(\CK[n]))
\]
is fully faithful.
\end{lemma}
\begin{proof}
We argue by induction on $n$. If $n=1$ or $n=2$, then we have an equivalence of $\Bbbk$-algebras
\[\CK[n]\simeq\Sym_{\Bbbk}((\pi\otimes_\ZZ\Bbbk)^{\vee}[-1]).\]When $\Bbbk=\QQ$ this is \cite[Corollary 1.1.16]{dagxiii}, and for any field extension of $\QQ$ it follows from the universal coefficient theorem. Thus, $\cSpec(\CK[n])$ is $1$-affine (\cite[Remark 2.5.2 and Theorem 2.5.7.(a)]{1affineness}), so the claim is clear. 

For $n\geqslant3$, we can consider any base point of $K(\pi,n)$ (for example, the one induced by the group morphism $0\to \pi$) and consider the based loop space $\Omega_*K(\pi,n)\simeq K(\pi,n-1)$. Notice that for any $\Bbbk$-module $M$ we have
\[
\Bbbk\otimes_{\Sym_{\Bbbk}(M[-n])}\Bbbk\simeq\Sym_{\Bbbk}(M[-n+1]),
\]
so \cref{lemma:cspecmonoidal} implies that $\cSpec(\CK[n-1])\simeq\Omega_*\cSpec(\CK[n])$. By the inductive hypothesis, we know that $\mathrm{Loc}_{K(\pi,n-1)^{\aff}}$ is fully faithful. Moreover, for $n\geqslant3$ the space $\CK[n-1]$ is always $1$-Koszul. So using \cref{remark:qcoh(cspec)_good} and \cite[$\S10$]{1affineness} we can write a chain of equivalences
\begin{align*}
\ShvCat(\cSpec(\CK[n]))&\simeq\mathrm{Lin}_{\QCoh(\cSpec(\CK[n-1])))^{\circledast}}\PrLU\\&\simeq\mathrm{Lin}_{\LocSys(K(\pi,n-1);\Bbbk)}\PrLU.
\end{align*}
Concatenating with the equivalence of \cref{conj:infinityn} we finally obtain
\begin{align*}
\ShvCat(\cSpec(\CK[n]))\simeq\LocSysCat(K(\pi,n);\Bbbk).
\end{align*}
The latter equivalence is induced by pulling back along the affinization map $\aff_{K(\pi,n)}$: indeed, one can notice that both functors yield local systems of categories over $K(\pi,n)$ with equivalent stalks. It follows that we have a commutative diagram of categories
\[
\begin{tikzpicture}[scale=0.75,baseline=(current  bounding  box.center)]
\node (a) at (-5,-2.5){$\ShvCat(\cSpec(\CK[n]))$};
\node (b) at (5,-2.5){$\LocSysCat(K(\pi,n);\Bbbk)$};
\node (c) at (-5,0){$\mathrm{Lin}_{\QCoh(\cSpec(\CK[n]))}{\PrLU}$};
\node (d) at (5,0){$\mathrm{Lin}_{\LocSys(K(\pi,n);\Bbbk)}{\PrLU}.$};
\draw[->,font=\scriptsize] (c) to node[left]{$\mathrm{Loc}_{K(\pi,n)^{\mathrm{aff}}}$}(a);
\draw[->,font=\scriptsize] (d) to node[right]{$\mathrm{Loc}_{K(\pi,n)_{\operatorname{B}}}$}(b);
\draw[->,font=\scriptsize] (a) to node[below]{$\mathrm{aff}^\ast_{K(\pi,n)}$}node[above]{$\simeq$}(b);
\draw[->,font=\scriptsize] (c) to node[above]{$\simeq$}(d);
\end{tikzpicture}
\]
where the horizontal functors are equivalences and the right vertical functor is fully faithful because of \cref{prop:fullyfaithfulness}. So the left vertical functor is fully faithful as well.
\end{proof}
\begin{lemma}
\label{lemma:key2}
Let $n_1,\ldots,n_k$ be integers such that $n_i\geqslant 1$ for all $1\leqslant i\leqslant k$. Let $\pi_1,\ldots,\pi_k$ be abelian groups of finite rank, and let $K(\pi_1,n_1),\ldots,K(\pi_k,n_k)$ be the corresponding Eilenberg--MacLane spaces. Let $$K\coloneqq\prod_{i=1}^k K(\pi_i,n_i)$$denote their product and let $\Bbbk$ a field of characteristic zero.. Then the functor $$\mathrm{Loc}_{K^{\aff}}\colon\mathrm{Lin}_{\QCoh(\cSpec(\mathrm{C}^{\bullet}(K;\Bbbk))}\PrLU\longrightarrow\ShvCat(\cSpec(\mathrm{C}^{\bullet}(K;\Bbbk)))$$ is fully faithful.
\end{lemma}
\begin{proof}
We can assume that $n_i\neq n_j$ when $i\neq j$ (otherwise, we can replace $K(\pi_1,n_1)\times K(\pi_j,n_j)$ with the homotopically equivalent $K(\pi_i\times\pi_k,n_i)$). Next, notice that the Eilenberg--Moore theorem yields an equivalence of commutative $\Bbbk$-algebras
\[
\bigotimes_{i=1}^k\mathrm{C}^{\bullet}(K(\pi_i,n_i);\Bbbk)\simeq\mathrm{C}^{\bullet}(K;\Bbbk).
\]
It follows from \cref{lemma:cspecmonoidal} that
\[
\cSpec(\mathrm{C}^{\bullet}(K;\Bbbk))\simeq\prod_{i=1}^k\cSpec(\mathrm{C}^{\bullet}(K(\pi_i,n_i);\Bbbk)).
\]
Since for any prestacks $\scrX$ and $\scrY$ the functor
\[
\ShvCat(\scrX)\otimes_{\mathrm{Pr}^{\mathrm{L}}_{(\scriptscriptstyle\infty,1)}}\ShvCat(\scrY)\longrightarrow\ShvCat(\scrX\times\scrY)
\]
is an equivalence (\cite[Proposition 14.2.15]{stefanichthesis}), it will suffice to prove that 
\[
\QCoh(\cSpec(\mathrm{C}^{\bullet}(K;\Bbbk)))\simeq\bigotimes_{i=1}^k\QCoh(\cSpec(\mathrm{C}^{\bullet}(K(\pi_i,n_i);\Bbbk))).
\]
In this way, $\mathrm{Loc}_{K^{\aff}}$ is realized as the tensor product of the functors $\mathrm{Loc}_{K(\pi_i,n_i)^{\aff}}$ which are fully faithful (\cref{lemma:key1}), and so it is fully faithful as well. By induction, we can reduce ourselves to the case where $k=2$. In this case, at least one of $n_1$ and $n_2$ is bigger than $1$ (let us assume that $n_2\geqslant2$), so \cref{cor:koszulalgebraic} yields
\[
\QCoh(\cSpec(\mathrm{C}^{\bullet}(K(\pi_2,n_2);\Bbbk)))\simeq\LocSys(K(\pi_2,n_2);\Bbbk)
\]
that is self-dual as a presentable stable category. Writing $K_i\coloneqq \cSpec(\mathrm{C}^{\bullet}(K(\pi_i,n_i);\Bbbk))$, we have
\begin{align*}
\QCoh\lp K_1\times K_2\rp&\simeq\QCoh\lp\colim_{\mathrm{Aff}_{/K_1}}\Spec(R)\times K_2\rp\\&\simeq\lim_{\mathrm{Aff}^{\op}_{/K_1}}
\QCoh(\Spec(R)\times K_2)\\&\simeq\lim_{\mathrm{Aff}^{\op}_{/K_1}}\LocSys(K_2;R)\\
&\simeq\LocSys(K_2;\QCoh(K_1))\\&\simeq\LocSys(K_2;\Bbbk)\otimes_{\Mod_{\Bbbk}}\QCoh(K_1)\simeq\QCoh(K_1)\otimes_{\Mod_{\Bbbk}}\QCoh(K_2).
\end{align*}
\end{proof}
\begin{corollary}
\label{cor:prop_key_finite}
Let $Y$ be a pointed $0$-Koszul space. Assume $Y$ is a $\Ebb_1$-monoid and that the graded rational vector space $\pi_{\bullet}(Y_{\QQ},y)$ is finitely generated. Let $\Bbbk$ be a field of characteristic zero. Then the functor
\[
\mathrm{Loc}_{Y^{\aff}}\colon\mathrm{Lin}_{\QCoh(\cSpec(\CY))}\PrLU\longrightarrow\ShvCat(\cSpec(\CY))
\]
is fully faithful.
\end{corollary}
\begin{proof}
Since $\CY$ and $\mathrm{C}^{\bullet}(Y_{\QQ};\Bbbk)$ are equivalent $\Bbbk$-algebras for any field $\Bbbk$ of characteristic zero, we can safely reduce ourselves to the case where $Y$ is a \textit{rational} $\Ebb_1$-monoid (i.e., a rational H-space).

First, assume that $\Bbbk=\QQ$. In this case, $Y$ splits as a product of rational Eilenberg--MacLane spaces
\begin{align}
\label{equiv:rational-h-spaces}
Y\simeq\prod_{n\geqslant0}K(\pi_n(Y,y),n),
\end{align}
see for example \cite[Theorem 9.1.1]{May_Ponto}. Since $\pi_{\bullet}(Y,y)$ is a finitely generated rational vector space, this product has to be finite. So we can just conclude in virtue of \cref{lemma:key2}. For a general field extension $\QQ\subseteq\Bbbk$, we observe that the universal coefficient theorem implies that
\[
\cSpec(\CY)\simeq\cSpec(\mathrm{C}^{\bullet}(Y;\QQ))\times_{\Spec(\QQ)}\Spec(\Bbbk).
\]
Arguing as in the proof of \cref{cor:prop_key_finite}, we see that both functors involved send this product to a tensor products of $\QQ$-linear presentable $2$-categories, and thus $\mathrm{Loc}_{Y^{\aff}}$ can be written as a tensor product of two fully faithful functors.
\end{proof}
\begin{warning}
If $Y$ is a pointed rational $0$-Koszul H-space, then the equivalence \eqref{equiv:rational-h-spaces} is \textit{not} an equivalence of H-spaces, unless $Y$ is homotopy abelian. So, the equivalence
\[
\QCoh(\cSpec(\CY))\simeq\bigotimes_{n\geqslant0}\QCoh(\cSpec(\mathrm{C}^{\bullet}(K(\pi_nY,n);\Bbbk)))
\]
proved in \cref{lemma:key2} will not preserve the convolution monoidal structure in general.
\end{warning}
One would be tempted to deduce \cref{lemma:locfullyfaithful} from \cref{cor:prop_key_finite}, arguing by induction along the Postnikov tower
\[
Y\simeq\lim_{n\in\NN}\lp\cdots\longrightarrow\tau_{\leqslant n+1}Y\longrightarrow\tau_{\leqslant n}Y\longrightarrow\cdots\longrightarrow\left\{\ast\right\}\rp,
\]
since each $\tau_{\leqslant n}Y$ is again $0$-Koszul (\cref{prop:nkoszultruncated}). However, the functors $\QCoh(-)$ and $\ShvCat(-)$ do not interact well with limits.

Yet, when $Y$ is a rational $\Ebb_1$-monoid, it admits a curiously different description as a \textit{colimit} of its Postnikov tower.
\begin{construction}
\label{construction:h-spaces}
Let $Y$ be a connected rational $\Ebb_1$-monoid, and consider the decomposition as a product of rational Eilenberg--MacLane spaces \eqref{equiv:rational-h-spaces}. Write $Y(n)$ for the $n$-th step in the Postnikov tower of $Y$: it is clear that we have
\[
Y(n)\coloneqq \tau_{\leqslant n}Y\simeq\prod_{k=1}^nK(\pi_k(Y,y),k).
\]
For any $n\geqslant1$, let $\left\{\ast\right\}\to K(\pi_n(Y,y),n)$ be the map of spaces induced by the homomorphism of groups $0\to\pi_n(Y,y)$. This induces an obvious map $Y(n)\to Y(n+1)$ that is a section of the fibration $Y(n+1)\to Y(n)$. In particular, we have a morphism
\begin{align}
\label{map:postnikov}
\colim_{n\geqslant0}Y(n)\longrightarrow Y.
\end{align}
\end{construction}
\begin{proposition}
\label{prop:postnikov}
For any pointed connected rational $\Ebb_1$-monoid $Y$, the map \eqref{map:postnikov} is an equivalence of spaces.
\end{proposition}
\begin{proof}
Let us leave implicit all base points in the notation. It is sufficient to check that for any $p\geqslant0$, the map of abelian groups
\[
\pi_p{\lp\colim_n Y(n)\rp}\cong\pi_p{\lp\colim_n\prod_{k=1}^nK(\pi_k(Y),k)\rp}\longrightarrow\pi_p(Y)
\]
is an isomorphism. Since taking homotopy groups obviously commutes with both finite products and filtered colimits, we can write the above map as
\[
\colim_n\prod_{k=1}^n\pi_p(K(\pi_k(Y),k)\longrightarrow\pi_p(Y).
\]
Now, it is clear that $\pi_p(K(\pi_k(Y),k))$ is $0$ for all $p\neq k$, and is $\pi_p(Y)$ otherwise. In particular, the source of the above map is the colimit (computed in the abelian category of abelian groups) over the eventually constant sequence 
\[
0\longrightarrow0\longrightarrow\cdots\longrightarrow
0\longrightarrow\underset{p\text{-th term}}{\underbrace{\pi_p(Y)}}\overset{\cong}{\longrightarrow}\pi_p(Y)\overset{\cong}{\longrightarrow}\cdots
\]
which is clearly $\pi_p(Y)$ itself. So the map \eqref{map:postnikov} is an equivalence, as desired.
\end{proof}
\begin{corollary}
\label{corollary:colimit-postnikov}
Let $Y$ be a pointed rational $1$-Koszul $\Ebb_1$-monoid, and let $\Bbbk$ be a field of characteristic zero. Then we have equivalences of stacks
\[
Y_{\operatorname{B}}\simeq\colim_n Y(n)_{\operatorname{B}}\quad\text{ and }\quad \cSpec(\CY)\simeq\colim_n \cSpec(\mathrm{C}^{\bullet}(Y(n);\Bbbk)).
\]
\end{corollary}
\begin{proof}
The claim for the Betti stack of $Y$ is obvious from \cref{prop:postnikov}, because the functor $(-)_{\operatorname{B}}$ preserves colimits. For the coaffine stack, we first assume $\Bbbk=\QQ$. In this case, we observe that \cref{prop:postnikov} actually implies that $Y$ is a sequential colimit of $Y(n)$'s in the category of \textit{rational} spaces; in turn, this is equivalent to the category of \textit{rational homotopy types}, in the sense of \cite[Definition 1.2.1]{dagxiii}. When restricted to the full sub-category of rational spaces of finite type, such equivalence is precisely realized by the functor $\cSpec(\mathrm{C}^{\bullet}(-;\QQ))$ (\cite[$\S1.3$]{dagxiii}). Since rational homotopy types are closed under filtered colimits inside the category of prestacks (\cite[Lemma 1.2.16]{dagxiii}), it follows that 
\[
\cSpec(\mathrm{C}^{\bullet}(Y,\QQ))\simeq\colim_n\cSpec(\mathrm{C}^{\bullet}(Y(n),\QQ)).
\]
For a general field extension $\QQ\subseteq\Bbbk$, it is sufficient to observe that the base change along $\Spec(\Bbbk)\to\Spec(\QQ)$ preserves filtered colimits, because colimits in categories of stacks are universal.
\end{proof}
\begin{lemma}
\label{lemma:2-connected}
Let $Y$ be a pointed $2$-Koszul space which is a $\Ebb_1$-monoid. Let $\Bbbk$ be a field of characteristic zero. Then the functor
\[
\mathrm{Loc}_{Y^{\mathrm{aff}}}\colon\mathrm{Lin}_{\QCoh(\cSpec(\CY))}\PrLU\longrightarrow\ShvCat(\cSpec(\CY))
\]
is fully faithful.
\end{lemma}
\begin{proof}
Notice that for all integers $n\geqslant0$, the $n$-th step $Y_{\QQ}(n)$ in the Postnikov tower of the rationalization of $Y$ is the rationalization of $Y(n)$. Thus, as in the proof of \cref{cor:prop_key_finite}, we can assume that $Y$ is rational itself. Using \cref{corollary:colimit-postnikov}, we can write
\[
Y_{\operatorname{B}}\simeq\colim_n Y(n)_{\operatorname{B}}\quad\text{ and }\quad \cSpec(\CY)\simeq\colim_n \cSpec(\mathrm{C}^{\bullet}(Y(n);\Bbbk)).
\]
Then we have\[
\ShvCat(\cSpec(\CY))\simeq\lim_n \ShvCat(\cSpec(\mathrm{C}^{\bullet}(Y(n);\Bbbk)))
\]
and
\[
\LocSysCat(Y;\Bbbk)\simeq\lim_n \LocSysCat(Y(n);\Bbbk).\]
Indeed, over simply connected spaces the functor $\LocSysCat(-;\Bbbk)$ is rationally invariant as well, thanks to \cref{conj:infinityn}.) Since each $Y(n)$ is again $2$-Koszul (\cref{prop:nkoszultruncated}), and thus $1$-Koszul (\cref{prop:nkoszuldeloops}), \cref{cor:koszulalgebraic} applies. Moreover, each $Y(n)$ is $n$-truncated, so the based loop space $\Omega_*Y(n)\simeq Y(n-1)$ is a $(n-1)$-truncated rational $1$-Koszul $\Ebb_1$-monoid. Applying \cref{cor:prop_key_finite}, we obtain a chain of equivalences
\begin{align*}
\ShvCat(\cSpec(\CY)&)\simeq \lim_n \ShvCat(\cSpec(\mathrm{C}^{\bullet}(Y(n);\Bbbk)))\\&\simeq\lim_n\LocSysCat(Y(n);\Bbbk)\\&\simeq\LocSysCat(Y;\Bbbk),
\end{align*}
which is again induced by taking the limit of the pullback functors $\aff^\ast_{Y(n)}$. We now argue as we did at the end of the proof of \cref{lemma:key1}: the functor $\mathrm{Loc}_{Y^{\aff}}$ is equivalent to the fully faithful functor $\mathrm{Loc}_{Y_{\operatorname{B}}}$ and so it must be fully faithful as well.
\end{proof}
\begin{porism}
Notice that \cref{lemma:2-connected} already proves that \cref{thm:mainkoszuln} holds when $n=1$ and $X$ is a $\Ebb_1$-monoid. 
\end{porism}
\begin{proof}[Proof of \cref{prop:key}]
Let $Y$ be an arbitrary pointed $0$-Koszul space which is an $\Ebb_1$-monoid, and let $\Bbbk$ be a field of characteristic zero. As in the previous proofs, we can safely assume $Y$ to be a rational space, so we have a decomposition of $Y$ as a product of rational Eilenberg--Maclane spaces. Set
\[
Y(2)\coloneqq K(\pi_1(Y,y),2)\times K(\pi_2(Y,y),2)\quad\text{ and }\quad Y_{\geqslant3}\coloneqq \prod_{n\geqslant3}K(\pi_n(Y,y);n),
\]
so that $Y\simeq Y(2)\times Y_{\geqslant3}$. The functor $\mathrm{Loc}_{Y(2)^{\aff}}$ is fully faithful because of \cref{lemma:key2}, while the functor $\mathrm{Loc}_{Y_{\geqslant3}^{\aff}}$ is fully faithful because of \cref{lemma:2-connected}. Using the fact that $Y_{\geqslant3}$ is $2$-Koszul, and thus that \cref{cor:koszulalgebraic} applies, we deduce that its category of quasi-coherent sheaves is dualizable. So, arguing as in the proof of \cref{lemma:key2}, we obtain that
\[
\QCoh(\cSpec(\CY))\simeq\QCoh(\cSpec(\mathrm{C}^{\bullet}(Y(2);\Bbbk))\otimes_{\Mod_{\Bbbk}}\QCoh(\cSpec(\mathrm{C}^{\bullet}(Y_{\geqslant3};\Bbbk))). 
\]
Therefore, once again $\mathrm{Loc}_{Y^{\aff}}$ is realized as the tensor product of two fully faithful functors, hence it is fully faithful as well.
\end{proof}
\subsection{Consequences of \cref{thm:mainkoszuln}}
In the last section of this paper we provide some interesting consequences of \cref{thm:mainkoszuln}.
\begin{corollary}
\label{cor:1affinenessga}
Let $\Bbbk$ be a field of characteristic zero. For any $n\geqslant1$ and $k\geqslant 1$, the coaffine stack $\mathbf{B}^{n+2k+1}\Ga$ is not $n$-affine, while the Betti stack $\mathbf{B}^{n+2k}\Ga$ is $n$-affine.
\end{corollary}
\begin{proof}
Combine \cref{cor:BCPinfty} and \cref{thm:mainkoszuln}, using the fact that for any integer $p\geqslant1$ the stacks $\mathbf{B}^p\Ga$ and $\cSpec(\mathrm{C}^{\bullet}(\mathbf{B}^{p}\ZZ;\Bbbk))$ are equivalent.
\end{proof}
\begin{remark}
\cref{cor:1affinenessga} allows us to interpret \cite[Theorem 2.5.7]{1affineness} from a different perspective. Indeed, there it is shown that $\mathbf{B}\Ga$, $\mathbf{B}^2\Ga$ and $\mathbf{B}^3\Ga$ are all $1$-affine, while $\mathbf{B}^4\Ga$ is not: not only it is implied that the failure of $1$-affineness of $\mathbf{B}^4\Ga$ is somewhat surprising, but the algebraic techniques are not sufficient to study the problem for further deloopings. Under the lens of Koszul duality, this negative result acquires a new meaning. Even deloopings of $\Ga$ are \textit{almost never} $1$-affine, and $\mathbf{B}^2\Ga$ is $1$-affine only because it is not connected enough for Koszul duality to apply to this case as well. 
\end{remark}
\begin{corollary}
Let $n> m\geqslant 0$ be integers, and let $\Bbbk$ be a field of characteristic zero. For $X$ a pointed $(n+1)$-Koszul space, the fiber at the base point of the affinization map $$\aff_X\colon X_{\operatorname{B}}\longrightarrow\cSpec(\CX)$$ is equivalent to the quotient stack $[\cSpec(\COX)/\Omega_*X_{\operatorname{B}}]$, and we have an equivalence of $(m+1)$-categories
\[
(m+1)\ShvCattwo^{m}([\cSpec(\COX)/\Omega_*X_{\operatorname{B}}])\simeq (m+1)\LinkPrLUtwo[m].
\]
In particular, if $n>m+1\geqslant2$, then $\aff_X$ is an $m$-affine morphism in the sense of \cite[Definition 14.3.7]{stefanichthesis}.
\end{corollary}
\begin{proof}
Let $\eta\colon \left\{\ast\right\}\to X$ be the base point. Since $X$ is sufficiently connected, we can interpret $\aff_X$ to be the delooping of the affinization map
\[
\aff_{\Omega_*X}\colon\Omega_*X_{\operatorname{B}}\longrightarrow\Omega_*\cSpec(\CX)\simeq\cSpec(\COX).
\]
In other words: $\aff_X\simeq\mathbf{B}(\aff_{\Omega_*X})$. Since both $\Omega_*X_{\operatorname{B}}$ and $\cSpec(\COX)$ are group stacks and the affinization map is compatible with the group structure, we deduce that
\[
\fib(\aff_X)\simeq [\cSpec(\COX)/\Omega_*X_{\operatorname{B}}].
\]
The right hand side is the quotient stack of $\cSpec(\COX)$ under the action of $\Omega_*X_{\operatorname{B}}$ via the map of group stacks $\aff_{\Omega_*X}$. In particular, we have
\[
\fib(\aff_X)\simeq\colim_{\bDelta^{\op}}\lp\cdots\stack{5}\Omega_*X_{\operatorname{B}}\times\cSpec(\COX)\stack{3}\cSpec(\COX)\rp.
\]
Therefore, for all integers $m < n$ we have
\[
(m+1)\ShvCattwo^{m}(\fib(\aff_X))\simeq\lim_{[k]\in\bDelta}(m+1)\ShvCattwo^{m}\lp\Omega_*X_{\operatorname{B}}^{\times k}\times\cSpec(\COX)\rp.
\]
If $n=1$, then $m=0$ and
\[
1\ShvCattwo^0(\Omega_*X_{\operatorname{B}})\coloneqq\LocSys(\Omega_*X;\Bbbk)
\]is dualizable, therefore the categorical K\"unneth formula holds; for $n\geqslant2$ and $m\geqslant1$, it holds without any assumption of dualizability. In any case, we can write
\[
(m+1)\ShvCattwo^{m}(\fib(\aff_X))\simeq \lim_{k\in\bDelta}(m+1)\LocSysCattwo^{m}(\Omega_*X;\Bbbk)^{\otimes k}\otimes (m+1)\ShvCattwo^{m}(\cSpec(\COX)).
\]
Since $\Omega_*X$ is $m$-Koszul, \cref{thm:mainkoszuln} applies. In particular, the right hand side is the limit over the split coaugmented cosimplicial diagram 
\[
(m+1)\LinkPrLUtwo[m]\longrightarrow (m+1)\LocSysCattwo^{m}(\Omega_*X;\Bbbk)\stack{3}(m+1)\LocSysCattwo^{m}(\Omega_*X\times\Omega_*X;\Bbbk)\stack{5}\cdots
\]
which is $(m+1)\LinkPrLUtwo[m]$ itself. Since this argument works for any $m<n$, when $m<n-1$ it is clear that the adjunction $\mathrm{Loc}^m_{\fib(\aff_X)}\dashv \Gamma^{\mathrm{enh}}(\fib(\aff_X),-)$ corresponds to the autoequivalence
\[
(m+1)\Modtwo_{m\LinkPrLUtwotiny[m-1]}{\lp(m+1)\PrLUtwo[m]\rp}\simeq(m+1)\LinkPrLUtwo[m].\]
The fact that $\aff_X$ is an $m$-affine morphism follows from the above computation using the fact that the stack $\cSpec(\CX)$ is connected, so any $R$-rational point factors through $\eta$.
\end{proof}

\printbibliography
\end{document}